\documentclass[10pt,oneside,a4paper]{article}
\usepackage[english]{babel}
\usepackage{amssymb}
\usepackage{amsmath}
\usepackage{amsthm}
\usepackage[mathscr]{eucal}
\usepackage{graphicx}
\usepackage{lmodern}
\usepackage[T1]{fontenc}
\usepackage{setspace}
\usepackage[latin1]{inputenc}
\usepackage{enumerate}
\usepackage{array}
\usepackage{bbm}
\usepackage{color}
\usepackage{sectsty}
    \sectionfont{\sc\Large}
\usepackage{titlesec}
\usepackage{indentfirst}
\titlelabel{\large{\thetitle}.\quad}
\newtheorem{theorem}{Theorem}[section]
\newtheorem{proposition}[theorem]{Proposition}
\newtheorem{lemma}[theorem]{Lemma}
\newtheorem{corollary}[theorem]{Corollary}
\newtheorem{definition}[theorem]{Definition}
\newtheorem{rmk}[theorem]{Remark}

\setlength{\oddsidemargin}{1.45cm}
\setlength{\textwidth}{13cm}

\allowdisplaybreaks

\title{\textbf{Stochastic differential games involving impulse controls and double-obstacle quasi-variational inequalities}
\footnotetext{$^{*}$Tel.: +39-02-2399-4508; fax: +39-02-2399-4513.}
\footnotetext{\,\,\emph{E-mail address:} andrea.cosso@mail.polimi.it}}
\author{Andrea Cosso$^{*}$\\
\small Dipartimento di Matematica\\
\small Politecnico di Milano\\
\small piazza Leonardo da Vinci 32\\
\small 20133 Milano, Italy}
\date{}

\begin{document}
\maketitle

\begin{abstract}
We study a two-player zero-sum stochastic differential game with both players adopting impulse controls, on a finite time horizon. The Hamilton-Jacobi-Bellman-Isaacs (HJBI) partial differential equation of the game turns out to be a double-obstacle quasi-variational inequality, therefore the two obstacles are implicitly given. We prove that the upper and lower value functions coincide, indeed we show, by means of the dynamic programming principle for the stochastic differential game, that they are the unique viscosity solution to the HJBI equation, therefore proving that the game admits a value.\\
\\
\noindent\emph{Keywords:} Stochastic differential game, Impulse control, Quasi-variational inequality, Viscosity solution.
\end{abstract}

\section{Introduction}
The theory of two-player zero-sum differential games was pioneered by Isaacs \cite{I65}. Thanks to the mathematically rigorous definitions of upper and lower value functions for a differential game, see Elliott and Kalton~\cite{EK72}, Evans and Souganidis \cite{ES84} began to study differential games by means of the viscosity theory, characterizing the two value functions as the unique viscosity solutions to the corresponding Hamilton-Jacobi-Bellman-Isaacs partial differential equations (HJBI PDEs, for short).

Inspired by the results achieved in the deterministic case, Fleming and Souganidis~\cite{FS89}, for the first time, studied two-player zero-sum stochastic differential games using the viscosity theory. Adopting concepts presented in \cite{EK72}, they proved the dynamic programming principle for the stochastic differential game and showed that the upper and lower value functions are the unique viscosity solutions to the second order HJBI partial differential equations, which coincide under the Isaacs condition. \cite{FS89} is now considered a reference work in the field of stochastic differential games. More recently, the theory of backward stochastic differential equations has been successfully applied to the study of stochastic differential games, firstly by Hamad\`ene and Lepeltier \cite{HL95} and Hamad\`ene et al. \cite{HLP97}. Subsequently, Buckdahn and Li \cite{BL08} extended the findings presented in \cite{HL95}, \cite{HLP97}, and generalized the framework introduced in~\cite{FS89}.

The references mentioned above are focused on stochastic differential games with continuous controls. There exists also a large literature regarding Dynkin games, which are generalization of optimal stopping problems. Besides, the case with switching controls has been recently addressed by Tang and Hou~\cite{TH07}. In the present paper we study a two-player zero-sum stochastic differential game with both players adopting impulse controls, on a finite time horizon. In particular, we prove, using the dynamic programming principle, that the upper and lower value functions are the unique viscosity solution to the corresponding HJBI equation. To the author's knowledge, this kind of stochastic differential game has not yet been analyzed by means of the viscosity theory. As a matter of fact, only Zhang~\cite{Z11} studied, in the viscosity sense, a stochastic differential game involving impulse controls, but in \cite{Z11} one player adopts impulse controls, while the second player uses continuous controls.

Impulse control is a relevant topic in the field of stochastic control and a general reference is Bensoussan and Lions \cite{BL82}, where the focus is mainly on functional analysis methods. Instead, direct probabilistic methods are exploited in, for instance, Robin~\cite{R78} and Stettner \cite{S82}. Finally,  concerning the study of impulse control problems through the viscosity theory see, for example, Lenhart \cite{L89}, Tang and Yong \cite{TY93} and Kharroubi et al.~\cite{KMPZ10}.

During the past few years, the increasing demand for more realistic models in mathematical finance led to a renewed interest in impulse control. Indeed, impulse control may be particularly useful when dealing with, for instance, transaction costs and liquidity risk in financial markets. For more information on this subject see, for example, Korn~\cite{K99}, Ly Vath et al.~\cite{LMP07} and Bruder and Pham~\cite{BP09}.

We give an outline of the problem. Let $T>0$ be the finite time horizon of the game, $t\in[0,T]$ the initial time and $x\in\mathbb{R}^{n}$ the initial state. Then, the evolution of the state of the game is described by the following stochastic equation:
\begin{align*}
X_{s}=x+\int_{t}^{s}b(X_{r})\textup{d}r+\int_{t}^{s}\sigma(X_{r})\textup{d}W_{r}&
+\sum_{m\geqslant1}\xi_{m}\mathbbm{1}_{[\tau_{m},T]}(s)\prod_{\ell\geqslant1}\mathbbm{1}_{\{\tau_{m}\neq\rho_{\ell}\}}+ \\
&+\sum_{\ell\geqslant1}\eta_{\ell}\mathbbm{1}_{[\rho_{\ell},T]}(s),
\end{align*}
for all $s\in[t,T]$, $\mathbb{P}$-a.s., with $X_{t^{-}}=x$. Here $W$ is a $d$-dimensional Wiener process, while
\[
u(s)=\sum_{m\geqslant1}\xi_{m}\mathbbm{1}_{[\tau_{m},T]}(s) \qquad \text{ and } \qquad v(s)=\sum_{\ell\geqslant1}\eta_{\ell}\mathbbm{1}_{[\rho_{\ell},T]}(s)
\]
are the impulse controls of player I and player II, respectively. The random variables $\xi_{m}$ and $\eta_{\ell}$ take values in two convex cones $\mathscr{U}$ and $\mathscr{V}$ of $\mathbb{R}^{n}$, respectively, called the spaces of control actions. The infinite product $\prod_{\ell\geqslant1}\mathbbm{1}_{\{\tau_{m}\neq\rho_{\ell}\}}$ has the following meaning: When the two players act together on the system at the same time, then we take into account only the action of player~II. We denote by $X^{t,x;u,v}=\{X_{s}^{t,x;u,v},$ $t\leqslant s\leqslant T\}$ the state trajectory of the game with initial time $t$, initial state $x$ and impulse controls $u$ and $v$.

The gain functional for player I (resp., cost functional for player II) of the stochastic differential game is given by:
\begin{align*}
J(t,x;u,v)=\mathbb{E}\Big[\int_{t}^{T}f(X_{s}^{t,x;u,v})\textup{d}s&
-\sum_{m\geqslant1}c(\tau_{m},\xi_{m})\mathbbm{1}_{\{\tau_{m}\leqslant T\}}\prod_{\ell\geqslant1}\mathbbm{1}_{\{\tau_{m}\neq\rho_{\ell}\}}+ \\
&+\sum_{\ell\geqslant1}\chi(\rho_{\ell},\eta_{\ell})\mathbbm{1}_{\{\rho_{\ell}\leqslant T\}}+g(X_{T}^{t,x;u,v})\Big].
\end{align*}
Whenever player I performs an action he/she pays a positive cost $c$, which results in a gain for player II. Analogously, $\chi$ is the positive cost paid by player II to perform an impulse, which is a gain for player I. It is reasonable to impose the following condition on the cost function $\chi$:
\[
\chi(t,z_{1}+z_{2})\leqslant\chi(t,z_{1})+\chi(t,z_{2})-h(t),
\]
for every $t\in[0,T]$ and $z_{1},z_{2}\in\mathscr{V}$. The presence of a strictly positive function $h$ is required in the uniqueness proof for the HJBI equation. To guarantee uniqueness it is not enough to require the same condition on the other cost function $c$. We need to impose a stronger constraint that involves both cost functions, which is given by:
\[
c(t,y_{1}+z+y_{2})\leqslant c(t,y_{1})-\chi(t,z)+c(t,y_{2})-h(t),
\]
for every $y_{1},y_{2}\in\mathscr{U}$ and $z\in\mathscr{V}$. As a consequence, we have to require $\mathscr{V}\subset\mathscr{U}$. Finally, to prove the regularity with respect to time of the upper and lower value functions, we make the following assumption, introduced by Yong \cite{Y94} (see also Tang and Yong~\cite{TY93}), on the cost functions:
\[
c(t,y)\geqslant c(\hat{t},y) \qquad \text{ and } \qquad \chi(t,z)\geqslant\chi(\hat{t},z),
\]
for every $0\leqslant t\leqslant\hat{t}\leqslant T$, $y\in\mathscr{U}$ and $z\in\mathscr{V}$.

We define the upper and lower value functions using the Elliott-Kalton strategies as in \cite{FS89}. We show that they are the unique viscosity solution of a HJBI partial differential equation, which results to be the same for the two value functions. This is a consequence of the assumption that the two players can not act simultaneously on the system. Therefore the uniqueness for the HJBI equation implies that the upper and lower value functions coincide and the game admits a value.

The HJBI equation turns out to be a double-obstacle quasi-variational inequality, therefore the two obstacles are implicitly given and depend on the solution. This is a natural generalization of the result obtained in the context of Dynkin games, in which the HJBI equation is given by a double-obstacle variational inequality, see, for instance, Cvitani\'c and Karatzas \cite{CK96} and Hamad\`ene and Hassani \cite{HH05}. Single-obstacle quasi-variational inequality has been considered recently by Kharroubi et al. \cite{KMPZ10}, by means of backward stochastic differential equations with constrained jumps. In the following paper, instead, we use the dynamic programming principle for the stochastic differential game to prove that the two value functions are viscosity solutions to the HJBI equation.

The paper is organized as follows: Section~\ref{S:Problem} is devoted to fix the notations and to introduce rigorously the stochastic differential game. In Section~\ref{S:Regularity} we study the regularity properties of the upper and lower value functions. In particular, we show that they are continuous on $[0,T)\times\mathbb{R}^{n}$ and bounded. In Section~\ref{S:DPP} we prove the dynamic programming principle for the stochastic differential game. Furthermore, we deduce some corollaries and generalizations, which turn out to be useful to prove that the two value functions are viscosity solutions to the HJBI equation. This latter is the subject of Section~\ref{S:HJBI}, where we give also the definition of viscosity solution for the HJBI equation via test functions and by means of jets,
needed later in the proof of the uniqueness. Finally, in Section~\ref{S:Uniqueness} we prove the Comparison Theorem for the HJBI equation, therefore proving that the game admits a value.

\section{The Stochastic Differential Game}
\label{S:Problem}
In the present section we introduce the model associated to a two-player zero-sum stochastic differential game (SDG, for short) involving impulse controls with finite horizon.

Consider a complete probability space $(\Omega,\mathcal{F},\mathbb{P})$ and a $d$-dimensional standard Wiener process $W=(W_{t})_{t\geqslant0}$ defined on it. Let $\mathbb{F}=(\mathcal{F}_{t})_{t\geqslant0}$ be the natural filtration generated by the Wiener process, completed with the $\mathbb{P}$-null sets of $\mathcal{F}$. We are given two convex cones $\mathscr{U}$ and $\mathscr{V}$ of $\mathbb{R}^{n}$, with $\mathscr{V}\subset\mathscr{U}$. We call $\mathscr{U}$ and $\mathscr{V}$ the \emph{spaces of control actions}. We begin by introducing the concept of \emph{impulse control}.
\begin{definition}
An \textup{impulse control} $u=\sum_{m\geqslant1}\xi_{m}\mathbbm{1}_{[\tau_{m},T]}$ for player \textup{I} \textup{(}resp., $v=\sum_{\ell\geqslant1}\eta_{\ell}\mathbbm{1}_{[\rho_{\ell},T]}$ for player \textup{II}\textup{)} on $[t,T]\subset\mathbb{R}^{+}=[0,+\infty)$, is such that:
\begin{enumerate}[\upshape (i)]
  \item $(\tau_{m})_{m}$ \textup{(}resp., $(\rho_{\ell})_{\ell}$\textup{)}, the \textup{action times}, is a nondecreasing sequence of $\mathbb{F}$-stopping
        times, valued in $[t,T]\cup\{+\infty\}$.
  \item $(\xi_{m})_{m}$ \textup{(}resp., $(\eta_{\ell})_{\ell}$\textup{)}, the \textup{actions}, is a sequence of $\mathscr{U}$-valued \textup{(}resp., $\mathscr{V}$-valued\textup{)}
        random variables, where each $\xi_{m}$ \textup{(}resp., $\eta_{\ell}$\textup{)} is $\mathcal{F}_{\tau_{m}}$-measurable \textup{(}resp., $\mathcal{F}_{\rho_{\ell}}$-measurable\textup{)}.
\end{enumerate}
\end{definition}

We denote by $T>0$ the horizon of the stochastic differential game. Let $t\in[0,T]$ be the initial time of the game and $x\in\mathbb{R}^{n}$ the initial state. Then, given the impulse controls $u$ and $v$ on $[t,T]$, the state process of the stochastic differential game is defined as the solution to the following stochastic equation:
\begin{align}
\label{E:X}
X_{s}=x+\int_{t}^{s}b(X_{r})\textup{d}r+\int_{t}^{s}\sigma(X_{r})\textup{d}W_{r}&+\sum_{m\geqslant1}\xi_{m}\mathbbm{1}_{[\tau_{m},T]}(s)\prod_{\ell\geqslant1}\mathbbm{1}_{\{\tau_{m}\neq\rho_{\ell}\}}+ \notag \\
&+\sum_{\ell\geqslant1}\eta_{\ell}\mathbbm{1}_{[\rho_{\ell},T]}(s),
\end{align}
for all $s\in[t,T]$, $\mathbb{P}$-a.s., with $X_{t^{-}}=x$.
\begin{rmk}
\textup{As mentioned before, the infinite product $\prod_{\ell\geqslant1}\mathbbm{1}_{\{\tau_{m}\neq\rho_{\ell}\}}$ in equation (\ref{E:X}) means that when the two players act together on the system, then we take into account only the action of player \textup{II}.}
\end{rmk}

We make the following assumption on the functions $b$ and $\sigma$:
\begin{enumerate}
  \item[\textbf{(H$_{b,\sigma}$)}] The functions $b\colon\mathbb{R}^{n}\rightarrow\mathbb{R}^{n}$ and $\sigma\colon\mathbb{R}^{n}\rightarrow\mathbb{R}^{n\times d}$ are Lipschitz continuous and bounded.
\end{enumerate}

Thanks to assumption \textup{\textbf{(H$_{b,\sigma}$)}}, there exists a unique solution $X^{t,x;u,v}=\{X_{s}^{t,x;u,v},$ $t\leqslant s\leqslant T\}$ to equation (\ref{E:X}), for every $(t,x)\in[0,T]\times\mathbb{R}^{n}$, $u$ and $v$ impulse controls on $[t,T]$.
\begin{definition}
\label{D:Restriction}
Let $u=\sum_{m\geqslant1}\xi_{m}\mathbbm{1}_{[\tau_{m},T]}$ be an impulse control on $[t,T]$ and $\tau\leqslant\sigma$ two $[t,T]$-valued $\mathbb{F}$-stopping times. Then we define the \textup{restriction} $u_{[\tau,\sigma]}$ of the impulse control $u$ by:
\[
u_{[\tau,\sigma]}(s)=\sum_{m\geqslant1}\xi_{\mu_{t,\tau}(u)+m}\mathbbm{1}_{\{\tau_{\mu_{t,\tau}(u)+m}\leqslant s\leqslant\sigma\}}(s), \qquad \tau\leqslant s\leqslant\sigma,
\]
where $\mu_{t,\tau}(u)$ is the number of impulses up to time $\tau$, i.e.,
\begin{equation}
\label{E:mu}
\mu_{t,\tau}(u):=\sum_{m\geqslant1}\mathbbm{1}_{\{\tau_{m}\leqslant\tau\}}.
\end{equation}
\end{definition}

For every $(t,x)\in[0,T]\times\mathbb{R}^{n}$, $u$ and $v$ impulse controls on $[t,T]$, we define the \emph{gain functional} for player I (resp., cost functional for player II) as follows:
\begin{align}
\label{E:J}
J(t,x;u,v):=\mathbb{E}\Big[\int_{t}^{T}f(X_{s}^{t,x;u,v})\textup{d}s&
-\sum_{m\geqslant1}c(\tau_{m},\xi_{m})\mathbbm{1}_{\{\tau_{m}\leqslant T\}}\prod_{\ell\geqslant1}\mathbbm{1}_{\{\tau_{m}\neq\rho_{\ell}\}}+ \notag \\
&+\sum_{\ell\geqslant1}\chi(\rho_{\ell},\eta_{\ell})\mathbbm{1}_{\{\rho_{\ell}\leqslant T\}}+g(X_{T}^{t,x;u,v})\Big].
\end{align}
\begin{rmk}
\textup{As mentioned in the introduction, the function $c$ in (\ref{E:J}) is the cost function for player \textup{I} and is a gain function for player \textup{II}, meaning that when player \textup{I} performs an action has to pay a cost, resulting in a gain for player \textup{II}. Analogously, $\chi$ is the cost function for player \textup{II} and is a gain function for player \textup{I}.}
\end{rmk}

To guarantee a well defined gain functional we make the following assumptions on the functions $f$, $g$, $c$, $\chi$ and we introduce the concept of \emph{admissible impulse control}.
\begin{enumerate}
    \item[\textbf{(H$_{f,g}$)}] The \emph{running gain} $f\colon\mathbb{R}^{n}\rightarrow\mathbb{R}$ and the \emph{payoff} $g\colon\mathbb{R}^{n}\rightarrow\mathbb{R}$ are Lipschitz and bounded.
    \item[\textbf{(H$_{c,\chi}$})] The \emph{cost functions} $c\colon[0,T]\times\mathscr{U}\rightarrow\mathbb{R}$ and $\chi\colon[0,T]\times\mathscr{V}\rightarrow\mathbb{R}$ are $1/2$-H\"older continuous in time, uniformly with respect to the other variable. Furthermore
        \[
        \inf_{[0,T]\times\mathscr{U}}c>0, \qquad\qquad \inf_{[0,T]\times\mathscr{V}}\chi>0
        \]
        and there exists a function $h\colon[0,T]\rightarrow(0,\infty)$ such that for all $t\in[0,T]$
        \begin{equation}
        \label{E:c}
        c(t,y_{1}+z+y_{2})\leqslant c(t,y_{1})-\chi(t,z)+c(t,y_{2})-h(t)
        \end{equation}
        and
        \begin{equation}
        \label{E:chi}
        \chi(t,z_{1}+z_{2})\leqslant\chi(t,z_{1})+\chi(t,z_{2})-h(t),
        \end{equation}
        for every $y_{1},y_{2}\in\mathscr{U}$ and $z,z_{1},z_{2}\in\mathscr{V}$. Moreover
        \begin{equation}
        \label{E:c_chi_time}
        c(t,y)\geqslant c(\hat{t},y) \qquad \text{ and } \qquad \chi(t,z)\geqslant\chi(\hat{t},z),
        \end{equation}
        for every $t,\hat{t}\in[0,T]$, with $t\leqslant\hat{t}$, $y\in\mathscr{U}$ and $z\in\mathscr{V}$.
\end{enumerate}
\begin{definition}
An \textup{admissible impulse control} $u$ for player \textup{I} \textup{(}resp., $v$ for player \textup{II}\textup{)} on $[t,T]\subset\mathbb{R}^{+}$, is an impulse control for player \textup{I} \textup{(}resp., \textup{II}\textup{)} on $[t,T]$ with a finite average number of impulses, i.e.,
\[
\mathbb{E}[\mu_{t,T}(u)]<\infty \qquad \qquad \big(\text{resp., }\mathbb{E}[\mu_{t,T}(v)]<\infty\big),
\]
where $\mu_{t,T}(u)$ is given by equation (\ref{E:mu}). The set of all admissible impulse controls for player \textup{I} \textup{(}resp., \textup{II}\textup{)} on $[t,T]$ is denoted by $\mathcal{U}_{t,T}$ \textup{(}resp., $\mathcal{V}_{t,T}$\textup{)}. We identify two impulse controls $u=\sum_{m\geqslant1}\xi_{m}\mathbbm{1}_{[\tau_{m},T]}$ and $\bar{u}=\sum_{m\geqslant1}\bar{\xi}_{m}\mathbbm{1}_{[\bar{\tau}_{m},T]}$ in $\mathcal{U}_{t,T}$, and we write $u\equiv\bar{u}$ on $[t,T]$, if $\mathbb{P}(\{u=\bar{u}\text{ a.e. }on\text{ }[t,T]\})=1$. Similarly we interpret $v\equiv\bar{v}$ on $[t,T]$ in $\mathcal{V}_{t,T}$.
\end{definition}
\begin{rmk}
\textup{Under assumptions \textup{\textbf{(H$_{b,\sigma}$)}}, \textup{\textbf{(H$_{f,g}$)}} and \textup{\textbf{(H$_{c,\chi}$)}}, the gain functional $J(t,x;u,v)$, given by equation (\ref{E:J}), is well defined, for every $(t,x)\in[0,T]\times\mathbb{R}^{n}$, $u\in\mathcal{U}_{t,T}$ and $v\in\mathcal{V}_{t,T}$.}
\end{rmk}

We may now define the \emph{lower value function} $V^{-}$ and the \emph{upper value function} $V^{+}$ of the stochastic differential game. Before we need to introduce the concept of \emph{nonanticipative strategy}.
\begin{definition}
\label{D:Strategy}
A \textup{nonanticipative strategy} for player \textup{I} on $[t,T]\subset\mathbb{R}^{+}$ is a mapping
\[
\alpha\colon\mathcal{V}_{t,T}\longrightarrow\mathcal{U}_{t,T}
\]
such that for any stopping time $\tau\colon\Omega\longrightarrow[t,T]$ and any $v_{1}$, $v_{2}\in\mathcal{V}_{t,T}$, with $v_{1}\equiv v_{2}$ on $[\![t,\tau]\!]$, it holds that $\alpha(v_{1})\equiv\alpha(v_{2})$ on $[\![t,\tau]\!]$. Nonanticipative strategies for player \textup{II} on $[t,T]$, denoted by
\[
\beta\colon\mathcal{U}_{t,T}\longrightarrow\mathcal{V}_{t,T},
\]
are defined similarly. The set of all nonanticipative strategies $\alpha$ \textup{(}resp., $\beta$\textup{)} for player \textup{I} \textup{(}resp., \textup{II}\textup{)} on $[t,T]$ is denoted by $\mathcal{A}_{t,T}$ \textup{(}resp., $\mathcal{B}_{t,T}$\textup{)}.
\end{definition}

Hence, the two value functions are given by:
\begin{equation}
\label{E:V-}
V^{-}(t,x):=\inf_{\beta\in\mathcal{B}_{t,T}}\sup_{u\in\mathcal{U}_{t,T}}J(t,x;u,\beta(u))
\end{equation}
and
\begin{equation}
\label{E:V+}
V^{+}(t,x):=\sup_{\alpha\in\mathcal{A}_{t,T}}\inf_{v\in\mathcal{V}_{t,T}}J(t,x;\alpha(v),v)
\end{equation}
for every $(t,x)\in[0,T]\times\mathbb{R}^{n}$. If $V^{-}=V^{+}$ we say that the game admits a value and $V:=V^{-}=V^{+}$ is called the \emph{value function} of the game.

The \emph{Hamilton-Jacobi-Bellman-Isaacs equation} associated to the stochastic differential game, which turns out to be the same for the two value functions, because of the two players can not act simultaneously on the system, is given by:
\begin{equation}
\label{E:HJBI}
\begin{cases}
\vspace{.2cm}
\max\Big\{\min\Big[-\dfrac{\partial V}{\partial t}-\mathcal{L}V-f,V-\mathcal{H}_{\sup}^{c}V\Big],V-\mathcal{H}_{\inf}^{\chi}V\Big\}=0, \qquad &[0,T)\times\mathbb{R}^{n}, \\
V(T,x)=g(x), &\forall\,x\in\mathbb{R}^{n},
\end{cases}
\end{equation}
where $\mathcal{L}$ is the second-order local operator
\[
\mathcal{L}V=\langle b,\nabla_{x}V\rangle+\tfrac{1}{2}\textup{tr}\big[\sigma\sigma'\nabla_{x}^{2}V\big]
\]
and the nonlocal operators $\mathcal{H}_{\sup}^{c}$ and $\mathcal{H}_{\inf}^{\chi}$ are given by
\[
\mathcal{H}_{\sup}^{c}V(t,x)=\sup_{y\in\mathscr{U}}\big[V(t,x+y)-c(t,y)\big], \qquad
\mathcal{H}_{\inf}^{\chi}V(t,x)=\inf_{z\in\mathscr{V}}\big[V(t,x+z)+\chi(t,z)\big],
\]
for every $(t,x)\in[0,T]\times\mathbb{R}^{n}$.

\section{Regularity of the Value Functions}
\label{S:Regularity}
We prove that both the lower value function and the upper value function are Lipschitz with respect to the state variable, uniformly in time. Furthermore, they are H\"older continuous with exponent $1/2$ with respect to time on $[0,T)$, uniformly in the state variable. Finally we show that the two value functions are bounded.

We begin by proving the following lemma, which is concerned with the continuous dependence of $X^{t,x;u,v}$ with respect to $x$.

\begin{lemma}
\label{L:Cont-x}
Under assumption \textup{\textbf{(H$_{b,\sigma}$)}} there exists a constant $C>0$ such that for every $t\in[0,T]$, $x,\hat{x}\in\mathbb{R}^{n}$, $u\in\mathcal{U}_{t,T}$ and $v\in\mathcal{V}_{t,T}$ we have:
\begin{equation}
\mathbb{E}\big[|X_{s}^{t,x;u,v}-X_{s}^{t,\hat{x};u,v}|\big]\leqslant C|x-\hat{x}|, \notag
\end{equation}
for all $s\in[t,T]$.
\end{lemma}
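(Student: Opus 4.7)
The plan is to observe that, although the controlled dynamics contain impulse terms, the impulses $\xi_m \mathbbm{1}_{[\tau_m,T]}$ and $\eta_\ell \mathbbm{1}_{[\rho_\ell,T]}$ do not depend on the initial state $x$; the stopping times $\tau_m, \rho_\ell$ and the jump sizes $\xi_m, \eta_\ell$ are fixed by the admissible controls $u \in \mathcal{U}_{t,T}$ and $v \in \mathcal{V}_{t,T}$, and so is the indicator $\prod_{\ell\geqslant 1}\mathbbm{1}_{\{\tau_m\neq\rho_\ell\}}$. Hence, subtracting equation (\ref{E:X}) for $X^{t,x;u,v}$ and $X^{t,\hat x;u,v}$, the impulse terms cancel identically and the difference process $Y_s := X_s^{t,x;u,v} - X_s^{t,\hat x;u,v}$ satisfies the purely diffusive equation
\[
Y_s = (x-\hat x) + \int_t^s \bigl[b(X_r^{t,x;u,v})-b(X_r^{t,\hat x;u,v})\bigr]\,\textup{d}r + \int_t^s \bigl[\sigma(X_r^{t,x;u,v})-\sigma(X_r^{t,\hat x;u,v})\bigr]\,\textup{d}W_r.
\]

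Next I would estimate $\Delta_s := \mathbb{E}\bigl[\sup_{r\in[t,s]}|Y_r|^2\bigr]$ by the standard moment bound for SDEs. Using the elementary inequality $(a+b+c)^2 \leqslant 3(a^2+b^2+c^2)$, Cauchy-Schwarz on the drift term, and the Burkholder-Davis-Gundy inequality on the stochastic integral, together with the Lipschitz property of $b$ and $\sigma$ from \textbf{(H$_{b,\sigma}$)}, one obtains
\[
\Delta_s \leqslant 3|x-\hat x|^2 + K\int_t^s \Delta_r\,\textup{d}r,
\]
for a constant $K$ depending only on $T$ and the Lipschitz constants. Gronwall's lemma then yields $\Delta_s \leqslant 3 e^{K T}|x-\hat x|^2$, and Jensen's inequality gives the desired $L^1$ bound
\[
\mathbb{E}\bigl[|X_s^{t,x;u,v}-X_s^{t,\hat x;u,v}|\bigr] \leqslant \sqrt{\Delta_s} \leqslant C|x-\hat x|.
\]

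There is essentially no serious obstacle here: the main conceptual point is the cancellation of the impulse terms, which makes the proof reduce to a textbook SDE stability estimate. The only minor care required is to justify that $\sup_r |Y_r|^2$ is integrable so that BDG applies and Gronwall may be used; this follows from the boundedness of $b$ and $\sigma$ assumed in \textbf{(H$_{b,\sigma}$)}, which provides a priori second-moment bounds on $X^{t,x;u,v}$ and hence on $Y$ through a standard localisation argument.
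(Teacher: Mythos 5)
Your proposal is correct and follows essentially the same route as the paper: the impulse terms cancel in the difference process, and the conclusion follows from a standard $L^{2}$ stability estimate using the Lipschitz continuity of $b$ and $\sigma$, Gronwall's lemma, and Jensen's inequality. The only cosmetic difference is that the paper applies It\^o's formula to $|X_{s}-\hat{X}_{s}|^{2}$ and takes expectations (so the martingale term vanishes and no Burkholder--Davis--Gundy inequality is needed), whereas you estimate the running supremum via BDG, which proves a slightly stronger statement than required.
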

\begin{proof}
We denote by $X:=X^{t,x;u,v}$ and $\hat{X}:=X^{t,\hat{x};u,v}$. Then an application of It\^o's formula gives
\begin{align*}
|X_{s}-\hat{X}_{s}|^{2}&=|x-\hat{x}|^{2}+2\int_{t}^{s}\langle X_{r}-\hat{X}_{r},b(X_{r})-b(\hat{X}_{r})\rangle\text{d}r+ \\
&+2\int_{t}^{s}\langle X_{r}-\hat{X}_{r},(\sigma(X_{r})-\sigma(\hat{X}_{r}))\text{d}W_{r}\rangle+\int_{t}^{s}|\sigma(X_{r})-\sigma(\hat{X}_{r})|^{2}\text{d}r.
\end{align*}
Therefore, thanks to assumption \textbf{(H$_{b,\sigma}$)}, there exists a constant $L>0$ such that
\[
\mathbb{E}\big[|X_{s}-\hat{X}_{s}|^{2}\big]\leqslant|x-\hat{x}|^{2}+L\int_{t}^{s}\mathbb{E}\big[|X_{r}-\hat{X}_{r}|^{2}\big]\text{d}r.
\]
From Gronwall's lemma we deduce the thesis.
\end{proof}

In the following proposition, using Lemma \ref{L:Cont-x}, we prove that the lower and upper value functions are Lipschitz with respect to the state variable, together with the gain functional.
\begin{proposition}
\label{P:Cont-x}
Under assumptions \textup{\textbf{(H$_{b,\sigma}$)}}, \textup{\textbf{(H$_{f,g}$)}} and \textup{\textbf{(H$_{c,\chi}$)}}, there exists a constant $C>0$ such that for every $t\in[0,T]$, $x,\hat{x}\in\mathbb{R}^{n}$, $u\in\mathcal{U}_{t,T}$ and $v\in\mathcal{V}_{t,T}$ we have:
\begin{align}
|J(t,x;u,v)-J(t,\hat{x};u,v)|+|V^{-}(t,x)&-V^{-}(t,\hat{x})|+|V^{+}(t,x)-V^{+}(t,\hat{x})|\leqslant C|x-\hat{x}|. \notag
\end{align}
\end{proposition}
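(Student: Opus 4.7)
The plan is to reduce everything to Lemma \ref{L:Cont-x} by exploiting the crucial observation that, with $u$ and $v$ fixed, the cost terms $c(\tau_m,\xi_m)\mathbbm{1}_{\{\tau_m\leqslant T\}}\prod_\ell\mathbbm{1}_{\{\tau_m\neq\rho_\ell\}}$ and $\chi(\rho_\ell,\eta_\ell)\mathbbm{1}_{\{\rho_\ell\leqslant T\}}$ in the gain functional (\ref{E:J}) do not depend on the initial state. Therefore in the difference $J(t,x;u,v)-J(t,\hat{x};u,v)$ these contributions cancel identically, and only the running gain and terminal payoff remain. So I would first write
\[
|J(t,x;u,v)-J(t,\hat{x};u,v)|\leqslant\mathbb{E}\Big[\int_t^T|f(X_s^{t,x;u,v})-f(X_s^{t,\hat{x};u,v})|\textup{d}s+|g(X_T^{t,x;u,v})-g(X_T^{t,\hat{x};u,v})|\Big].
\]

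Next, using assumption \textbf{(H$_{f,g}$)} to bring out Lipschitz constants of $f$ and $g$, and then applying Lemma \ref{L:Cont-x} to estimate $\mathbb{E}[|X_s^{t,x;u,v}-X_s^{t,\hat{x};u,v}|]\leqslant C|x-\hat{x}|$ uniformly in $s\in[t,T]$, $u$ and $v$, yields $|J(t,x;u,v)-J(t,\hat{x};u,v)|\leqslant C'|x-\hat{x}|$ for a constant $C'$ independent of $t$, $u$, $v$.

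The passage to the value functions is a standard abstract property of $\sup/\inf$ in the Elliott--Kalton formulation. For any fixed $\beta\in\mathcal{B}_{t,T}$ one has
\[
\Big|\sup_{u\in\mathcal{U}_{t,T}}J(t,x;u,\beta(u))-\sup_{u\in\mathcal{U}_{t,T}}J(t,\hat{x};u,\beta(u))\Big|\leqslant\sup_{u\in\mathcal{U}_{t,T}}|J(t,x;u,\beta(u))-J(t,\hat{x};u,\beta(u))|\leqslant C'|x-\hat{x}|,
\]
and then taking $\inf_{\beta}$ on both sides (using the same $\sup/\inf$ inequality once more) gives the bound for $V^-$. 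The argument for $V^+$ is symmetric, exchanging the roles of the two players.

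I do not foresee a real obstacle here. The only points requiring a bit of care are: (i) checking that the impulse‐cost contributions really cancel, which relies on the fact that $c$ and $\chi$ are evaluated at $(\tau_m,\xi_m)$ and $(\rho_\ell,\eta_\ell)$ --- entities that do not depend on the initial condition $x$ once $u$ and $v$ are chosen --- and that the indicators $\mathbbm{1}_{\{\tau_m\neq\rho_\ell\}}$ likewise depend only on the controls; (ii) verifying that the constant $C$ from Lemma \ref{L:Cont-x}, which was formulated as an $L^1$ bound, is sufficient (it is, since Lipschitz continuity of $f$ and $g$ only requires an $L^1$ estimate of the state difference); and (iii) observing that the $\sup/\inf$ manipulation is uniform in the strategies, so no measurability issue arises. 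All of these are routine verifications.
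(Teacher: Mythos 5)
Your proof is correct and follows essentially the same route as the paper: the impulse-cost terms cancel in the difference of gain functionals, the remaining terms are handled via the Lipschitz bounds on $f$ and $g$ together with Lemma \ref{L:Cont-x}, and the estimate passes to $V^{\pm}$ by the standard $\sup/\inf$ inequality. The paper merely states the last reduction ("it is enough to show the conclusion for $J$") without spelling it out, which you do explicitly.
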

\begin{proof}
It is enough to show that the conclusion holds true for the gain functional~$J$. Let us denote by $X:=X^{t,x;u,v}$ and $\hat{X}:=X^{t,\hat{x};u,v}$, then
\[
|J(t,x;u,v)-J(t,\hat{x};u,v)|\leqslant\mathbb{E}\Big[\int_{t}^{T}|f(X_{s})-f(\hat{X}_{s})|\textup{d}s+|g(X_{T})-g(\hat{X}_{T})|\Big].
\]
Thanks to assumption \textbf{(H$_{f,g}$)} there exists a constant $L>0$ such that
\[
|J(t,x;u,v)-J(t,\hat{x};u,v)|\leqslant L\int_{t}^{T}\mathbb{E}\big[|X_{s}-\hat{X}_{s}|\big]\textup{d}s+L\mathbb{E}\big[|X_{T}-\hat{X}_{T}|\big].
\]
From Lemma \ref{L:Cont-x} we get the thesis.
\end{proof}

Now we prove the regularity condition of the value functions with respect to time and we need the following lemma.
\begin{lemma}
\label{L:Cont-t}
Under assumptions \textup{\textbf{(H$_{b,\sigma}$)}}, \textup{\textbf{(H$_{f,g}$)}} and \textup{\textbf{(H$_{c,\chi}$)}} the lower and upper value functions are given by:
\[
V^{-}(t,x)=\inf_{\beta\in\bar{\mathcal{B}}_{t,T}}\sup_{u\in\bar{\mathcal{U}}_{t,T}}J(t,x;u,\beta(u))
\]
and
\[
V^{+}(t,x)=\sup_{\alpha\in\bar{\mathcal{A}}_{t,T}}\inf_{v\in\bar{\mathcal{V}}_{t,T}}J(t,x;\alpha(v),v),
\]
for every $(t,x)\in[0,T)\times\mathbb{R}^{n}$, where $\bar{\mathcal{U}}_{t,T}$ and $\bar{\mathcal{V}}_{t,T}$ contain all the impulse controls in $\mathcal{U}_{t,T}$ and $\mathcal{V}_{t,T}$, respectively, which have no impulses at time $t$. Similarly, $\bar{\mathcal{A}}_{t,T}$ and $\bar{\mathcal{B}}_{t,T}$ are subsets of $\mathcal{A}_{t,T}$ and $\mathcal{B}_{t,T}$, respectively. In particular, they contain all the nonanticipative strategies with values in $\bar{\mathcal{U}}_{t,T}$ and $\bar{\mathcal{V}}_{t,T}$, respectively.
\end{lemma}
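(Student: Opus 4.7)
The plan is to prove both identities by approximation arguments. The inclusions $\bar{\mathcal{U}}_{t,T}\subset\mathcal{U}_{t,T}$, $\bar{\mathcal{V}}_{t,T}\subset\mathcal{V}_{t,T}$, $\bar{\mathcal{A}}_{t,T}\subset\mathcal{A}_{t,T}$, $\bar{\mathcal{B}}_{t,T}\subset\mathcal{B}_{t,T}$ push the restricted $\sup$ downward and the restricted $\inf$ upward, so they do not compose to give either inequality for an $\inf\sup$. Both directions therefore require a genuine construction, and the hypothesis $t<T$ is essential because the key device is to shift impulses occurring exactly at the initial time to the strictly later deterministic time $t+\epsilon$.

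For the first identity, to prove that $V^-(t,x)$ is dominated by the right-hand side, I fix $\bar\beta\in\bar{\mathcal{B}}_{t,T}$ and $\epsilon\in(0,T-t)$ and define $\beta^\epsilon\in\mathcal{B}_{t,T}$ by $\beta^\epsilon(u):=\bar\beta(u^\epsilon)$, where $u^\epsilon\in\bar{\mathcal{U}}_{t,T}$ is obtained from $u\in\mathcal{U}_{t,T}$ by transferring every impulse of $u$ occurring at the initial time to the deterministic time $t+\epsilon$. Nonanticipativity of $\beta^\epsilon$ is inherited from $\bar\beta$, since two inputs that agree on $[\![t,\tau]\!]$ must share the same impulses at $t$ and on $(t,\tau]$, and both features are preserved by the shift. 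Because $\bar\beta(u^\epsilon)\in\bar{\mathcal{V}}_{t,T}$ has no impulse at $t$, Lemma~\ref{L:Cont-x} together with the Lipschitz continuity of $b,\sigma,f,g$ and the $1/2$-H\"older regularity of $c,\chi$ in time yield
\[
\bigl|J(t,x;u,\bar\beta(u^\epsilon))-J(t,x;u^\epsilon,\bar\beta(u^\epsilon))\bigr|=O(\epsilon^{1/2}).
\]
Since $J(t,x;u^\epsilon,\bar\beta(u^\epsilon))\leq\sup_{\bar u\in\bar{\mathcal{U}}_{t,T}}J(t,x;\bar u,\bar\beta(\bar u))$, letting $\epsilon\downarrow 0$ and then taking the supremum over $u\in\mathcal{U}_{t,T}$ gives $V^-(t,x)\leq\sup_{\bar u}J(t,x;\bar u,\bar\beta(\bar u))$, and the infimum over $\bar\beta$ yields the desired inequality.

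The reverse inequality is symmetric: given $\beta\in\mathcal{B}_{t,T}$, I define $\bar\beta^\epsilon\in\bar{\mathcal{B}}_{t,T}$ by letting $\bar\beta^\epsilon(u)$ be the impulse control obtained from $\beta(u)$ by shifting its impulses at $t$ forward to $t+\epsilon$. Nonanticipativity is checked in the same way, and an analogous estimate bounds $|J(t,x;\bar u,\beta(\bar u))-J(t,x;\bar u,\bar\beta^\epsilon(\bar u))|$ by $O(\epsilon^{1/2})$ for $\bar u\in\bar{\mathcal{U}}_{t,T}$. Passing to the supremum and then the infimum and sending $\epsilon\downarrow 0$ delivers the complementary inequality. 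The identity for $V^+$ is proved by the symmetric argument, interchanging the roles of the two players.

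The main obstacle is to ensure that the $O(\epsilon^{1/2})$ perturbation is uniform in the control being optimized over, because the implicit constant depends on the random number and total magnitude of the impulses occurring at time $t$. Admissibility $\mathbb{E}[\mu_{t,T}]<\infty$ controls the former in expectation, while the strict positivity $\inf c,\inf\chi>0$ together with the subadditivity conditions (\ref{E:c})--(\ref{E:chi}) allow one to restrict attention to strategies and controls whose payoff lies within a bounded neighbourhood of the candidate value; on that subclass the quantities in question remain uniformly bounded, so the limit $\epsilon\downarrow 0$ can be performed inside the suprema and infima.
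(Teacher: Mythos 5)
Your proposal rests on the same key device as the paper's proof---shifting every impulse occurring exactly at the initial time $t$ to a strictly later time $t+\epsilon$ (the paper uses $t+\tfrac1n$ via the stopping times $\tau_{n}=(\tau+\tfrac1n)\mathbbm{1}_{\{\tau=t\}}+\tau\mathbbm{1}_{\{\tau>t\}}$) and controlling the resulting perturbation of $J$---but you organize the argument differently around the $\inf\sup$ structure. The paper fixes a single pair $(u,\beta)$, freezes $v:=\beta(u)$, shifts both controls, and shows $|J(t,x;u,v)-J(t,x;u_{n},v_{n})|\leqslant\varepsilon$ by almost sure convergence of the shifted action times and dominated convergence, reducing multiple impulses at $t$ to a single one via (\ref{E:c})--(\ref{E:chi}); as written, this pairwise approximation does not by itself compare the two $\inf\sup$'s, precisely because, as you correctly observe at the outset, the inclusions $\bar{\mathcal{U}}_{t,T}\subset\mathcal{U}_{t,T}$ and $\bar{\mathcal{B}}_{t,T}\subset\mathcal{B}_{t,T}$ push the restricted extrema in opposite directions. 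Your construction of the intertwined strategies $\beta^{\epsilon}(u)=\bar\beta(u^{\epsilon})$ and of $\bar\beta^{\epsilon}$, with the nonanticipativity check, supplies exactly the missing bookkeeping and is the cleaner way to obtain both inequalities. The price you pay is that your quantitative $O(\epsilon^{1/2})$ bound must be uniform over the control being optimized, whereas the paper's dominated-convergence argument needs no uniformity for a fixed pair; you identify this correctly, and your remedy (use $\inf c,\inf\chi>0$, the boundedness of $f,g$ and the subadditivity conditions to restrict to near-optimal controls with a uniformly bounded expected number of impulses at $t$, while the state perturbation is already uniform in the impulse size because $b$ and $\sigma$ are bounded) is sound, though it is the one step of your route that genuinely requires more work than you have written out. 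Both arguments share the same unexamined edge case, namely that a shifted impulse of one player may collide with an impulse of the other at the new time and flip the cancellation indicators $\prod_{\ell}\mathbbm{1}_{\{\tau_{m}\neq\rho_{\ell}\}}$, so I do not count that against you.
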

\begin{proof}
Let $\varepsilon>0$, $u\in\mathcal{U}_{t,T}\backslash\bar{\mathcal{U}}_{t,T}$ and $\beta\in\mathcal{B}_{t,T}\backslash\bar{\mathcal{B}}_{t,T}$. We have to prove that there exist $\bar{u}\in\bar{\mathcal{U}}_{t,T}$ and $\bar{\beta}\in\bar{\mathcal{B}}_{t,T}$ such that
\[
|J(t,x;u,\beta(u))-J(t,x;\bar{u},\bar{\beta}(\bar{u}))|\leqslant\varepsilon.
\]
Let $v:=\beta(u)\in\mathcal{V}_{t,T}$ and $\bar{\beta}(\tilde{u})=\bar{v}\in\bar{\mathcal{V}}_{t,T}$, for every $\tilde{u}\in\mathcal{U}_{t,T}$. Then, given $u\in\mathcal{U}_{t,T}\backslash\bar{\mathcal{U}}_{t,T}$ and $v\in\mathcal{V}_{t,T}$, we have to prove that there exist $\bar{u}\in\bar{\mathcal{U}}_{t,T}$ and $\bar{v}\in\bar{\mathcal{V}}_{t,T}$ such that
\[
|J(t,x;u,v)-J(t,x;\bar{u},\bar{v})|\leqslant\varepsilon.
\]
We may suppose $v\in\mathcal{V}_{t,T}\backslash\bar{\mathcal{V}}_{t,T}$, in the other case the proof is simpler and we omit it.

We start by considering the case in which $u$ and $v$ have only a single impulse at time~$t$. As a consequence, there exist two $[t,T]$-valued $\mathbb{F}$-stopping times $\tau$ and $\rho$, with $\mathbb{P}(\tau=t)>0$ and $\mathbb{P}(\rho=t)>0$, such that
\[
u=\xi\mathbbm{1}_{[\tau,T]}+\hat{u} \qquad \text{ and } \qquad v=\eta\mathbbm{1}_{[\rho,T]}+\hat{v},
\]
where $\hat{u}=\sum_{m\geqslant1}\xi_{m}\mathbbm{1}_{[\tau_{m},T]}\in\bar{\mathcal{U}}_{[t,T]}$, $\hat{v}=\sum_{\ell\geqslant1}\eta_{\ell}\mathbbm{1}_{[\rho_{\ell},T]}\in\bar{\mathcal{V}}_{[t,T]}$, $\xi$ is an $\mathcal{F}_{\tau}$-measurable $\mathscr{U}$-valued random variable and $\eta$ is an $\mathcal{F}_{\rho}$-measurable $\mathscr{V}$-valued random variable.

For every integer $n\geqslant1/(T-t)$, we introduce the following $\mathbb{F}$-stopping times:
\[
\tau_{n}=\big(\tau+\tfrac{1}{n}\big)\mathbbm{1}_{\{\tau=t\}}+\tau\mathbbm{1}_{\{\tau>t\}} \qquad \text{ and } \qquad
\rho_{n}=\big(\rho+\tfrac{1}{n}\big)\mathbbm{1}_{\{\rho=t\}}+\rho\mathbbm{1}_{\{\rho>t\}}.
\]
Define the admissible impulse controls:
\[
u_{n}=\xi\mathbbm{1}_{[\tau_{n},T]}+\hat{u} \quad \text{ and } \quad v_{n}=\eta\mathbbm{1}_{[\rho_{n},T]}+\hat{v}.
\]
Note that $u_{n}\in\bar{\mathcal{U}}_{[t,T]}$ and $v_{n}\in\bar{\mathcal{V}}_{[t,T]}$. Then we have:
\begin{align*}
&J(t,x;u,v)-J(t,x;u_{n},v_{n})=\mathbb{E}\Big[\int_{t}^{T}\big(f(X_{s}^{t,x;u,v})-f(X_{s}^{t,x;u_{n},v_{n}})\big)\text{d}s \\
&-\sum_{m\geqslant1}c(\tau_{m},\xi_{m})\mathbbm{1}_{\{\tau_{m}\leqslant T\}}
\prod_{\ell\geqslant1}\mathbbm{1}_{\{\tau_{m}\neq\rho_{\ell}\}}\big(\mathbbm{1}_{\{\tau_{m}\neq\rho\}}-\mathbbm{1}_{\{\tau_{m}\neq\rho_{n}\}}\big) \\
&-c(\tau,\xi)\mathbbm{1}_{\{\tau\leqslant T\}}\prod_{\ell\geqslant1}\mathbbm{1}_{\{\tau\neq\rho_{\ell}\}}\mathbbm{1}_{\{\tau\neq\rho\}}
+c(\tau_{n},\xi)\mathbbm{1}_{\{\tau_{n}\leqslant T\}}\prod_{\ell\geqslant1}\mathbbm{1}_{\{\tau_{n}\neq\rho_{\ell}\}}\mathbbm{1}_{\{\tau_{n}\neq\rho_{n}\}}+ \\
&+\chi(\rho,\eta)\mathbbm{1}_{\{\rho\leqslant T\}}-\chi(\rho_{n},\eta)\mathbbm{1}_{\{\rho_{n}\leqslant T\}}
+g(X_{T}^{t,x;u,v})-g(X_{T}^{t,x;u_{n},v_{n}})\Big].
\end{align*}
Now we observe that $\tau_{n}\rightarrow\tau$ and $\rho_{n}\rightarrow\rho$, as $n$ tends to infinity, $\mathbb{P}$-a.s.. Moreover, $\forall s\in(t,T]$,
$X_{s}^{t,x;u_{n},v_{n}}\rightarrow X_{s}^{t,x;u,v}$, as $n$ tends to infinity, $\mathbb{P}$-a.s.. Therefore, from the Dominated Convergence Theorem, we deduce the existence of an integer $N\geqslant1$ such that
\[
|J(t,x;u,v)-J(t,x;u_{n},v_{n})|\leqslant\varepsilon, \qquad \forall n\geqslant N.
\]
Finally, we have to consider the case in which $u$ or $v$ or both have multiple impulses at time $t$. However it is simple to show, using conditions (\ref{E:c}) and (\ref{E:chi}), that we can reduce this case to the previous one with only a single impulse at time $t$.
\end{proof}
\begin{proposition}
\label{P:Cont-t}
Under assumptions \textup{\textbf{(H$_{b,\sigma}$)}}, \textup{\textbf{(H$_{f,g}$)}} and \textup{\textbf{(H$_{c,\chi}$)}}, there exists a constant $C>0$ such that:
\[
|V^{-}(t,x)-V^{-}(\hat{t},x)|+|V^{+}(t,x)-V^{+}(\hat{t},x)|\leqslant C|t-\hat{t}|^{\frac{1}{2}},
\]
for every $t,\hat{t}\in[0,T)$, $x\in\mathbb{R}^{n}$.
\end{proposition}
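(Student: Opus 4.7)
The plan is to establish both $V^-(t,x)\le V^-(\hat t,x)+C(\hat t-t)^{1/2}$ and the reverse inequality (and analogously for $V^+$, by symmetry in the roles of $\sup$ and $\inf$); I may assume $0\le t<\hat t<T$. Fix $\varepsilon>0$ and, by Lemma \ref{L:Cont-t}, pick a strategy $\bar\beta\in\bar{\mathcal{B}}_{\hat t,T}$ that is $\varepsilon$-optimal for $V^-(\hat t,x)$. From $\bar\beta$ I build $\tilde\beta\in\mathcal{B}_{t,T}$ by declaring $\tilde\beta(u)$ to be empty on $[t,\hat t)$ and equal to $\bar\beta(u_{[\hat t,T]})$ on $[\hat t,T]$, for every $u\in\mathcal{U}_{t,T}$; nonanticipativity is routine from Definition \ref{D:Strategy}.

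For any $u\in\mathcal{U}_{t,T}$, I split the gain $J(t,x;u,\tilde\beta(u))$ at $\hat t$. On $[t,\hat t]$ no impulse of player II occurs, so that piece of $J$ is at most $\|f\|_\infty(\hat t-t)$, since player I's impulse costs are nonnegative and enter with a minus sign. The $[\hat t,T]$ piece is rewritten, by conditioning on $\mathcal{F}_{\hat t}$ and exploiting the time-homogeneity of $b,\sigma$, as $\mathbb{E}[J(\hat t,X^{t,x;u,\tilde\beta(u)}_{\hat t};u_{[\hat t,T]},\bar\beta(u_{[\hat t,T]}))]$. Since no impulse of $\tilde\beta(u)$ has yet occurred, $X^{t,x;u,\tilde\beta(u)}_{\hat t}=X^{t,x;u,0}_{\hat t}$, and applying the Lipschitz-in-$x$ estimate of Proposition \ref{P:Cont-x} to swap this random state for the deterministic $x$ produces an error controlled by $\mathbb{E}|X^{t,x;u,0}_{\hat t}-x|$.

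The main obstacle is to bound this expectation, which decomposes into the SDE displacement (of order $(\hat t-t)^{1/2}$ by the usual Burkholder-Davis-Gundy and Gronwall arguments under \textbf{(H$_{b,\sigma}$)}) and the aggregated player-I impulses occurring on $[t,\hat t]$. To handle the latter I argue that in the outer supremum one may restrict to $u$ having no impulses on $[t,\hat t]$, in the spirit of Lemma \ref{L:Cont-t}. Given $u$ with impulses $\xi_{m}$ at times $\tau_{m}\in[t,\hat t]$, consider the control $u^{(n)}$ obtained by postponing each such impulse to $\hat t+1/n$ and aggregating them into a single impulse $\sum_{\tau_m\le\hat t}\xi_m$; by the monotonicity assumption (\ref{E:c_chi_time}) and the subadditivity of $c$ extracted from (\ref{E:c}) by taking $z=0\in\mathscr{V}$, the new total impulse cost does not exceed the original one, and the dominated-convergence scheme of Lemma \ref{L:Cont-t} gives $\liminf_n J(t,x;u^{(n)},\tilde\beta(u^{(n)}))\ge J(t,x;u,\tilde\beta(u))$. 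Hence one may further restrict the supremum to $u$ with no impulses on $[t,\hat t]$, for which $X^{t,x;u,0}_{\hat t}$ is the pure diffusion and $\mathbb{E}|X^{t,x;u,0}_{\hat t}-x|\le C(\hat t-t)^{1/2}$. Combining these bounds yields $V^-(t,x)\le V^-(\hat t,x)+C(\hat t-t)^{1/2}+\varepsilon$, and sending $\varepsilon\to 0$ closes the direction.

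The reverse inequality $V^-(\hat t,x)\le V^-(t,x)+C(\hat t-t)^{1/2}$ is handled symmetrically, starting from an $\varepsilon$-optimal $\tilde\beta\in\bar{\mathcal{B}}_{t,T}$ for $V^-(t,x)$ and, for $u\in\bar{\mathcal{U}}_{\hat t,T}$, extending $u$ to $\mathcal{U}_{t,T}$ by being empty on $[t,\hat t)$; now it is the impulses that $\tilde\beta(u)$ might place on $[t,\hat t]$ which must be absorbed, and this is done using (\ref{E:chi}) together with (\ref{E:c_chi_time}), which ensure that postponing and aggregating those impulses does not decrease $J$. The treatment of $V^+$ is entirely analogous, with the roles of $\sup$/$\inf$ and of $\mathcal{U}$/$\mathcal{V}$ exchanged, so both value functions satisfy the claimed $1/2$-H\"older estimate.
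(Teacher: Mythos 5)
Your overall plan for the first inequality has a genuine gap at the step where you ``restrict the supremum to $u$ having no impulses on $[t,\hat t\,]$''. To carry out that reduction you must compare $J(t,x;u,\tilde\beta(u))$ with $J(t,x;u^{(n)},\tilde\beta(u^{(n)}))$, and these two gains are evaluated against \emph{different} player-II controls: since the postponed, aggregated impulse of $u^{(n)}$ sits at $\hat t+1/n$, it is seen by the restriction $u^{(n)}_{[\hat t,T]}$, so $\tilde\beta(u^{(n)})=\bar\beta(u^{(n)}_{[\hat t,T]})\neq\bar\beta(u_{[\hat t,T]})=\tilde\beta(u)$. A nonanticipative strategy is only a measurable nonanticipative map and carries no continuity in its argument, so nothing prevents $\bar\beta$ from answering $u^{(n)}_{[\hat t,T]}$ with a response that is much worse for player I; the claimed inequality $\liminf_n J(t,x;u^{(n)},\tilde\beta(u^{(n)}))\geqslant J(t,x;u,\tilde\beta(u))$ therefore does not follow. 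A second, more benign, inaccuracy sits in the same step: unlike in Lemma~\ref{L:Cont-t}, where the action times are shifted by $1/n\to0$, here each $\tau_m\in[t,\hat t\,]$ is moved by the non-vanishing amount $\hat t+1/n-\tau_m$, so the trajectories of $u^{(n)}$ do not converge to those of $u$, and the dominated-convergence scheme only yields your inequality up to an additional error of order $(\hat t-t)^{1/2}$ coming from the perturbed diffusion on $[t,\hat t\,]$ --- harmless for the final conclusion, but it must be estimated rather than dismissed. Note also that the reduction is load-bearing: the cost $c$ is only bounded below by a positive constant, not by $|y|$, so for a general $u$ the displacement $\mathbb{E}|X^{t,x;u,0}_{\hat t}-x|$ caused by large early impulses is not compensated by the cost terms.

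The paper's proof avoids precisely this difficulty by never trying to delete player I's early impulses from the $t$-game: it fixes an $\varepsilon$-optimal pair $(u_\varepsilon,\beta_\varepsilon)$, transplants the impulses of $u_\varepsilon$ on $[t,\hat t\,]$ into the $\hat t$-game as a single aggregated impulse placed at time $\hat t$ (the control $\hat u_\varepsilon$), and --- crucially --- feeds both games the \emph{same} player-II control $v_\varepsilon=\beta_\varepsilon(\hat u_\varepsilon)$ by using the constant strategy $\hat\beta_\varepsilon\equiv v_\varepsilon$ on the $t$-side. With identical impulse contributions from time $\hat t$ onwards, the two state trajectories differ only through the diffusion accumulated on $[t,\hat t\,]$, which is of order $|t-\hat t|^{1/2}$ in $L^{1}$ by \textup{\textbf{(H$_{b,\sigma}$)}}, while conditions (\ref{E:c}) and (\ref{E:c_chi_time}) make the aggregated cost favourable. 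Your treatment of the reverse inequality (where you aggregate player II's impulses inside the \emph{output} of the strategy, so the strategy's input is unchanged) and your choice of ingredients are sound; to repair the first direction you should likewise keep the aggregated impulse at time $\hat t$ inside the comparison game and freeze player II's response, rather than attempt to remove the impulses from the supremum.
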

\begin{rmk}
\label{R:ContExt}
\textup{Observe that the restriction of the lower value function to $[0,T)\times\mathbb{R}^{n}$ admits an extension to $[0,T]\times\mathbb{R}^{n}$, $1/2$-H\"older continuous in time and Lipschitz continuous in the state variable, which in general does not coincide with the value function itself at the horizon time $T$. An analogous remark applies to the upper value function~$V^{+}$.}
\end{rmk}
\begin{proof}
We make the proof only for the lower value function $V^{-}$, the other case being analogous. Let $\hat{t}\in[t,T]$, then for every $\varepsilon>0$, thanks to Lemma~\ref{L:Cont-t}, there exist $u_{\varepsilon}\in\mathcal{U}_{t,T}$ and $\beta_{\varepsilon}\in\bar{\mathcal{B}}_{\hat{t},T}$ such that
\[
V^{-}(t,x)-V^{-}(\hat{t},x)\leqslant J(t,x;u_{\varepsilon},\hat{\beta}_{\varepsilon}(u_{\varepsilon}))-J(\hat{t},x;\hat{u}_{\varepsilon},\beta_{\varepsilon}(\hat{u}_{\varepsilon}))+\varepsilon,
\]
where $\hat{u}_{\varepsilon}\in\mathcal{U}_{\hat{t},T}$ and $\hat{\beta}_{\varepsilon}\in\mathcal{B}_{t,T}$ will be choosen later. In particular, suppose that $u_{\varepsilon}=\sum_{m\geqslant1}\xi_{m}^{\varepsilon}\mathbbm{1}_{[\tau_{m}^{\varepsilon},T]}\in\mathcal{U}_{t,T}$, then define $\hat{u}_{\varepsilon}$ as follows:
\[
\hat{u}_{\varepsilon}=\sum_{\tau_{m}^{\varepsilon}\leqslant\hat{t}}\xi_{m}^{\varepsilon}\mathbbm{1}_{\hat{t}}
+\sum_{\tau_{m}^{\varepsilon}>\hat{t}}\xi_{m}^{\varepsilon}\mathbbm{1}_{[\tau_{m}^{\varepsilon},T]}.
\]
Thus $\hat{u}_{\varepsilon}$ is nothing but the impulse control obtained from $u_{\varepsilon}$ by moving all the impulses in the time interval $[t,\hat{t}]$ to the instant $\hat{t}$. Now define $\hat{\beta}_{\varepsilon}(u)=\beta_{\varepsilon}(\hat{u}_{\varepsilon})=:v_{\varepsilon}\in\bar{\mathcal{V}}_{t,T}$, for every $u\in\mathcal{U}_{t,T}$. Then we have:
\begin{equation}
\label{P:Cont-t,Bounds0}
V^{-}(t,x)-V^{-}(\hat{t},x)\leqslant J(t,x;u_{\varepsilon},v_{\varepsilon})-J(\hat{t},x;\hat{u}_{\varepsilon},v_{\varepsilon})+\varepsilon.
\end{equation}
Using conditions (\ref{E:c}) and (\ref{E:c_chi_time}) we find
\[
\sum_{\tau_{m}^{\varepsilon}\leqslant\hat{t}}c(\tau_{m}^{\varepsilon},\xi_{m}^{\varepsilon})\geqslant c\Big(\hat{t},\sum_{\tau_{m}^{\varepsilon}\leqslant\hat{t}}\xi_{m}^{\varepsilon}\Big),
\]
if there is at least one impulse in the time interval $[t,\hat{t}]$.
Therefore
\begin{align*}
J(t,x;u_{\varepsilon},v_{\varepsilon})-J(\hat{t},x;\hat{u}_{\varepsilon},v_{\varepsilon})\leqslant\mathbb{E}\Big[&\int_{t}^{T}f(X_{s}^{t,x;u_{\varepsilon},v_{\varepsilon}})\text{d}s
-\int_{\hat{t}}^{T}f(X_{s}^{\hat{t},x;\hat{u}_{\varepsilon},v_{\varepsilon}})\text{d}s+ \\
&+g(X_{T}^{t,x;u_{\varepsilon},v_{\varepsilon}})-g(X_{T}^{\hat{t},x;\hat{u}_{\varepsilon},v_{\varepsilon}})\Big].
\end{align*}
We note that, thanks to assumption \text{\textbf{(H$_{b,\sigma}$)}}, there exists a constant $C_{1}>0$ such that
\[
\mathbb{E}\big[|X_{s}^{t,x;u_{\varepsilon},v_{\varepsilon}}-X_{s}^{\hat{t},x;\hat{u}_{\varepsilon},v_{\varepsilon}}|\big]\leqslant C_{1}|t-\hat{t}|^{\frac{1}{2}},
\]
for all $s\in[\hat{t},T]$. Therefore we find, using also the boundedness of $f$, that there exists a constant $C_{2}>0$ such that
\[
J(t,x;u_{\varepsilon},v_{\varepsilon})-J(\hat{t},x;\hat{u}_{\varepsilon},v_{\varepsilon})\leqslant C_{2}|t-\hat{t}|^{\frac{1}{2}}.
\]
Hence
\[
V^{-}(t,x)-V^{-}(\hat{t},x)\leqslant C_{2}|t-\hat{t}|^{\frac{1}{2}}.
\]
In a similar way we can prove that there exists a constant $C_{3}>0$ such that
\[
V^{-}(t,x)-V^{-}(\hat{t},x)\geqslant-C_{3}|t-\hat{t}|^{\frac{1}{2}},
\]
from which we deduce the thesis.
\end{proof}

Finally, in the following proposition, we prove that the two value functions are bounded.
\begin{proposition}
\label{P:VBounded}
Under assumptions \textup{\textbf{(H$_{b,\sigma}$)}}, \textup{\textbf{(H$_{f,g}$)}} and \textup{\textbf{(H$_{c,\chi}$)}}, the lower and upper value functions are bounded.
\end{proposition}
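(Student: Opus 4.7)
The plan is to bound each value function uniformly by $M := T\|f\|_{\infty}+\|g\|_{\infty}$, by comparing the game with its degenerate versions in which one of the two players never acts. The key observations are: (i) the impulse control $u\equiv 0$, obtained by taking every action time equal to $+\infty$, lies in $\mathcal{U}_{t,T}$ (with $\mu_{t,T}=0$), and likewise on the $\mathcal{V}_{t,T}$ side; (ii) the constant map $u\mapsto 0$ is a legitimate element of $\mathcal{B}_{t,T}$, and $v\mapsto 0$ one of $\mathcal{A}_{t,T}$, since constant maps are trivially nonanticipative in the sense of Definition \ref{D:Strategy}; (iii) the positivity $\inf c>0$ and $\inf\chi>0$ from \textbf{(H$_{c,\chi}$)}, together with the boundedness of $f$ and $g$ from \textbf{(H$_{f,g}$)}, produces the desired sign structure.

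For the upper bounds, plug $\beta^{0}(u)\equiv 0\in\mathcal{B}_{t,T}$ into the definition (\ref{E:V-}):
$$V^{-}(t,x)\leqslant\sup_{u\in\mathcal{U}_{t,T}}J(t,x;u,0)=\sup_{u}\mathbb{E}\Big[\int_{t}^{T}f(X_{s}^{t,x;u,0})\textup{d}s-\sum_{m\geqslant 1}c(\tau_{m},\xi_{m})\mathbbm{1}_{\{\tau_{m}\leqslant T\}}+g(X_{T}^{t,x;u,0})\Big].$$
Since $c>0$ the subtracted sum is non-negative, so it can be dropped in the upper bound, and the boundedness of $f,g$ yields $V^{-}(t,x)\leqslant M$. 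For $V^{+}$, the trivial inequality $\inf_{v}J(t,x;\alpha(v),v)\leqslant J(t,x;\alpha(0),0)$ holds for every $\alpha\in\mathcal{A}_{t,T}$; since $\alpha(0)\in\mathcal{U}_{t,T}$, this gives $V^{+}(t,x)\leqslant\sup_{u\in\mathcal{U}_{t,T}}J(t,x;u,0)\leqslant M$.

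The lower bounds are symmetric. Setting $\alpha^{0}(v)\equiv 0$ in (\ref{E:V+}) gives $V^{+}(t,x)\geqslant\inf_{v\in\mathcal{V}_{t,T}}J(t,x;0,v)$, while the inequality $\sup_{u}J(t,x;u,\beta(u))\geqslant J(t,x;0,\beta(0))$ valid for every $\beta\in\mathcal{B}_{t,T}$ yields the same lower bound for $V^{-}$. Since $\chi>0$ the sum $\sum_{\ell\geqslant 1}\chi(\rho_{\ell},\eta_{\ell})\mathbbm{1}_{\{\rho_{\ell}\leqslant T\}}$ is non-negative and appears with a plus sign in $J(t,x;0,v)$, so both $V^{\pm}(t,x)\geqslant -M$. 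No serious obstacle arises: the proof rests only on elementary inf-sup manipulations, the existence of the trivial admissible control/strategy, and the sign structure of the cost terms built into \textbf{(H$_{c,\chi}$)}.
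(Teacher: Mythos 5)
Your proof is correct and follows essentially the same route as the paper: in both cases one compares against the degenerate game where one player uses the no-impulse control (or the constant no-impulse strategy), and then uses the positivity of $c$ and $\chi$ together with the boundedness of $f$ and $g$ to discard the cost/gain sums with the right sign. Your version is marginally cleaner in that it replaces the paper's $\varepsilon$-optimal strategy selection with the direct inequalities $\inf_{\beta}\sup_{u}\geqslant\inf_{\beta}J(t,x;u_{0},\beta(u_{0}))$ and $\sup_{\alpha}\inf_{v}\leqslant\sup_{\alpha}J(t,x;\alpha(v_{0}),v_{0})$, but the substance is identical.
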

\begin{proof}
We make the proof for the lower value function, the other case being analogous. Let $\varepsilon>0$, then, using the definition of lower value function, equation (\ref{E:V-}), we have, for $(t,x)\in[0,T]\times\mathbb{R}^{n}$,
\begin{align*}
V^{-}(t,x)=&\,
\inf_{\beta\in\mathcal{B}_{t,T}}\sup_{u\in\mathcal{U}_{t,T}}\mathbb{E}\Big[\int_{t}^{T}f(X_{s}^{t,x;u,\beta(u)})\textup{d}s+g(X_{T}^{t,x;u,\beta(u)})+ \\
&\hspace{-1.5cm}+\sum_{\ell\geqslant1}\chi(\rho_{\ell}(u),\eta_{\ell}(u))\mathbbm{1}_{\{\rho_{\ell}(u)\leqslant T\}}
-\sum_{m\geqslant1}c(\tau_{m},\xi_{m})\mathbbm{1}_{\{\tau_{m}\leqslant T\}}\prod_{\ell\geqslant1}\mathbbm{1}_{\{\tau_{m}\neq\rho_{\ell}(u)\}}\Big]
\geqslant \\
&\hspace{-1.5cm}\geqslant
\mathbb{E}\Big[\int_{t}^{T}f(X_{s}^{t,x;u_{0},\beta_{\varepsilon}(u_{0})})\textup{d}s+g(X_{T}^{t,x;u_{0},\beta_{\varepsilon}(u_{0})})+\sum_{\ell\geqslant1}\chi(\rho_{\ell}^{\varepsilon}(u_{0}),\eta_{\ell}^{\varepsilon}(u_{0}))\mathbbm{1}_{\{\rho_{\ell}^{\varepsilon}(u_{0})\leqslant T\}}\Big] \\
&\hspace{-1.5cm}-\varepsilon\geqslant
\mathbb{E}\Big[\int_{t}^{T}f(X_{s}^{t,x;u_{0},\beta_{\varepsilon}(u_{0})})\textup{d}s+g(X_{T}^{t,x;u_{0},\beta_{\varepsilon}(u_{0})})\Big]-\varepsilon,
\end{align*}
for some $\beta_{\varepsilon}(u_{0})=\sum_{\ell\geqslant1}\eta_{\ell}^{\varepsilon}(u_{0})\mathbbm{1}_{[\rho_{\ell}^{\varepsilon}(u_{0}),T]}\in\mathcal{V}_{t,T}$, where $u_{0}\in\mathcal{U}_{t,T}$ is the control with no impulses. Since $f$ and $g$ are bounded we deduce that $V^{-}$ is bounded from below. In a similar way, we can prove that $V^{-}$ is also bounded from above.
\end{proof}

\section{Dynamic Programming Principle}
\label{S:DPP}
We now present the dynamic programming principle (DPP) for the stochastic differential game.
\begin{theorem}
\label{T:DPP}
Under assumptions \textup{\textbf{(H$_{b,\sigma}$)}}, \textup{\textbf{(H$_{f,g}$)}} and \textup{\textbf{(H$_{c,\chi}$)}}, given $0\leqslant t\leqslant s<T$ and $x\in\mathbb{R}^{n}$, we have:
\begin{align}
\label{E:DPP-}
V^{-}(t,x):=\inf_{\beta\in\mathcal{B}_{t,T}}\sup_{u\in\mathcal{U}_{t,T}}\mathbb{E}\Big[\int_{t}^{s}f(X_{r}^{t,x;u,\beta(u)})\textup{d}r&
+\sum_{\ell\geqslant1}\chi(\rho_{\ell}(u),\eta_{\ell}(u))\mathbbm{1}_{\{\rho_{\ell}(u)\leqslant s\}} \notag \\
&\hspace{-5cm}-\sum_{m\geqslant1}c(\tau_{m},\xi_{m})\mathbbm{1}_{\{\tau_{m}\leqslant s\}}\prod_{\ell\geqslant1}\mathbbm{1}_{\{\tau_{m}\neq\rho_{\ell}(u)\}}
+V^{-}(s,X_{s}^{t,x;u,\beta(u)})\Big]
\end{align}
and
\begin{align}
\label{E:DPP+}
V^{+}(t,x):=\sup_{\alpha\in\mathcal{A}_{t,T}}\inf_{v\in\mathcal{V}_{t,T}}\mathbb{E}\Big[\int_{t}^{s}f(X_{r}^{t,x;\alpha(v),v})\textup{d}r&
+\sum_{\ell\geqslant1}\chi(\rho_{\ell},\eta_{\ell})\mathbbm{1}_{\{\rho_{\ell}\leqslant s\}} \\
&\hspace{-5cm}-\sum_{m\geqslant1}c(\tau_{m}(v),\xi_{m}(v))\mathbbm{1}_{\{\tau_{m}(v)\leqslant s\}}\prod_{\ell\geqslant1}\mathbbm{1}_{\{\tau_{m}(v)\neq\rho_{\ell}\}}
+V^{+}(s,X_{s}^{t,x;\alpha(v),v})\Big]. \notag
\end{align}
\end{theorem}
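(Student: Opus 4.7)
The plan is to establish (\ref{E:DPP-}) by proving both inequalities between $V^{-}(t,x)$ and the right-hand side, which I denote by $W^{-}(t,x)$; equation (\ref{E:DPP+}) is handled by the symmetric argument. A preliminary reduction via Lemma~\ref{L:Cont-t} allows me to restrict the infima and suprema to strategies and controls in $\bar{\mathcal{U}}$, $\bar{\mathcal{V}}$, $\bar{\mathcal{A}}$, $\bar{\mathcal{B}}$, so that no impulse is performed exactly at the concatenation instant $s$, and the pasting of two admissible objects is itself admissible.

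For $V^{-}(t,x)\leqslant W^{-}(t,x)$, I fix $\varepsilon>0$ and pick $\beta_{1}\in\bar{\mathcal{B}}_{t,T}$ that is $\varepsilon$-optimal in the problem defining $W^{-}(t,x)$. Using the Lipschitz continuity of $V^{-}(s,\cdot)$ from Proposition~\ref{P:Cont-x}, I partition $\mathbb{R}^{n}$ into a countable Borel family $(B_{k})_{k\geqslant1}$ of diameter at most $\varepsilon$ with reference points $x_{k}\in B_{k}$, and for each $k$ I select $\beta_{2}^{k}\in\bar{\mathcal{B}}_{s,T}$ that is $\varepsilon$-optimal for $V^{-}(s,x_{k})$. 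Then I paste
\[
\tilde{\beta}(u)(r):=\beta_{1}(u)(r)\mathbbm{1}_{[t,s)}(r)+\sum_{k\geqslant1}\beta_{2}^{k}(u|_{[s,T]})(r)\mathbbm{1}_{\{X_{s}^{t,x;u,\beta_{1}(u)}\in B_{k}\}}\mathbbm{1}_{[s,T]}(r),
\]
check that $\tilde{\beta}\in\mathcal{B}_{t,T}$ (nonanticipativity follows from the $\mathcal{F}_{s}$-measurability of the index $k$ and from the nonanticipativity of $\beta_{1}$ and each $\beta_{2}^{k}$; admissibility, $\mathbb{E}[\mu_{t,T}(\tilde{\beta}(u))]<\infty$, by additivity of the counting functional), and then bound $V^{-}(t,x)\leqslant\sup_{u}J(t,x;u,\tilde{\beta}(u))$ from above by $W^{-}(t,x)+C\varepsilon$ using the $\varepsilon$-optimality of $\beta_{1}$ and of the $\beta_{2}^{k}$ together with the estimate $|V^{-}(s,X_{s})-V^{-}(s,x_{k})|\leqslant C\varepsilon$ on $\{X_{s}\in B_{k}\}$.

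For the opposite inequality $V^{-}(t,x)\geqslant W^{-}(t,x)$, I fix an arbitrary $\beta\in\mathcal{B}_{t,T}$ and show that for each $u_{1}\in\bar{\mathcal{U}}_{t,T}$ the quantity $\mathbb{E}\big[\int_{t}^{s}f+\text{(costs up to }s\text{)}+V^{-}(s,X_{s}^{t,x;u_{1},\beta(u_{1})})\big]$ is bounded from above by $\sup_{u}J(t,x;u,\beta(u))+C\varepsilon$; taking the infimum over $\beta$ and the supremum over $u_{1}$ then yields $W^{-}(t,x)\leqslant V^{-}(t,x)+C\varepsilon$. As in the first direction, a countable partition of $\mathbb{R}^{n}$ driven by the Lipschitz continuity of $V^{-}$ lets me select, for each cell, an $\varepsilon$-optimal continuation control $u_{2}\in\bar{\mathcal{U}}_{s,T}$ for the inner supremum defining $V^{-}(s,X_{s})$ against the continuation strategy induced by $\beta$; the concatenated control $u_{1}\oplus u_{2}$ belongs to $\mathcal{U}_{t,T}$, and inserting it into $J$ together with the tower property conditionally on $\mathcal{F}_{s}$ and the Markov property of the diffusion between impulses yields the claim.

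The main obstacle is the rigorous verification that the pasted strategy $\tilde{\beta}$ is genuinely nonanticipative in the sense of Definition~\ref{D:Strategy}, namely that for any two inputs $u^{1}\equiv u^{2}$ on $[\![t,\tau]\!]$ one has $\tilde{\beta}(u^{1})\equiv\tilde{\beta}(u^{2})$ on $[\![t,\tau]\!]$; the $\mathcal{F}_{s}$-measurable selector $k$ introduces a measurability issue that is resolved here by the finite-partition trick based on Proposition~\ref{P:Cont-x}, avoiding any appeal to a general measurable-selection theorem. The secondary delicate point is the treatment of impulses occurring exactly at the glueing instant $s$, which is precisely what Lemma~\ref{L:Cont-t} allows us to rule out.
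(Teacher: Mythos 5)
Your proposal is correct and follows essentially the same route as the paper: both use the Lipschitz continuity of $V^{-}$ and $J$ in the state variable (Proposition~\ref{P:Cont-x}) to build a countable partition of $\mathbb{R}^{n}$, select cell-wise $\varepsilon$-optimal continuation strategies in $\bar{\mathcal{B}}_{s,T}$ via Lemma~\ref{L:Cont-t} (which also disposes of impulses at the glueing instant $s$), and paste them with indicators of the cells to avoid measurable selection. The only difference is presentational: you spell out both inequalities and flag the nonanticipativity of the pasted strategy as the point to verify, whereas the paper carries out one inequality in detail and leaves the reverse as analogous.
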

\begin{proof}
We prove the dynamic programming principle only for $V^{-}$, the other case being analogous. Let $u\in\mathcal{U}_{t,T}$ and $\varepsilon>0$, then there exists a strategy $\beta^{1,\varepsilon}\in\mathcal{B}_{t,T}$ such that
\begin{align}
\label{E:DPPproof}
&\inf_{\beta\in\mathcal{B}_{t,T}}\sup_{u\in\mathcal{U}_{t,T}}\mathbb{E}\Big[\int_{t}^{s}f(X_{r}^{t,x;u,\beta(u)})\textup{d}r
-\sum_{m\geqslant1}c(\tau_{m},\xi_{m})\mathbbm{1}_{\{\tau_{m}\leqslant s\}}\prod_{\ell\geqslant1}\mathbbm{1}_{\{\tau_{m}\neq\rho_{\ell}(u)\}}+ \\
&\quad+\sum_{\ell\geqslant1}\chi(\rho_{\ell}(u),\eta_{\ell}(u))\mathbbm{1}_{\{\rho_{\ell}(u)\leqslant s\}}+V^{-}(s,X_{s}^{t,x;u,\beta(u)})\Big]
\geqslant\mathbb{E}\Big[\int_{t}^{s}f(X_{r}^{t,x;u,\beta^{1,\varepsilon}(u)})\textup{d}r \notag \\
&\quad-\sum_{m\geqslant1}c(\tau_{m},\xi_{m})\mathbbm{1}_{\{\tau_{m}\leqslant s\}}\prod_{\ell\geqslant1}\mathbbm{1}_{\{\tau_{m}\neq\rho_{\ell}^{1,\varepsilon}(u)\}}
+\sum_{\ell\geqslant1}\chi(\rho_{\ell}^{1,\varepsilon}(u),\eta_{\ell}^{1,\varepsilon}(u))\mathbbm{1}_{\{\rho_{\ell}^{1,\varepsilon}(u)\leqslant s\}}+ \notag \\
&\quad+V^{-}(s,X_{s}^{t,x;u,\beta^{1,\varepsilon}(u)})\Big]-\varepsilon. \notag
\end{align}
Now, from the regularity of $V^{-}$ and $J$ with respect to the state variable, see Proposition~\ref{P:Cont-x}, we deduce the existence of a strategy $\beta^{2,\varepsilon}\in\bar{\mathcal{B}}_{s,T}$ such that
\begin{equation}
\label{E:DPPproof2}
\mathbb{E}\big[V^{-}(s,X_{s}^{t,x;u,\beta^{1,\varepsilon}(u)})\big]\geqslant
\mathbb{E}\big[J\big(s,X_{s}^{t,x;u,\beta^{1,\varepsilon}(u)};u_{[s,T]},\beta^{2,\varepsilon}(u_{[s,T]})\big)\big]-\varepsilon,
\end{equation}
where $u_{[s,T]}$ is introduced in Definition \ref{D:Restriction}. Indeed, let $(A_{i})_{i\geqslant1}$ be a partition of $\mathbb{R}^{n}$ such that, thanks to the regularity of $V^{-}$ and $J$, given $x_{i}\in A_{i}$, then for all $y\in A_{i}$ we have
\[
J(s,x_{i};u_{[s,T]},v)\geqslant
J(s,y;u_{[s,T]},v)-\tfrac{1}{3}\varepsilon \qquad \text{ and } \qquad
V^{-}(s,y)\geqslant V^{-}(s,x_{i})-\tfrac{1}{3}\varepsilon,
\]
for every $v\in\bar{\mathcal{V}}_{s,T}$. Moreover, thanks to Lemma \ref{L:Cont-t}, for every $A_{i}$ there exists a strategy $\beta^{A_{i}}\in\bar{\mathcal{B}}_{s,T}$ such that
\[
V^{-}(s,x_{i})\geqslant J(s,x_{i};u_{[s,T]},\beta^{A_{i}}(u_{[s,T]}))-\tfrac{1}{3}\varepsilon.
\]
As a consequence, for every $y\in A_{i}$ the following inequality holds:
\[
V^{-}(s,y)\geqslant J(s,y;u_{[s,T]},\beta^{A_{i}}(u_{[s,T]}))-\varepsilon.
\]
Therefore, in our case, we have
\begin{align*}
&\mathbb{E}\Big[V^{-}(s,X_{s}^{t,x;u,\beta^{1,\varepsilon}(u)})\Big]=
\mathbb{E}\Big[\sum_{i\geqslant1}V^{-}(s,X_{s}^{t,x;u,\beta^{1,\varepsilon}(u)})\mathbbm{1}_{A_{i}}(X_{s}^{t,x;u,\beta^{1,\varepsilon}(u)})\Big]\geqslant \\
&\qquad\geqslant\mathbb{E}\Big[\sum_{i\geqslant1}J\big(s,X_{s}^{t,x;
u,\beta^{1,\varepsilon}(u)};u_{[s,T]},\beta^{A_{i}}(u_{[s,T]})\big)\mathbbm{1}_{A_{i}}(X_{s}^{t,x;u,\beta^{1,\varepsilon}(u_{[s,T]})})\Big]
-\varepsilon= \\
&\qquad=\mathbb{E}\Big[J\Big(s,X_{s}^{t,x;u,\beta^{1,\varepsilon}(u_{[s,T]})};
u_{[s,T]},\sum_{i=1}^{+\infty}\mathbbm{1}_{A_{i}}(X_{s}^{t,x;u,\beta^{1,\varepsilon}(u_{[s,T]})})\beta^{A_{i}}(u_{[s,T]})\Big)\Big]
-\varepsilon.
\end{align*}
We introduce the strategy $\beta^{2,\varepsilon}\in\bar{\mathcal{B}}_{s,T}$ given by
\[
\beta^{2,\varepsilon}(u):=\sum_{i=1}^{+\infty}\mathbbm{1}_{A_{i}}(X_{s}^{t,x;u,\beta^{1,\varepsilon}(u)})\beta^{A_{i}}(u_{[s,T]}).
\]
This means that if $\beta^{A_{i}}(u_{[s,T]})=\sum_{\ell\geqslant1}\eta_{\ell}^{A_{i}}(u_{[s,T]})\mathbbm{1}_{[\rho_{\ell}^{A_{i}}(u_{[s,T]}),T]}$, then $\beta^{2,\varepsilon}(u)=\sum_{\ell\geqslant1}$ $\eta_{\ell}^{2,\varepsilon}(u)\mathbbm{1}_{[\rho_{\ell}^{2,\varepsilon}(u),T]}$ is given by
\[
\rho_{\ell}^{2,\varepsilon}(u):=\sum_{i=1}^{+\infty}\mathbbm{1}_{A_{i}}(X_{s}^{t,x;u,\beta^{1,\varepsilon}(u)})\rho_{\ell}^{A_{i}}(u_{[s,T]})
\]
and
\[
\eta_{\ell}^{2,\varepsilon}(u):=\sum_{i=1}^{+\infty}\mathbbm{1}_{A^{i}}(X_{s}^{t,x;u,\beta^{1,\varepsilon}(u)})\eta_{\ell}^{A^{i}}(u_{[s,T]}).
\]
Finally, we define a new strategy $\beta^{\varepsilon}\in\mathcal{B}_{t,T}$, equal to $\beta^{1,\varepsilon}$ up to time $s$ and to $\beta^{2,\varepsilon}$ from time $s$ to $T$. More precisely, the definition of $\beta^{\varepsilon}$ is as follows:
Let $\beta^{1,\varepsilon}(u)=\sum_{\ell\geqslant1}\eta_{\ell}^{1,\varepsilon}(u)\mathbbm{1}_{[\rho_{\ell}^{1,\varepsilon}(u),T]}$,
$\beta^{2,\varepsilon}(u)=\sum_{\ell\geqslant1}\eta_{\ell}^{2,\varepsilon}(u)\mathbbm{1}_{[\rho_{\ell}^{2,\varepsilon}(u),T]}$ and
$\beta^{\varepsilon}(u)=\sum_{\ell\geqslant1}\eta_{\ell}^{\varepsilon}(u)$ $\mathbbm{1}_{[\rho_{\ell}^{\varepsilon}(u),T]}$, then
\[
\rho_{\ell}^{\varepsilon}(u):=\rho_{\ell}^{1,\varepsilon}(u)\mathbbm{1}_{\{\ell\leqslant\mu_{t,s}(\beta^{1,\varepsilon}(u))\}}
+\rho_{\ell-\mu_{t,s}(\beta^{1,\varepsilon}(u))}^{2,\varepsilon}(u)\mathbbm{1}_{\{\ell>\mu_{t,s}(\beta^{1,\varepsilon}(u))\}}
\]
and
\[
\eta_{\ell}^{\varepsilon}(u):=\eta_{\ell}^{1,\varepsilon}(u)\mathbbm{1}_{\{\ell\leqslant\mu_{t,s}(\beta^{1,\varepsilon}(u))\}}
+\eta_{\ell-\mu_{t,s}(\beta^{1,\varepsilon}(u))}^{2,\varepsilon}(u)\mathbbm{1}_{\{\ell>\mu_{t,s}(\beta^{1,\varepsilon}(u))\}},
\]
where $\mu_{t,s}(\beta^{1,\varepsilon}(u))$ is given by equation (\ref{E:mu}). Therefore, from (\ref{E:DPPproof}) and (\ref{E:DPPproof2}) we get
\begin{align*}
&\mathbb{E}\Big[\int_{t}^{s}f(X_{r}^{t,x;u,\beta^{1,\varepsilon}(u)})\textup{d}r
-\sum_{m\geqslant1}c(\tau_{m},\xi_{m})\mathbbm{1}_{\{\tau_{m}\leqslant s\}}\prod_{\ell\geqslant1}\mathbbm{1}_{\{\tau_{m}\neq\rho_{\ell}^{1,\varepsilon}(u)\}}+ \\
&\hspace{1.5cm}+\sum_{\ell\geqslant1}\chi(\rho_{\ell}^{1,\varepsilon}(u),\eta_{\ell}^{1,\varepsilon}(u))\mathbbm{1}_{\{\rho_{\ell}^{1,\varepsilon}(u)\leqslant s\}}+V^{-}(s,X_{s}^{t,x;u,\beta^{1,\varepsilon}(u)})\Big]-\varepsilon\geqslant \\
&\hspace{1.5cm}\geqslant\mathbb{E}\Big[\int_{t}^{s}f(X_{r}^{t,x;u,\beta^{1,\varepsilon}(u)})\textup{d}r-\sum_{m\geqslant1}c(\tau_{m},\xi_{m})\mathbbm{1}_{\{\tau_{m}\leqslant s\}}\prod_{\ell\geqslant1}\mathbbm{1}_{\{\tau_{m}\neq\rho_{\ell}^{1,\varepsilon}(u)\}}+ \\
&\hspace{1.5cm}+\sum_{\ell\geqslant1}\chi(\rho_{\ell}^{1,\varepsilon}(u),\eta_{\ell}^{1,\varepsilon}(u))\mathbbm{1}_{\{\rho_{\ell}^{1,\varepsilon}(u)\leqslant s\}}+J(s,X_{s}^{t,x;u,\beta^{1,\varepsilon}(u)};u_{[s,T]},\beta^{2,\varepsilon}(u))\Big] \\
&\hspace{1.5cm}-2\varepsilon=J(t,x;u,\beta^{\varepsilon}(u))-2\varepsilon.
\end{align*}
We can now easily deduce that the following inequality holds:
\begin{align*}
V^{-}(t,x)\leqslant\inf_{\beta\in\mathcal{B}_{t,T}}\sup_{u\in\mathcal{U}_{t,T}}\mathbb{E}\Big[\int_{t}^{s}f(X_{r}^{t,x;u,\beta(u)})\textup{d}r&
+\sum_{\ell\geqslant1}\chi(\rho_{\ell}(u),\eta_{\ell}(u))\mathbbm{1}_{\{\rho_{\ell}(u)\leqslant s\}} \\
&\hspace{-4cm}-\sum_{m\geqslant1}c(\tau_{m},\xi_{m})\mathbbm{1}_{\{\tau_{m}\leqslant s\}}\prod_{\ell\geqslant1}\mathbbm{1}_{\{\tau_{m}\neq\rho_{\ell}(u)\}}
+V^{-}(s,X_{s}^{t,x;u,\beta(u)})\Big].
\end{align*}
In a similar way we can prove the reverse inequality, hence deducing the thesis.
\end{proof}

Now we prove the following corollary of the dynamic programming principle, which is concerned with the DPP for $s=t$. In particular, we prove that we can neglect multiple impulses, thanks to conditions (\ref{E:c}) and (\ref{E:chi}). Corollary \ref{C:DPPs=t} will be useful to prove that the two value functions are viscosity solutions to the HJBI equation.
\begin{corollary}
\label{C:DPPs=t}
Under assumptions \textup{\textbf{(H$_{b,\sigma}$)}}, \textup{\textbf{(H$_{f,g}$)}} and \textup{\textbf{(H$_{c,\chi}$)}}, given $(t,x)\in[0,T)\times\mathbb{R}^{n}$, we have:
\begin{align*}
V^{-}(t,x)=\inf_{\rho\in\mathcal{T}_{t,+\infty},\eta\in\mathcal{F}_{\rho}}
\sup_{\tau\in\mathcal{T}_{t,+\infty},\xi\in\mathcal{F}_{\tau}}
\mathbb{E}\Big[&-c(t,\xi)\mathbbm{1}_{\{\tau=t\}}\mathbbm{1}_{\{\rho=+\infty\}}+ \\
&\hspace{-2.5cm}+\chi(t,\eta)\mathbbm{1}_{\{\rho=t\}}+V^{-}(t,X_{t}^{t,x;\xi\mathbbm{1}_{[\tau,T]},\eta\mathbbm{1}_{[\rho,T]}})\Big],
\end{align*}
where $\mathcal{T}_{t,+\infty}$ is the set of $\mathbb{F}$-stopping times with values in $\{t,+\infty\}$. An analogous statement holds for the upper value function $V^{+}$.
\end{corollary}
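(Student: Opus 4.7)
The plan is to specialize the dynamic programming principle of Theorem~\ref{T:DPP} to $s=t$ and then to argue that, on the resulting single-instant subproblem, each player may be restricted to at most one impulse at time $t$. Setting $s=t$ in (\ref{E:DPP-}), the integral $\int_t^t f(X_r)\,\mathrm{d}r$ vanishes and every indicator $\mathbbm{1}_{\{\tau_m\leq s\}}$, $\mathbbm{1}_{\{\rho_\ell(u)\leq s\}}$ collapses to $\mathbbm{1}_{\{\tau_m=t\}}$, $\mathbbm{1}_{\{\rho_\ell(u)=t\}}$; moreover, by (\ref{E:X}), the state $X_t^{t,x;u,\beta(u)}$ is determined solely by the impulses of $u$ and $\beta(u)$ that occur at $t$, with player II's impulses taking precedence whenever $\tau_m=\rho_\ell(u)=t$.

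The key step is to collapse multiple impulses at $t$ into a single aggregated one per player. For player II, iterating (\ref{E:chi}) yields $\sum_\ell \chi(t,\eta_\ell(u))\mathbbm{1}_{\{\rho_\ell(u)=t\}} \geq \chi(t,\eta)$ with $\eta := \sum_{\rho_\ell(u)=t}\eta_\ell(u)$, while $\eta$ reproduces the same $X_t$; since player II is the minimizer, replacing $\beta$ by a strategy performing only the single impulse $\eta$ at $t$ cannot increase $\sup_u \mathbb{E}[\cdots]$, so the outer infimum is unchanged if we restrict to single-impulse-at-$t$ strategies. For player I, in the subcase where $\beta(u)$ has no impulse at $t$, applying (\ref{E:c}) with $z=0\in\mathscr{V}$ and using $\chi(t,0)+h(t)>0$ (which follows from \textbf{(H$_{c,\chi}$)}) gives the subadditivity $c(t,y_1+y_2)\leq c(t,y_1)+c(t,y_2)$, which iterates to $c(t,\sum_{\tau_m=t}\xi_m)\leq \sum_{\tau_m=t}c(t,\xi_m)$, so collapsing player I's impulses at $t$ is beneficial for the maximizer; in the complementary subcase where $\beta(u)$ does impulse at $t$, the cancellation convention in (\ref{E:X}) renders player I's impulses at $t$ irrelevant, so they may simply be discarded. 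That player I's reduction really does not alter the $\sup$ follows because, by path-identification of impulse controls and nonanticipativity, $\beta(u)$ is unchanged when $u$ is replaced by its single-impulse aggregate.

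It remains to match the restricted infimum-supremum with the one in the statement. A single-impulse-at-$t$ control of player I has the form $\xi\mathbbm{1}_{[\tau,T]}$ with $\tau\in\mathcal{T}_{t,+\infty}$ and $\xi\in\mathcal{F}_\tau$, and analogously for player II; by nonanticipativity on the degenerate graph $[\![t,t]\!]$ the value of $\beta(u)$ at $t$ can depend on $u$ only through its value at $t$, so the Elliott-Kalton infimum over $\beta$ collapses to the infimum over $\mathcal{F}_t$-measurable pairs $(\rho,\eta)$ appearing in the statement (and symmetrically the supremum over $u$ collapses to the supremum over $(\tau,\xi)$). Inspecting the four cases $(\tau,\rho)\in\{t,+\infty\}^2$, applying the cancellation convention in (\ref{E:X}), and noting that the $-c(t,\xi)$ contribution survives only on $\{\tau=t,\rho=+\infty\}$ while $\chi(t,\eta)$ contributes on $\{\rho=t\}$ irrespective of $\tau$, one recovers exactly the integrand of the statement; the argument for $V^+$ is entirely symmetric. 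I expect the main obstacle to be this last reduction: one must rigorously verify that the Stackelberg-type advantage implicit in the Elliott-Kalton formulation (where player II sees player I's control) disappears at a single instant, a fact which should follow from the degeneracy of nonanticipativity on $[\![t,t]\!]$ together with the observation that no new information beyond $\mathcal{F}_t$ is available to either player at time $t$.
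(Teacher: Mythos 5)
Your overall route---specialize the dynamic programming principle of Theorem \ref{T:DPP} to $s=t$, then aggregate each player's time-$t$ impulses into a single one---is the paper's route, and your aggregation step matches the paper's computation: subadditivity of $c$ extracted from (\ref{E:c}), the inequality $\sum_{\ell}\chi(t,\eta_{\ell})\geqslant\chi(t,\sum_{\ell}\eta_{\ell})$ from (\ref{E:chi}), and the invariance of $X_{t}$ and of the cancellation indicator under aggregation. This is exactly how the paper obtains the inequality $V^{-}(t,x)\leqslant\inf_{\rho,\eta}\sup_{\tau,\xi}\mathbb{E}[\cdots]$, by fixing a \emph{constant} strategy $\beta(u)=\eta\mathbbm{1}_{[\rho,T]}$ and replacing a general $u$ by its single-impulse aggregate $\hat{u}=\xi\mathbbm{1}_{[\tau,T]}$.

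The gap is in your final reduction. You assert that nonanticipativity on the degenerate interval $[\![t,t]\!]$ forces the infimum over Elliott--Kalton strategies to collapse to an infimum over fixed pairs $(\rho,\eta)$. This is a non sequitur: at $\sigma=t$, nonanticipativity says at most that $\beta(u)$ at time $t$ depends on $u$ only through the value of $u$ at time $t$ (and, under the a.e.\ identification of controls, it says nothing at all); it does \emph{not} say that $\beta(u)(t)$ is independent of $u$. A strategy of the form ``impulse with $\eta^{*}$ at $t$ if and only if player I impulses at $t$'' is perfectly nonanticipative, and this reactivity is precisely the Stackelberg advantage you must rule out---your own closing remark identifies the obstacle but the reason you offer does not remove it. The advantage does vanish, but for a structural reason: by the override convention in (\ref{E:X}) and (\ref{E:J}), whenever player II acts at $t$, both the state displacement and the cost of player I's simultaneous impulse are annulled, so the payoff of the option ``act with $\eta$'' equals $\chi(t,\eta)+V^{-}(t,x+\eta)$ and is independent of $u$; writing $A$ for its infimum over $\eta$ and $f(u)$ for the ``do not act'' payoff, the best reactive strategy yields $\sup_{u}\min(A,f(u))=\min(A,\sup_{u}f(u))$, which is attained by a constant strategy. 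Without this (or an equivalent) argument, the inequality $V^{-}(t,x)\geqslant\inf_{\rho,\eta}\sup_{\tau,\xi}\mathbb{E}[\cdots]$ remains unproved. (In fairness, the paper writes out only the $\leqslant$ direction and dismisses the reverse as ``similar''; but the justification you propose for that reverse step is the one that would fail.)
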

\begin{proof}
We make the proof only for $V^{-}$, the other case being analogous. Consider the dynamic programming principle for $V^{-}$ with $s=t$:
\begin{align*}
V^{-}(t,x):=\inf_{\beta\in\mathcal{B}_{t,T}}\sup_{u\in\mathcal{U}_{t,T}}\mathbb{E}\Big[&
-\sum_{m\geqslant1}c(t,\xi_{m})\mathbbm{1}_{\{\tau_{m}=t\}}\prod_{\ell\geqslant1}\mathbbm{1}_{\{\tau_{m}\neq\rho_{\ell}(u)\}}+ \notag \\
&\hspace{1cm}+\sum_{\ell\geqslant1}\chi(t,\eta_{\ell}(u))\mathbbm{1}_{\{\rho_{\ell}(u)=t\}}+V^{-}(t,X_{t}^{t,x;u,\beta(u)})\Big].
\end{align*}
Let $\rho\in\mathcal{T}_{t,+\infty}$ and $\eta\in\mathcal{F}_{\rho}$, then consider the strategy $\beta(u)=\eta\mathbbm{1}_{[\rho,T]}$, for every $u\in\mathcal{U}_{t,T}$. Now fix $u=\sum_{m\geqslant1}\xi_{m}\mathbbm{1}_{[\tau_{m},T]}\in\mathcal{U}_{t,T}$ and define the impulse control $\hat{u}=\xi\mathbbm{1}_{[\tau,T]}$, where $\xi$ and $\tau$ are given by:
\[
\xi=\sum_{m\geqslant1}\xi_{m}\mathbbm{1}_{\{\tau_{m}=t\}},
\qquad\qquad
\tau=t\Big(1-\prod_{m\geqslant1}\mathbbm{1}_{\{\tau_{m}>t\}}\Big)+(+\infty)\prod_{m\geqslant1}\mathbbm{1}_{\{\tau_{m}>t\}}.
\]
Then $\tau\in\mathcal{T}_{t,+\infty}$ and $\xi\in\mathcal{F}_{\tau}$. Moreover $X_{t}^{t,x;u,\beta(u)}=X_{t}^{t,x;\xi\mathbbm{1}_{[\tau,T]},\eta\mathbbm{1}_{[\rho,T]}}$, $\mathbb{P}$-a.s.. Therefore, using condition (\ref{E:c}), we find
\begin{align*}
&\mathbb{E}\Big[-\sum_{m\geqslant1}c(t,\xi_{m})\mathbbm{1}_{\{\tau_{m}=t\}}\mathbbm{1}_{\{\rho=+\infty\}}+\chi(t,\eta)\mathbbm{1}_{\{\rho=t\}}
+V^{-}(t,X_{t}^{t,x;u,\eta\mathbbm{1}_{[\rho,T]}})\Big]\leqslant \\
&\quad\leqslant\mathbb{E}\Big[-c(t,\xi)\mathbbm{1}_{\{\tau=t\}}\mathbbm{1}_{\{\rho=+\infty\}}+\chi(t,\eta)\mathbbm{1}_{\{\rho=t\}}
+V^{-}(t,X_{t}^{t,x;\xi\mathbbm{1}_{[\tau,T]},\eta\mathbbm{1}_{[\rho,T]}})\Big].
\end{align*}
As a consequence we have
\begin{align*}
V^{-}(t,x)\leqslant\inf_{\rho\in\mathcal{T}_{t,+\infty},\eta\in\mathcal{F}_{\rho}}
\sup_{\tau\in\mathcal{T}_{t,+\infty},\xi\in\mathcal{F}_{\tau}}
\mathbb{E}\Big[&-c(t,\xi)\mathbbm{1}_{\{\tau=t\}}\mathbbm{1}_{\{\rho=+\infty\}}+ \\
&\hspace{-2.5cm}+\chi(t,\eta)\mathbbm{1}_{\{\rho=t\}}+V^{-}(t,X_{t}^{t,x;\xi\mathbbm{1}_{[\tau,T]},\eta\mathbbm{1}_{[\rho,T]}})\Big].
\end{align*}
In a similar way we can prove the reverse inequality, therefore deducing the thesis.
\end{proof}

To prove that the value functions are viscosity solutions to the HJBI equation, we need the dynamic programming principle also for stopping times which assume a countable number of values. This result is a simple extension of the dynamic programming principle for deterministic times and it is presented in the following lemma.
\begin{lemma}
\label{L:DPPtau}
Under assumptions \textup{\textbf{(H$_{b,\sigma}$)}}, \textup{\textbf{(H$_{f,g}$)}} and \textup{\textbf{(H$_{c,\chi}$)}}, given $0\leqslant t\leqslant s<T$, $x\in\mathbb{R}^{n}$ and a $[t,s]$-valued $\mathbb{F}$-stopping time $\tau$, which assumes a countable number of values, we have:
\begin{align*}
V^{-}(t,x):=\inf_{\beta\in\mathcal{B}_{t,T}}\sup_{u\in\mathcal{U}_{t,T}}\mathbb{E}\Big[\int_{t}^{\tau}f(X_{r}^{t,x;u,\beta(u)})\textup{d}r&
+\sum_{\ell\geqslant1}\chi(\rho_{\ell}(u),\eta_{\ell}(u))\mathbbm{1}_{\{\rho_{\ell}(u)\leqslant\tau\}} \\
&\hspace{-4cm}-\sum_{m\geqslant1}c(\tau_{m},\xi_{m})\mathbbm{1}_{\{\tau_{m}\leqslant \tau\}}\prod_{\ell\geqslant1}\mathbbm{1}_{\{\tau_{m}\neq\rho_{\ell}(u)\}}+V^{-}(\tau,X_{\tau}^{t,x;u,\beta(u)})\Big].
\end{align*}
An analogous statement holds for the upper value function $V^{+}$.
\end{lemma}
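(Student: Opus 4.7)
The plan is to enumerate the countable values $\{s_i\}_{i\geqslant 1}\subset[t,s]$ attained by $\tau$, so that $A_i:=\{\tau=s_i\}\in\mathcal{F}_{s_i}$ yields an $\mathbb{F}$-measurable partition of $\Omega$, and then to reduce the stopping-time DPP to countably many applications of Theorem~\ref{T:DPP} at the deterministic times $s_i$, glued along $(A_i)_{i\geqslant 1}$. I treat only $V^-$, the case of $V^+$ being symmetric. Denote by $\Phi_\sigma(u,v)$ the expression inside the expectation on the right-hand side of the claimed identity when $\tau$ is replaced by the time $\sigma$, so that the target reads $V^-(t,x)=\inf_\beta\sup_u\mathbb{E}[\Phi_\tau(u,\beta(u))]$.

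For the inequality ``$\leqslant$'': fix $\varepsilon>0$. By Theorem~\ref{T:DPP} applied at each $s_i$, choose a strategy $\beta^{i,\varepsilon}\in\mathcal{B}_{t,T}$ that is $\varepsilon 2^{-i}$-optimal, i.e.\ $\sup_u\mathbb{E}[\Phi_{s_i}(u,\beta^{i,\varepsilon}(u))]\leqslant V^-(t,x)+\varepsilon 2^{-i}$. Paste these strategies into a single $\beta^\varepsilon$ by declaring, on the event $A_i$, that $\beta^\varepsilon(u)\equiv\beta^{i,\varepsilon}(u)$ on $[t,T]$; concretely, multiply the action times and actions of $\beta^{i,\varepsilon}(u)$ by $\mathbbm{1}_{A_i}$ and sum over $i$. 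Since $A_i\in\mathcal{F}_{s_i}$, this produces an admissible, nonanticipative strategy. Splitting the expectation along $(A_i)$, on each $A_i$ one replaces $\tau$ by $s_i$, and summing the geometric $\varepsilon 2^{-i}$-errors gives $\sup_u\mathbb{E}[\Phi_\tau(u,\beta^\varepsilon(u))]\leqslant V^-(t,x)+\varepsilon$.

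For the reverse inequality ``$\geqslant$'': fix $\beta\in\mathcal{B}_{t,T}$ and $\varepsilon>0$. By Theorem~\ref{T:DPP}, $V^-(t,x)\leqslant\sup_u\mathbb{E}[\Phi_{s_i}(u,\beta(u))]$ for every $i$, so one may select $u^{i,\varepsilon}\in\mathcal{U}_{t,T}$ with $\mathbb{E}[\Phi_{s_i}(u^{i,\varepsilon},\beta(u^{i,\varepsilon}))]\geqslant V^-(t,x)-\varepsilon 2^{-i}$. Paste these into a single $u^\varepsilon\in\mathcal{U}_{t,T}$ along the same partition $(A_i)$. Nonanticipativity of $\beta$ from Definition~\ref{D:Strategy} ensures that on $A_i$ the image $\beta(u^\varepsilon)$ coincides on $[t,s_i]$ with $\beta(u^{i,\varepsilon})$, so splitting the expectation along $(A_i)$ and summing gives $\sup_u\mathbb{E}[\Phi_\tau(u,\beta(u))]\geqslant V^-(t,x)-\varepsilon$; taking the infimum over $\beta$ and letting $\varepsilon\downarrow 0$ closes the argument.

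The main obstacle is the bookkeeping that ensures the pasted objects are legitimate admissible controls and nonanticipative strategies. One must verify that each action time obtained by multiplication by $\mathbbm{1}_{A_i}$ is still an $\mathbb{F}$-stopping time (which works thanks to $A_i\in\mathcal{F}_{s_i}$ and the fact that on $A_i^c$ the corresponding impulse is declared absent), and that the implication characterizing nonanticipative strategies in Definition~\ref{D:Strategy} survives the pasting. Admissibility, i.e.\ finiteness of the average number of impulses for the pasted objects, follows from Proposition~\ref{P:VBounded} combined with the geometric choice $\varepsilon 2^{-i}$, which prevents accumulation of impulses across the countably many pasted pieces. Once these verifications are in place the remaining computation is routine and parallels the proof of Theorem~\ref{T:DPP}.
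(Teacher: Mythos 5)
Your decomposition applies the events $A_i=\{\tau=s_i\}$ to the \emph{whole} dynamic programming identity, and this breaks down in two places. First, the pasting $\beta^{\varepsilon}(u)=\sum_i\beta^{i,\varepsilon}(u)\mathbbm{1}_{A_i}$ (and likewise $u^{\varepsilon}=\sum_i u^{i,\varepsilon}\mathbbm{1}_{A_i}$) does not produce a legitimate strategy or control: an action time $\sigma$ of $\beta^{i,\varepsilon}(u)$ may satisfy $\sigma<s_i$ with positive probability, and the modified time equal to $\sigma$ on $A_i$ and $+\infty$ on $A_i^c$ is not an $\mathbb{F}$-stopping time, since $\{\sigma\leqslant r\}\cap A_i$ need not lie in $\mathcal{F}_r$ for $r<s_i$ (one only has $A_i\in\mathcal{F}_{s_i}$). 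Whether an impulse occurs before time $\tau$ cannot be made to depend on the not-yet-revealed value of $\tau$, so your claimed verification ``which works thanks to $A_i\in\mathcal{F}_{s_i}$'' fails precisely for the impulses on $[t,\tau)$. Second, even granting the pasting, your near-optimality is in full expectation, and that does not localize: from $\mathbb{E}[\Phi_{s_i}(u,\beta^{i,\varepsilon}(u))]\leqslant V^-(t,x)+\varepsilon 2^{-i}$ for each $i$ one cannot conclude $\sum_i\mathbb{E}[\Phi_{s_i}(u,\beta^{i,\varepsilon}(u))\mathbbm{1}_{A_i}]\leqslant V^-(t,x)+\varepsilon$; a random variable can be large on $A_i$ and compensatingly small on $A_i^c$ while keeping its mean close to $V^-(t,x)$, so the geometric series does not close the estimate. (The derivation of admissibility of the pasted objects from Proposition \ref{P:VBounded} is also unsubstantiated, but this is secondary.)

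The paper's proof avoids both problems by decomposing only the \emph{continuation} term. It keeps a single strategy $\beta^{1,\varepsilon}$ that is $\varepsilon$-optimal for the functional $\Phi_\tau$ itself --- no decomposition is needed there, since $\Phi_\tau$ is perfectly well defined for a stopping time --- and writes $\mathbb{E}[V^{-}(\tau,X_{\tau})]=\sum_j\mathbb{E}[V^{-}(t_j,X_{t_j})\mathbbm{1}_{B_j}]$ with $B_j=\{\tau=t_j\}$. On each $B_j$ it invokes the space-partition argument from the proof of Theorem \ref{T:DPP}, which gives the \emph{pointwise} inequality $V^{-}(t_j,y)\geqslant J(t_j,y;u_{[t_j,T]},\beta^{A_i}(u_{[t_j,T]}))-\varepsilon$ for $y\in A_i$; a pointwise inequality survives restriction to any event, and the glued continuation strategy $\beta^{2,\varepsilon}=\sum_j\beta^{2,\varepsilon,j}\mathbbm{1}_{B_j}$ has all its action times $\geqslant t_j$ on $B_j$ with $B_j\in\mathcal{F}_{t_j}$, so adaptedness is preserved. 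Your argument would need to be restructured along these lines.
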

\begin{proof} Let $\tau=\sum_{j\geqslant1}t_{j}\mathbbm{1}_{B_{j}}$, with $t_{j}\in[t,s]$ and $B_{j}\in\mathcal{F}_{t_{j}}$. The proof is completely similar to the proof of the dynamic programming principle for deterministic times (Theorem \ref{T:DPP}). Therefore we focus only on the main point, that corresponds to inequality (\ref{E:DPPproof2}), which now becomes:
\begin{align*}
\mathbb{E}\big[V^{-}(\tau,X_{\tau}^{t,x;u,\beta^{1,\varepsilon}(u)})\big]
&=\mathbb{E}\Big[\sum_{j\geqslant1}V^{-}(\tau,X_{\tau}^{t,x;u,\beta^{1,\varepsilon}(u)})\mathbbm{1}_{B_{j}}\Big]\geqslant \\
&\geqslant\mathbb{E}\Big[\sum_{j\geqslant1}J\big(t_{j},X_{t_{j}}^{t,x;u,\beta^{1,\varepsilon}(u)};u_{[t_{j},T]},\beta^{2,\varepsilon,j}(u)\big)\mathbbm{1}_{B_{j}}\Big]-\varepsilon= \\
&=\mathbb{E}\Big[J\Big(\tau,X_{\tau}^{t,x;u,\beta^{1,\varepsilon}(u)};u_{[\tau,T]},\sum_{j\geqslant1}\beta^{2,\varepsilon,j}(u)\mathbbm{1}_{B_{j}}\Big)\Big]-\varepsilon,
\end{align*}
where $\beta^{2,\varepsilon,j}\in\bar{\mathcal{B}}_{t_{j},T}$, for every $j\geqslant1$. We define the strategy $\beta^{2,\varepsilon}:=\sum_{j\geqslant1}\beta^{2,\varepsilon,j}\mathbbm{1}_{B_{j}}$ and we conclude as in the proof of Theorem \ref{T:DPP}.
\end{proof}

We end this section with a technical lemma, which will be useful to prove that the two value functions satisfy, in the viscosity sense, the terminal condition to the HJBI equation.
\begin{lemma}
\label{L:DPPT}
Suppose that assumptions \textup{\textbf{(H$_{b,\sigma}$)}}, \textup{\textbf{(H$_{f,g}$)}} and \textup{\textbf{(H$_{c,\chi}$)}} hold true, then for every $(t,x)\in[0,T)\times\mathbb{R}^{n}$ we have
\begin{align}
\label{E:VtildeDPP}
V^{-}(t,x)=&\inf_{\rho\in\mathcal{T}_{t,+\infty},\eta\in\mathcal{F}_{\rho}}
\sup_{\tau\in\mathcal{T}_{t,+\infty},\xi\in\mathcal{F}_{\tau}}
\mathbb{E}\Big[\Big(-c(t,\xi)\mathbbm{1}_{\{\tau=t,\rho=+\infty\}}+ \\
&+\chi(t,\eta)\mathbbm{1}_{\{\rho=t\}}
+V^{-}(t,X_{t}^{t,x;\xi\mathbbm{1}_{[\tau,T]},\eta\mathbbm{1}_{[\rho,T]}})\Big)\big(1-\mathbbm{1}_{\{\tau=+\infty,\rho=+\infty\}}\big)+ \notag \\
&+\Big(\int_{t}^{T}f(X_{r}^{t,x;u_{0},v_{0}})\textup{d}r+g(X_{T}^{t,x;u_{0},v_{0}})\Big)\mathbbm{1}_{\{\tau=+\infty,\rho=+\infty\}}\Big], \notag
\end{align}
where $u_{0}$ and $v_{0}$ are the controls with no impulses. An analogous statement holds true for the upper value function $V^{+}$.
\end{lemma}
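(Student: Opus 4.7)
My plan is to derive \eqref{E:VtildeDPP} from Corollary~\ref{C:DPPs=t} by a careful analysis of the event $A:=\{\tau=+\infty,\rho=+\infty\}$ on which no impulse is applied at time $t$. Denote by $\tilde V^-(t,x)$ the right-hand side of \eqref{E:VtildeDPP}, by $Q$ the integrand of Corollary~\ref{C:DPPs=t}, and by $\tilde Q$ that of \eqref{E:VtildeDPP}. On $A$ one has $X_{t}^{t,x;\xi\mathbbm{1}_{[\tau,T]},\eta\mathbbm{1}_{[\rho,T]}}=x$ almost surely, so $Q=V^-(t,x)$ and $\tilde Q=J_{0}:=\int_{t}^{T}f(X_{r}^{t,x;u_{0},v_{0}})\,dr+g(X_{T}^{t,x;u_{0},v_{0}})$ there. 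Since $A\in\mathcal{F}_{t}$ while $J_{0}$, being a functional of the increments $W_{s}-W_{t}$, $s\in[t,T]$, is independent of $\mathcal{F}_{t}$ with $\mathbb{E}[J_{0}]=J(t,x;u_{0},v_{0})$, I obtain the basic identity
\[
\mathbb{E}[\tilde Q]=\mathbb{E}[Q]+\bigl(J(t,x;u_{0},v_{0})-V^-(t,x)\bigr)\,\mathbb{P}(A),
\]
valid for every admissible choice of $(\rho,\eta,\tau,\xi)$.

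With this identity in hand, the strategy is to establish $V^-(t,x)=\tilde V^-(t,x)$ by two inequalities, each obtained by mimicking the proof of Corollary~\ref{C:DPPs=t} at the level of $\varepsilon$-optimisers. For $\tilde V^-(t,x)\leq V^-(t,x)$, I fix $\varepsilon>0$ and invoke Corollary~\ref{C:DPPs=t} to pick $(\rho^{\varepsilon},\eta^{\varepsilon})$ with $\sup_{\tau,\xi}\mathbb{E}[Q]\leq V^-(t,x)+\varepsilon$; I then modify this pair so that for every admissible $(\tau,\xi)$ one has $\mathbb{P}(A)=0$, by replacing $\rho^{\varepsilon}=+\infty$ with $\rho=t$ on the relevant set, combined with an action $\eta$ approximating $\mathcal{H}_{\inf}^{\chi}V^-(t,x)$. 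Condition~\eqref{E:chi} controls the extra cost of this insertion so that the new inner supremum still bounds $V^-(t,x)+O(\varepsilon)$, and the identity then reduces to $\mathbb{E}[\tilde Q]=\mathbb{E}[Q]$, giving $\tilde V^-(t,x)\leq V^-(t,x)+O(\varepsilon)$. The reverse inequality is symmetric: for any $(\rho,\eta)$ I take $\varepsilon$-optimal $(\tau^{\varepsilon},\xi^{\varepsilon})$ from Corollary~\ref{C:DPPs=t} and modify $\tau^{\varepsilon}$ from $+\infty$ to $t$ with action close to a maximiser of $\mathcal{H}_{\sup}^{c}V^-$, invoking condition~\eqref{E:c} to absorb the resulting cost. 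Letting $\varepsilon\to 0$ concludes.

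I expect the main obstacle to be the modification step in both directions: one must verify quantitatively that inserting or removing an impulse at time $t$ changes the inner supremum, respectively the inner maximiser, by at most $O(\varepsilon)$. The subadditivity assumptions~\eqref{E:c} and~\eqref{E:chi} are tailored precisely for this purpose, as they allow absorbing the cost of an extra impulse at time $t$ into that of the opposing impulse at the price of the strictly positive quantity $h(t)$; combined with the continuity of $V^-$ from Propositions~\ref{P:Cont-x} and~\ref{P:Cont-t}, this yields the quantitative control needed to pass to the limit $\varepsilon\to 0$ and conclude the equality claimed in~\eqref{E:VtildeDPP}.
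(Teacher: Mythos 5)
Your reduction of Lemma~\ref{L:DPPT} to Corollary~\ref{C:DPPs=t} does not close. The identity $\mathbb{E}[\tilde Q]=\mathbb{E}[Q]+\big(J(t,x;u_{0},v_{0})-V^{-}(t,x)\big)\mathbb{P}(A)$ is correct, but the correction term has no favourable sign and is of order one, so everything hinges on the modification step, and that step fails. To guarantee $\mathbb{P}(A)=0$ against \emph{every} admissible $(\tau,\xi)$ you must take $\rho=t$ almost surely; then player II's impulse overrides player I's, the integrand becomes $\chi(t,\eta)+V^{-}(t,x+\eta)$ identically, and the resulting bound is $\tilde V^{-}(t,x)\leqslant\mathcal{H}_{\inf}^{\chi}V^{-}(t,x)+O(\varepsilon)$. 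By Lemma~\ref{L:Bounds} one only has $V^{-}\leqslant\mathcal{H}_{\inf}^{\chi}V^{-}$, typically with strict inequality, so this does not give $\tilde V^{-}\leqslant V^{-}$. Symmetrically, forcing $\tau=t$ on the branch $\rho=+\infty$ in the reverse direction produces $\mathcal{H}_{\sup}^{c}V^{-}(t,x)$, which sits \emph{below} $V^{-}(t,x)$, again by an uncontrolled order-one amount. Conditions~(\ref{E:c}) and~(\ref{E:chi}) cannot absorb these losses: they are subadditivity conditions bounding the cost of \emph{merging} two impulses into one, not the cost of \emph{inserting} an impulse where none was played; an inserted impulse costs at least $\inf\chi>0$ (resp.\ $\inf c>0$), a constant independent of $\varepsilon$, and on the set $\{\rho^{\varepsilon}=+\infty\}$ there is no pre-existing impulse of player~II for~(\ref{E:chi}) to act on. Continuity of $V^{-}$ does not help either, since $x$ and $x+\eta$ are a fixed distance apart.

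The paper does not pass through Corollary~\ref{C:DPPs=t} at all. It returns to the definition $V^{-}=\inf_{\beta}\sup_{u}J$ and re-runs the concatenation argument of Theorem~\ref{T:DPP}: it fixes an $\varepsilon$-optimal $(\rho^{1,\varepsilon},\eta^{1,\varepsilon})$ for the right-hand side of~(\ref{E:VtildeDPP}), replaces the continuation value $V^{-}(t,X_{t}^{t,x;\,\cdot\,,\,\cdot\,})$ by an $\varepsilon$-optimal payoff $J$ via the partition-and-pasting construction, and then builds a control $u$ and a strategy $\beta^{\varepsilon}$ that are \emph{explicitly defined to coincide with the no-impulse pair} $(u_{0},v_{0})$ on the event $\{\tau=+\infty,\rho^{1,\varepsilon}=+\infty\}$. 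The term $\int_{t}^{T}f(X_{r}^{t,x;u_{0},v_{0}})\,\textup{d}r+g(X_{T}^{t,x;u_{0},v_{0}})$ then arises as the actual payoff of the constructed pair on that event and is matched term by term with $J(t,x;u,\beta^{\varepsilon}(u))$; it is never compared with $V^{-}(t,x)$, so no cost manipulation is required there. To repair your argument you would have to treat the event $A$ by this kind of explicit control-and-strategy construction rather than by perturbing $\rho$ and $\tau$ at time $t$.
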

\begin{proof}
We make the proof only for $V^{-}$, the other case being analogous. Let $\tau\in\mathcal{T}_{t,+\infty}$, $\xi\in\mathcal{F}_{\tau}$ and $\varepsilon>0$, then there exist $\rho^{1,\varepsilon}\in\mathcal{T}_{t,+\infty}$ and $\eta^{1,\varepsilon}\in\mathcal{F}_{\rho^{1,\varepsilon}}$, such that the right hand side of equation (\ref{E:VtildeDPP}) is greater than or equal to the following expression:
\begin{align}
\label{E:DPPTproof}
&\mathbb{E}\Big[\Big(-c(t,\xi)\mathbbm{1}_{\{\tau=t,\rho^{1,\varepsilon}=+\infty\}}
+V^{-}(t,X_{t}^{t,x;\xi\mathbbm{1}_{[\tau,T]},\eta^{1,\varepsilon}\mathbbm{1}_{[\rho^{1,\varepsilon},T]}})+ \\
&\qquad\qquad+\chi(t,\eta^{1,\varepsilon})\mathbbm{1}_{\{\rho^{1,\varepsilon}=t\}}\Big)\big(1-\mathbbm{1}_{\{\tau=+\infty,\rho^{1,\varepsilon}=+\infty\}}\big)
+\Big(\int_{t}^{T}f(X_{r}^{t,x;u_{0},v_{0}})\textup{d}r+ \notag \\
&\qquad\qquad+g(X_{T}^{t,x;u_{0},v_{0}})\Big)\mathbbm{1}_{\{\tau=+\infty,\rho^{1,\varepsilon}=+\infty\}}\Big]
-\varepsilon. \notag
\end{align}
Now fix $\hat{u}\in\bar{\mathcal{U}}_{t,T}$, then, proceeding as in the proof of the dynamic programming principle (Theorem \ref{T:DPP}), thanks to the regularity of $V^{-}$ and $J$ with respect to the state variable, we deduce the existence of a strategy $\beta^{2,\varepsilon}\in\bar{\mathcal{B}}_{t,T}$ such that
\[
\mathbb{E}\big[V^{-}(t,X_{t}^{t,x;\xi\mathbbm{1}_{[\tau,T]},\eta^{1,\varepsilon}\mathbbm{1}_{[\rho^{1,\varepsilon},T]}})\big]\geqslant
\mathbb{E}\big[J\big(t,X_{t}^{t,x;\xi\mathbbm{1}_{[\tau,T]},\eta^{1,\varepsilon}\mathbbm{1}_{[\rho^{1,\varepsilon},T]}};
\hat{u},\beta^{2,\varepsilon}(\hat{u})\big)\big]-\varepsilon.
\]
Finally, we define the control $u\in\mathcal{U}_{t,T}$ and the strategy $\beta^{\varepsilon}\in\mathcal{B}_{t,T}$ as follows:
\[
u=\big[\big(\xi\mathbbm{1}_{\{\tau=t\}}+u_{0}\mathbbm{1}_{\{\tau=+\infty\}}\big)\mathbbm{1}_{t}+\hat{u}\big]
(1-\mathbbm{1}_{\{\tau=+\infty,\rho^{1,\varepsilon}=+\infty\}})+u_{0}\mathbbm{1}_{\{\tau=+\infty,\rho^{1,\varepsilon}=+\infty\}}
\]
and
\begin{align*}
\beta^{\varepsilon}(\tilde{u})&=\big[\big(\eta^{1,\varepsilon}\mathbbm{1}_{\{\rho^{1,\varepsilon}=t\}}+v_{0}\mathbbm{1}_{\{\rho^{1,\varepsilon}=+\infty\}}\big)\mathbbm{1}_{t}
+\beta^{2,\varepsilon}(\tilde{u})\big](1-\mathbbm{1}_{\{\tau=+\infty,\rho^{1,\varepsilon}=+\infty\}})+ \\
&\quad+v_{0}\mathbbm{1}_{\{\tau=+\infty,\rho^{1,\varepsilon}=+\infty\}},
\end{align*}
for every $\tilde{u}\in\mathcal{U}_{t,T}$. Therefore, from (\ref{E:DPPTproof}) we find
\begin{align*}
&\inf_{\rho\in\mathcal{T}_{t,+\infty},\eta\in\mathcal{F}_{\rho}}
\sup_{\tau\in\mathcal{T}_{t,+\infty},\xi\in\mathcal{F}_{\tau}}
\mathbb{E}\Big[\Big(-c(t,\xi)\mathbbm{1}_{\{\tau=t,\rho=+\infty\}}+ \\
&+\chi(t,\eta)\mathbbm{1}_{\{\rho=t\}}
+V^{-}(t,X_{t}^{t,x;\xi\mathbbm{1}_{[\tau,T]},\eta\mathbbm{1}_{[\rho,T]}})\Big)\big(1-\mathbbm{1}_{\{\tau=+\infty,\rho=+\infty\}}\big)+ \\
&+\Big(\int_{t}^{T}f(X_{r}^{t,x;u_{0},v_{0}})\textup{d}r+g(X_{T}^{t,x;u_{0},v_{0}})\Big)\mathbbm{1}_{\{\tau=+\infty,\rho=+\infty\}}\Big]\geqslant J(t,x;u,\beta^{\varepsilon}(u))-2\varepsilon,
\end{align*}
from which we deduce that the following inequality holds:
\begin{align*}
&\inf_{\rho\in\mathcal{T}_{t,+\infty},\eta\in\mathcal{F}_{\rho}}
\sup_{\tau\in\mathcal{T}_{t,+\infty},\xi\in\mathcal{F}_{\tau}}
\mathbb{E}\Big[\Big(-c(t,\xi)\mathbbm{1}_{\{\tau=t,\rho=+\infty\}}+ \\
&+\chi(t,\eta)\mathbbm{1}_{\{\rho=t\}}
+V^{-}(t,X_{t}^{t,x;\xi\mathbbm{1}_{[\tau,T]},\eta\mathbbm{1}_{[\rho,T]}})\Big)\big(1-\mathbbm{1}_{\{\tau=+\infty,\rho=+\infty\}}\big)+ \\
&+\Big(\int_{t}^{T}f(X_{r}^{t,x;u_{0},v_{0}})\textup{d}r+g(X_{T}^{t,x;u_{0},v_{0}})\Big)\mathbbm{1}_{\{\tau=+\infty,\rho=+\infty\}}\Big]\geqslant V^{-}(t,x).
\end{align*}
In a similar way we can prove the reverse inequality and thus we get the thesis.
\end{proof}

\section{Hamilton-Jacobi-Bellman-Isaacs Equation}
\label{S:HJBI}
In the present section we give the definition of viscosity solution to the HJBI equation (\ref{E:HJBI}) and we prove that the two value functions are viscosity solutions to this equation.
\begin{definition}
A lower (resp., upper) semicontinuous function $V:[0,T]\times\mathbb{R}^{n}\rightarrow\mathbb{R}$ is called a viscosity supersolution (resp., subsolution) to the HJBI equation~(\ref{E:HJBI}) if
\begin{enumerate}[\upshape (i)]
    \item for every $(t,x)\in[0,T)\times\mathbb{R}^{n}$ and $\varphi\in C^{1,2}([0,T)\times\mathbb{R}^{n})$, such that $(t,x)$ is a local minimum (resp.,
        maximum) of $V-\varphi$, we have
        \begin{align}
        \label{E:HJBIvisc}
        &\max\Big\{\min\Big[-\dfrac{\partial\varphi}{\partial t}(t,x)-\mathcal{L}\varphi(t,x)-f(x),V(t,x)-\mathcal{H}_{\sup}^{c}V(t,x)\Big], \\
        &\hspace{5cm}V(t,x)-\mathcal{H}_{\inf}^{\chi}V(t,x)\Big\}\geqslant0 \quad \text{\textup{(}resp.,
        $\leqslant0$\textup{)}}. \notag
        \end{align}
    \item for every $x\in\mathbb{R}^{n}$ we have
        \begin{align}
        \label{E:HJBIfinal}
        &\max\Big\{\min\Big[V(T,x)-g(x),V(T,x)-\mathcal{H}_{\sup}^{c}V(T,x)\Big], \\
        &\hspace{5cm}V(T,x)-\mathcal{H}_{\inf}^{\chi}V(T,x)\Big\}\geqslant0 \quad \text{\textup{(}resp., $\leqslant0$\textup{)}}.
        \notag
        \end{align}
\end{enumerate}
A locally bounded function $V\colon[0,T)\times\mathbb{R}^{n}\rightarrow\mathbb{R}$ is a viscosity solution to the HJBI equation (\ref{E:HJBI}) if its lower semicontinuous envelope $V_{*}$ is a viscosity supersolution and its upper semicontinuous envelope $V^{*}$ is a viscosity subsolution. $V_{*}$ and $V^{*}$ are given by:
\[
V_{*}(t,x):=\liminf_{(s,y)\rightarrow(t,x),\,s<T}V(s,y), \qquad\qquad V^{*}(t,x):=\limsup_{(s,y)\rightarrow(t,x),\,s<T}V(s,y),
\]
for every $(t,x)\in[0,T]\times\mathbb{R}^{n}$.
\end{definition}

\begin{rmk}
\label{V*}
\textup{The lower and upper semicontinuous envelopes of the restriction of $V^{-}$ to $[0,T)\times\mathbb{R}^{n}$ coincide on $[0,T]\times\mathbb{R}^{n}$ and are equal to the ($1/2$-H\"older continuous in time and Lipschitz continuous in the state variable) extension of $V^{-}$, see Remark~\ref{R:ContExt}. An analogous remark applies to the upper value function $V^{+}$.}
\end{rmk}

To prove that the two value functions are viscosity solution to the HJBI equation~(\ref{E:HJBI}), we begin with the following lemma.
\begin{lemma}
\label{L:Bounds}
Under assumptions \textup{\textbf{(H$_{b,\sigma}$)}}, \textup{\textbf{(H$_{f,g}$)}} and \textup{\textbf{(H$_{c,\chi}$)}}, the lower and upper value functions satisfy the following equation:
\begin{equation}
\max\Big\{\min\Big[0,V(t,x)-\mathcal{H}_{\sup}^{c}V(t,x)\Big],V(t,x)-\mathcal{H}_{\inf}^{\chi}V(t,x)\Big\}=0, \notag
\end{equation}
for all $t\in[0,T)$ and $x\in\mathbb{R}^{n}$.
\end{lemma}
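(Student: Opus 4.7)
The plan is to specialize the dynamic programming principle at $s=t$ (Corollary~\ref{C:DPPs=t}) and evaluate the resulting expression by partitioning on the four possible values of $(\tau,\rho)\in\{t,+\infty\}^{2}$. Using the priority rule that player II's impulse overrides player I's when they act simultaneously, the state at time $t$ is given by $X_{t}=x+\eta$ if $\rho=t$ (for any $\tau$), $X_{t}=x+\xi$ if $\rho=+\infty$ and $\tau=t$, and $X_{t}=x$ if both are $+\infty$.

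I will treat the inner supremum according to the value of $\rho$. When $\rho=t$, the cost indicator $\mathbbm{1}_{\{\tau=t,\rho=+\infty\}}$ vanishes and the integrand reduces to $\chi(t,\eta)+V^{-}(t,x+\eta)$, independent of $(\tau,\xi)$; the subsequent infimum over $\mathcal{F}_{t}$-measurable $\eta\in\mathscr{V}$ equals $\mathcal{H}_{\inf}^{\chi}V^{-}(t,x)$, since deterministic elements of $\mathscr{V}$ already exhaust the pointwise infimum. When $\rho=+\infty$, the integrand reduces to $-c(t,\xi)\mathbbm{1}_{\{\tau=t\}}+V^{-}(t,x+\xi\mathbbm{1}_{\{\tau=t\}})$, and supremum over $(\tau,\xi)$ yields $\max\{V^{-}(t,x),\mathcal{H}_{\sup}^{c}V^{-}(t,x)\}$. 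Taking the outer infimum over $\rho\in\{t,+\infty\}$ gives the key identity
\[
V^{-}(t,x)\;=\;\min\Bigl\{\mathcal{H}_{\inf}^{\chi}V^{-}(t,x),\; \max\{V^{-}(t,x),\,\mathcal{H}_{\sup}^{c}V^{-}(t,x)\}\Bigr\}.
\]

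From this identity it is immediate that $V^{-}(t,x)\leqslant\mathcal{H}_{\inf}^{\chi}V^{-}(t,x)$, and, since the minimum is attained, either $V^{-}(t,x)=\mathcal{H}_{\inf}^{\chi}V^{-}(t,x)$ or $V^{-}(t,x)\geqslant\mathcal{H}_{\sup}^{c}V^{-}(t,x)$. Setting $A:=V^{-}-\mathcal{H}_{\sup}^{c}V^{-}$ and $B:=V^{-}-\mathcal{H}_{\inf}^{\chi}V^{-}$, these two conditions read $B\leqslant0$ together with ``$A\geqslant0$ or $B=0$'', which a short case check shows to be equivalent to $\max\{\min[0,A],B\}=0$, as claimed. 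For $V^{+}$ one proceeds analogously: the analogous formulation of Corollary~\ref{C:DPPs=t} yields, after the same case analysis, the expression $\max\{\min\{\mathcal{H}_{\sup}^{c}V^{+},\mathcal{H}_{\inf}^{\chi}V^{+}\},\min\{V^{+},\mathcal{H}_{\inf}^{\chi}V^{+}\}\}$, which simplifies to the same identity via the elementary equality $\max\{\min\{a,b\},\min\{c,b\}\}=\min\{b,\max\{a,c\}\}$.

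The computation is essentially algebraic once Corollary~\ref{C:DPPs=t} is in hand; the only point requiring care is the passage from supremum (resp.\ infimum) over $\mathcal{F}_{\tau}$-measurable (resp.\ $\mathcal{F}_{\rho}$-measurable) random impulses to the pointwise extrema defining $\mathcal{H}_{\sup}^{c}$ and $\mathcal{H}_{\inf}^{\chi}$, but this is routine since the integrands are deterministic at $\tau=t$ or $\rho=t$ and constant impulses are admissible.
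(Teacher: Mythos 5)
Your proposal is correct and follows essentially the same route as the paper: both specialize the dynamic programming principle at $s=t$ via Corollary~\ref{C:DPPs=t}, reduce to deterministic choices of $(\tau,\rho)\in\{t,+\infty\}^{2}$ and deterministic impulses, and perform the same four-case analysis. The only (harmless) presentational difference is that you first extract the fixed-point identity $V=\min\{\mathcal{H}_{\inf}^{\chi}V,\max\{V,\mathcal{H}_{\sup}^{c}V\}\}$ and then convert it logically into the max-min form, whereas the paper rearranges the DPP directly into that form; your explicit treatment of $V^{+}$ via $\max\{\min\{a,b\},\min\{c,b\}\}=\min\{b,\max\{a,c\}\}$ fills in a detail the paper leaves as ``analogous.''
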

\begin{proof}
We prove the lemma only for $V^{-}$, since the proof for $V^{+}$ is analogous. When $s=t$ in the dynamic programming principle for $V^{-}$, we have, thanks to Corollary \ref{C:DPPs=t}:
\begin{align*}
V^{-}(t,x)=\inf_{\rho\in\mathcal{T}_{t,+\infty},\eta\in\mathcal{F}_{\rho}}
\sup_{\tau\in\mathcal{T}_{t,+\infty},\xi\in\mathcal{F}_{\tau}}
\mathbb{E}\Big[&-c(t,\xi)\mathbbm{1}_{\{\tau=t\}}\mathbbm{1}_{\{\rho=+\infty\}}+ \\
&\hspace{-2.5cm}+\chi(t,\eta)\mathbbm{1}_{\{\rho=t\}}+V^{-}(t,X_{t}^{t,x;\xi\mathbbm{1}_{[\tau,T]},\eta\mathbbm{1}_{[\rho,T]}})\Big].
\end{align*}
Note that $X_{t}^{t,x;\xi\mathbbm{1}_{[\tau,T]},\eta\mathbbm{1}_{[\rho,T]}}=x+\xi\mathbbm{1}_{\{\tau=t\}}\mathbbm{1}_{\{\rho=+\infty\}}+\eta\mathbbm{1}_{\{\rho=t\}}$, $\mathbb{P}$-a.s.. It is simple to show that to attain the optimum we need to consider only deterministic quadruples: $(\rho,z,\tau,y)\in\{t,+\infty\}\times\mathscr{V}\times\{t,+\infty\}\times\mathscr{U}$. Consequently we have:
\begin{align*}
V^{-}(t,x)=\inf_{\rho\in\{t,+\infty\},z\in\mathscr{V}}
\sup_{\tau\in\{t,+\infty\},y\in\mathscr{U}}
\Big\{&-c(t,y)\mathbbm{1}_{\{\tau=t\}}\mathbbm{1}_{\{\rho=+\infty\}}+ \\
&\hspace{-2.5cm}+\chi(t,z)\mathbbm{1}_{\{\rho=t\}}+V^{-}(t,X_{t}^{t,x;y\mathbbm{1}_{[\tau,T]},z\mathbbm{1}_{[\rho,T]}})\Big\}.
\end{align*}
Therefore, rearranging the terms, the above equation can be written as follows:
\begin{align*}
\sup_{\rho\in\{t,+\infty\},z\in\mathscr{V}}
\inf_{\tau\in\{t,+\infty\},y\in\mathscr{U}}
&\Big\{\Big[V^{-}(t,x)-\big(V^{-}(t,x+y)-c(t,y)\big)\Big]\mathbbm{1}_{\{\tau=t,\rho=+\infty\}}+ \\
&+\Big[V^{-}(t,x)-\big(V^{-}(t,x+z)+\chi(t,z)\big)\Big]\mathbbm{1}_{\{\rho=t\}}\Big\}=0.
\end{align*}
We can write this as
\begin{align*}
\sup_{\rho\in\{t,+\infty\},z\in\mathscr{V}}
&\Big\{\min_{\tau\in\{t,+\infty\}}\Big\{\big(V^{-}(t,x)-\mathcal{H}_{\sup}^{c}V^{-}(t,x)\big)\mathbbm{1}_{\{\tau=t,\rho=+\infty\}}\Big\}+ \\
&+\Big[V^{-}(t,x)-\big(V^{-}(t,x+z)+\chi(t,z)\big)\Big]\mathbbm{1}_{\{\rho=t\}}\Big\}=0,
\end{align*}
which becomes
\begin{align*}
\max_{\rho\in\{t,+\infty\}}
&\Big\{\min\Big[0,V^{-}(t,x)-\mathcal{H}_{\sup}^{c}V^{-}(t,x)\Big]\mathbbm{1}_{\{\rho=+\infty\}}+ \\
&+\Big[V^{-}(t,x)-\mathcal{H}_{\inf}^{\chi}V^{-}(t,x)\Big]\mathbbm{1}_{\{\rho=t\}}\Big\}=0.
\end{align*}
Now we can easily derive the thesis.
\end{proof}
\begin{rmk}
\label{R:Bounds}
\textup{From Lemma \ref{L:Bounds} we deduce that $V^{-}\leqslant\mathcal{H}_{\inf}^{\chi}V^{-}$ on $[0,T)\times\mathbb{R}^{n}$. Moreover, when $V^{-}<\mathcal{H}_{\inf}^{\chi}V^{-}$, then $\mathcal{H}_{\sup}^{c}V^{-}\leqslant V^{-}$ and $\mathcal{H}_{\sup}^{c}V^{-}<\mathcal{H}_{\inf}^{\chi}V^{-}$. Therefore, we can interpret $\mathcal{H}_{\sup}^{c}V^{-}$ as a lower obstacle and $\mathcal{H}_{\inf}^{\chi}V^{-}$ as an upper obstacle. They are implicit obstacles, in the sense that they depend on $V^{-}$. Clearly the same remark applies to $V^{+}$.}
\end{rmk}

Now we prove that the two value functions satisfy, in the viscosity sense, the terminal condition.
\begin{lemma}
\label{L:TC}
Under assumptions \textup{\textbf{(H$_{b,\sigma}$)}}, \textup{\textbf{(H$_{f,g}$)}} and \textup{\textbf{(H$_{c,\chi}$)}}, the lower and upper value functions are viscosity solutions to (\ref{E:HJBIfinal}).
\end{lemma}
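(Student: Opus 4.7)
The plan is to pass to the limit $t \nearrow T$ in the alternative dynamic programming principle of Lemma \ref{L:DPPT} and then reproduce the algebraic manipulation of Lemma \ref{L:Bounds} at the horizon. I work only with $V^{-}$, the case of $V^{+}$ being identical. Denote by $\tilde{V}^{-}$ the continuous extension of the restriction of $V^{-}$ to $[0,T)\times\mathbb{R}^{n}$ given by Remark \ref{R:ContExt}; by Remark \ref{V*}, both semicontinuous envelopes of $V^{-}$ at the horizon coincide with $\tilde{V}^{-}$, so proving (\ref{E:HJBIfinal}) with equality for $\tilde{V}^{-}$ simultaneously yields the viscosity sub- and supersolution properties.

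Set $W(x) := \tilde{V}^{-}(T,x)$ and send $t \nearrow T$ in Lemma \ref{L:DPPT}. The boundedness of $f$ gives $\int_{t}^{T}f(X_{r}^{t,x;u_{0},v_{0}})\textup{d}r \to 0$, while $g(X_{T}^{t,x;u_{0},v_{0}}) \to g(x)$ via Lemma \ref{L:Cont-x} and continuity of $g$. The $1/2$-H\"older regularity of $V^{-}$ in time up to $T$ (Proposition \ref{P:Cont-t}, through $\tilde{V}^{-}$), the analogous regularity of $c$ and $\chi$ from \textbf{(H$_{c,\chi}$)}, and the $L^{1}$-continuity at $t$ of $X_{t}^{t,x;\xi\mathbbm{1}_{[\tau,T]},\eta\mathbbm{1}_{[\rho,T]}}$ ensure that all remaining terms converge to their natural counterparts at time $T$. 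Exactly as in Lemma \ref{L:Bounds}, the inf--sup is attained on deterministic quadruples $(\rho,z,\tau,y)\in\{T,+\infty\}\times\mathscr{V}\times\{T,+\infty\}\times\mathscr{U}$, so in the limit
\begin{align*}
W(x) = \inf_{\rho\in\{T,+\infty\},z\in\mathscr{V}}\sup_{\tau\in\{T,+\infty\},y\in\mathscr{U}}\Big\{&g(x)\mathbbm{1}_{\{\tau=+\infty,\rho=+\infty\}} \\
&\hspace{-5cm}+\big[W(x+y)-c(T,y)\big]\mathbbm{1}_{\{\tau=T,\rho=+\infty\}}+\big[W(x+z)+\chi(T,z)\big]\mathbbm{1}_{\{\rho=T\}}\Big\}.
\end{align*}

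For $\rho=+\infty$ the inner $\sup$ equals $\max\{g(x),\mathcal{H}_{\sup}^{c}W(x)\}$, while for $\rho=T$ it equals $\chi(T,z)+W(x+z)$, whose infimum over $z\in\mathscr{V}$ is $\mathcal{H}_{\inf}^{\chi}W(x)$. Taking the outer minimum over $\rho$ yields
\[
W(x) = \min\Big\{\max\{g(x),\,\mathcal{H}_{\sup}^{c}W(x)\},\,\mathcal{H}_{\inf}^{\chi}W(x)\Big\}.
\]
Writing $A:=W-g$, $B:=W-\mathcal{H}_{\sup}^{c}W$, $C:=W-\mathcal{H}_{\inf}^{\chi}W$, this identity is equivalent to $C\leqslant 0$, $\min\{A,B\}\leqslant 0$, and at least one of these being an equality, i.e., $\max\{\min\{A,B\},C\}=0$, which is precisely (\ref{E:HJBIfinal}) with equality.

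The only genuinely delicate point is the interchange of the $\inf$--$\sup$ with the limit $t\nearrow T$ in the first step. Uniformity in the optimising quadruple $(\rho,z,\tau,y)$ is obtained by combining the Hölder bound of Proposition \ref{P:Cont-t} for $\tilde{V}^{-}$ (uniform in $x$), the uniform-in-action time regularity of $c,\chi$ from \textbf{(H$_{c,\chi}$)}, and a standard moment estimate on $X^{t,x;\xi\mathbbm{1}_{[\tau,T]},\eta\mathbbm{1}_{[\rho,T]}}-x$ over $[t,T]$ coming from \textbf{(H$_{b,\sigma}$)}; once this uniform convergence is in hand, stability of the inf--sup under the limit is automatic.
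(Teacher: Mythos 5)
Your proof is correct and follows essentially the same route as the paper: both pass to the limit $t\nearrow T$ in Lemma \ref{L:DPPT}, control the terms $\int_{t}^{T}f$ and $g(X_{T}^{t,x;u_{0},v_{0}})-g(x)$ by $C|T-t|^{1/2}$, reduce to deterministic quadruples as in Lemma \ref{L:Bounds}, and invoke the uniform $1/2$-H\"older continuity in time of $V^{-}$, $c$ and $\chi$ to justify the limit. The only (cosmetic) difference is that you obtain the exact identity $W=\min\{\max\{g,\mathcal{H}_{\sup}^{c}W\},\mathcal{H}_{\inf}^{\chi}W\}$ at the horizon, which yields the sub- and supersolution properties simultaneously, whereas the paper derives the two one-sided inequalities separately.
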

\begin{proof}
We make the proof only for $V^{-}$, the other case being analogous. Firstly, we prove the supersolution property. We have to prove that, for every $x\in\mathbb{R}^{n}$, the following inequality holds:
\begin{align}
\label{E:BoundsT}
\max\Big\{\min\Big[V_{*}^{-}(T,x)-g(x),&\,V_{*}^{-}(T,x)-\mathcal{H}_{\sup}^{c}V_{*}^{-}(T,x)\Big], \\
&V_{*}^{-}(T,x)-\mathcal{H}_{\inf}^{\chi}V_{*}^{-}(T,x)\Big\}\geqslant0. \notag
\end{align}
Thanks to Lemma \ref{L:DPPT} and the fact that $V^{-}$ and $V_{*}^{-}$ coincide on $[0,T)\times\mathbb{R}^{n}$, we have for every $t\in[0,T)$:
\begin{align*}
V_{*}^{-}(t,x)&=\inf_{\rho\in\mathcal{T}_{t,+\infty},\eta\in\mathcal{F}_{\rho}}
\sup_{\tau\in\mathcal{T}_{t,+\infty},\xi\in\mathcal{F}_{\tau}}
\mathbb{E}\Big[\Big(-c(t,\xi)\mathbbm{1}_{\{\tau=t,\rho=+\infty\}}+\chi(t,\eta)\mathbbm{1}_{\{\rho=t\}}+ \\
&\qquad+V_{*}^{-}(t,X_{t}^{t,x;\xi\mathbbm{1}_{[\tau,T]},\eta\mathbbm{1}_{[\rho,T]}})\Big)\big(1-\mathbbm{1}_{\{\tau=+\infty,\rho=+\infty\}}\big)+ \\
&\qquad+\Big(\int_{t}^{T}f(X_{r}^{t,x;u_{0},v_{0}})\textup{d}r+g(X_{T}^{t,x;u_{0},v_{0}})\Big)\mathbbm{1}_{\{\tau=+\infty,\rho=+\infty\}}\Big].
\end{align*}
Thanks to the boundedness of $f$ we find (in the sequel the letter $C$ stands for a positive constant, independent of $t$, whose value may change from line to line):
\[
\mathbb{E}\Big[\Big|\int_{t}^{T}f(X_{r}^{t,x;u_{0},v_{0}})\textup{d}r\Big|\Big]\leqslant C(T-t).
\]
Furthermore, using the Lipschitzianity of $g$ and the fact that $u_{0}$ and $v_{0}$ are the control with no impulses, we deduce the following standard result:
\[
\mathbb{E}\big[\big|g(X_{T}^{t,x;u_{0},v_{0}})-g(x)\big|\big]\leqslant C(1+|x|)|T-t|^{\frac{1}{2}}.
\]
As a consequence we get:
\begin{align*}
V_{*}^{-}(t,x)&\geqslant\inf_{\rho\in\mathcal{T}_{t,+\infty},\eta\in\mathcal{F}_{\rho}}
\sup_{\tau\in\mathcal{T}_{t,+\infty},\xi\in\mathcal{F}_{\tau}}
\mathbb{E}\Big[\Big(-c(t,\xi)\mathbbm{1}_{\{\tau=t,\rho=+\infty\}}+\chi(t,\eta)\mathbbm{1}_{\{\rho=t\}}+ \\
&\qquad+V_{*}^{-}(t,X_{t}^{t,x;\xi\mathbbm{1}_{[\tau,T]},\eta\mathbbm{1}_{[\rho,T]}})\Big)\big(1-\mathbbm{1}_{\{\tau=+\infty,\rho=+\infty\}}\big)+ \\
&\qquad+g(x)\mathbbm{1}_{\{\tau=+\infty,\rho=+\infty\}}\Big]-C(1+|x|)|T-t|^{\frac{1}{2}}.
\end{align*}
Now, proceeding as in the proof of Lemma \ref{L:Bounds}, we end up with the following inequality:
\begin{align*}
\max\Big\{\min\Big[V_{*}^{-}(t,x)-g(x),&\,V_{*}^{-}(t,x)-\mathcal{H}_{\sup}^{c}V_{*}^{-}(t,x)\Big], \\
&V_{*}^{-}(t,x)-\mathcal{H}_{\inf}^{\chi}V_{*}^{-}(t,x)\Big\}\geqslant-C(1+|x|)|T-t|^{\frac{1}{2}}.
\end{align*}
Using the $1/2$-H\"older continuity in time of $V_{*}^{-}$, $c$ and $\chi$, uniformly in the other variable, we deduce that also the left-hand side is $1/2$-H\"older continuous in time, uniformly with respect to $x$. Therefore we have:
\begin{align*}
&\max\Big\{\min\Big[V_{*}^{-}(T,x)-g(x),V_{*}^{-}(T,x)-\mathcal{H}_{\sup}^{c}V_{*}^{-}(T,x)\Big],V_{*}^{-}(T,x)-\mathcal{H}_{\inf}^{\chi}V_{*}^{-}(T,x)\Big\}+ \\
&\quad+C|T-t|^{\frac{1}{2}}\geqslant
\max\Big\{\min\Big[V_{*}^{-}(t,x)-g(x),V_{*}^{-}(t,x)-\mathcal{H}_{\sup}^{c}V_{*}^{-}(t,x)\Big], \\
&\quad V_{*}^{-}(t,x)-\mathcal{H}_{\inf}^{\chi}V_{*}^{-}(t,x)\Big\}.
\end{align*}
Hence we see that (\ref{E:BoundsT}) holds. In an analogous manner we can prove the subsolution property.
\end{proof}

Now we are ready to state one of our main results.
\begin{theorem}
\label{T:Exist}
Under assumptions \textup{\textbf{(H$_{b,\sigma}$)}}, \textup{\textbf{(H$_{f,g}$)}} and \textup{\textbf{(H$_{c,\chi}$)}}, the lower and upper value functions are viscosity solutions to the HJBI equation (\ref{E:HJBI}).
\end{theorem}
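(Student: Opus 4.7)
My plan is to combine three ingredients already in hand: the obstacle equality of Lemma~\ref{L:Bounds} delivers the nonlocal parts of the viscosity inequality (\ref{E:HJBIvisc}) for free; the terminal inequality (\ref{E:HJBIfinal}) is exactly Lemma~\ref{L:TC}; and the interior parabolic part will be proven by contradiction via the dynamic programming principle (Theorem~\ref{T:DPP} and its stopping-time extension Lemma~\ref{L:DPPtau}), combined with an It\^o expansion of the test function. Because Propositions \ref{P:Cont-x}--\ref{P:VBounded} and Remark~\ref{V*} ensure that $V^{-}$ is continuous, bounded, and agrees with both of its envelopes on $[0,T)\times\mathbb{R}^{n}$, it suffices to verify the viscosity inequalities for $V^{-}$ itself. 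The argument for $V^{+}$ is entirely symmetric.

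For the supersolution property, fix $(t_0,x_0)\in[0,T)\times\mathbb{R}^{n}$ and $\varphi\in C^{1,2}$ with $\varphi(t_0,x_0)=V^{-}(t_0,x_0)$ and $(t_0,x_0)$ a local minimum of $V^{-}-\varphi$. If $V^{-}(t_0,x_0)=\mathcal{H}_{\inf}^{\chi}V^{-}(t_0,x_0)$, (\ref{E:HJBIvisc}) holds trivially by Lemma~\ref{L:Bounds}; otherwise the same lemma forces $V^{-}(t_0,x_0)\geq\mathcal{H}_{\sup}^{c}V^{-}(t_0,x_0)$, and the only remaining task is to show $-\partial_t\varphi(t_0,x_0)-\mathcal{L}\varphi(t_0,x_0)-f(x_0)\geq 0$. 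Assuming the reverse strict inequality, the joint continuity of $\partial_t\varphi,\mathcal{L}\varphi,f$ and the spatial Lipschitz / temporal $1/2$-H\"older regularity of $V^{-}$, $c$ and $\chi$ (all uniform in the impulse variable) produce $h,\delta,\gamma,\eta>0$ such that on $[t_0,t_0+h]\times\overline{B_{\delta}(x_0)}$
\[
-\partial_t\varphi-\mathcal{L}\varphi-f\leq -\gamma,\qquad V^{-}(t,x+z)+\chi(t,z)\geq V^{-}(t,x)+\eta\;\;\forall z\in\mathscr{V},\qquad \|V^{-}-\varphi\|_{\infty}<\eta/2.
\]
Invoke Lemma~\ref{L:DPPtau} with the constant strategy $\beta(u)\equiv v_0$ and a countable-valued approximation $\tau_h$ of the exit time $(t_0+h)\wedge\inf\{s\geq t_0:X_s^{u_0,v}\notin B_{\delta}(x_0)\}$. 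Pick an $\varepsilon$-minimizing $\beta^{\varepsilon}\in\mathcal{B}_{t_0,T}$, set $v^{\varepsilon}:=\beta^{\varepsilon}(u_0)\in\mathcal{V}_{t_0,T}$, and take $u=u_0$ inside the $\sup_u$ to obtain from the DPP the inequality
\[
V^{-}(t_0,x_0)+\varepsilon\geq\mathbb{E}\Big[\int_{t_0}^{\tau_h}f(X_r^{u_0,v^{\varepsilon}})\,\mathrm{d}r+\sum_\ell\chi(\rho_\ell^{\varepsilon},\eta_\ell^{\varepsilon})\mathbbm{1}_{\rho_\ell^{\varepsilon}\leq\tau_h}+V^{-}(\tau_h,X_{\tau_h}^{u_0,v^{\varepsilon}})\Big].
\]
It\^o's formula applied to $\varphi$ along the jump diffusion $X^{u_0,v^{\varepsilon}}$, combined with $V^{-}\geq\varphi$ on the neighborhood, the strict bound $\partial_t\varphi+\mathcal{L}\varphi\geq -f+\gamma$, and the jump estimate $\Delta_\ell\varphi\geq -\chi(\rho_\ell^{\varepsilon},\eta_\ell^{\varepsilon})+\eta/2$ (derived from the strict obstacle gap and $\|V^{-}-\varphi\|_{\infty}<\eta/2$), produces a lower bound on $\mathbb{E}[V^{-}(\tau_h,X_{\tau_h})]$ which, plugged into the displayed DPP inequality, collapses to $\varepsilon\geq\gamma\,\mathbb{E}[\tau_h-t_0]+(\eta/2)\,\mathbb{E}[N^{\varepsilon}]$, where $N^{\varepsilon}$ counts the impulses of $v^{\varepsilon}$ on $[t_0,\tau_h]$. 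This contradicts $\varepsilon\downarrow 0$.

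The subsolution property is obtained by the same pattern with roles exchanged: $(t_0,x_0)$ is a local maximum of $V^{-}-\varphi$, so $\varphi\geq V^{-}$ near $(t_0,x_0)$; the nontrivial case becomes $V^{-}(t_0,x_0)>\mathcal{H}_{\sup}^{c}V^{-}(t_0,x_0)$ together with $-\partial_t\varphi(t_0,x_0)-\mathcal{L}\varphi(t_0,x_0)-f(x_0)>0$; the uniform strict gap reads $V^{-}(t,x+y)\leq V^{-}(t,x)+c(t,y)-\eta$ for every $y\in\mathscr{U}$; one uses DPP with $\beta\equiv v_0$ and an $\varepsilon$-maximizing $u^{\varepsilon}\in\mathcal{U}_{t_0,T}$; It\^o on $\varphi$ along $X^{u^{\varepsilon},v_0}$, together with the jump estimate $\Delta_m\varphi\leq c(\tau_m,\xi_m^{\varepsilon})-\eta/2$, produces the symmetric contradiction $\varepsilon\geq\gamma\,\mathbb{E}[\tau_h-t_0]+(\eta/2)\,\mathbb{E}[N^{\varepsilon}]$. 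The main technical obstacle throughout is the bookkeeping required for the localization step: approximating the continuous-valued exit time by countable-valued stopping times so that Lemma~\ref{L:DPPtau} applies with zero expected martingale increment, and establishing the uniform-in-impulse strict gap on the neighborhood by combining the spatial Lipschitz and temporal H\"older estimates of $V^{-}$ with the corresponding estimates on $c$ and $\chi$.
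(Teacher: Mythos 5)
Your overall architecture (Lemma~\ref{L:Bounds} for the two obstacle inequalities, Lemma~\ref{L:TC} for the terminal condition, and the dynamic programming principle plus It\^o's formula for the parabolic term, after reducing to the case where the relevant obstacle is strictly inactive) coincides with the paper's. The gap is in how you neutralize the impulses of the near-optimal opposing control. You apply It\^o's formula to the test function $\varphi$ \emph{across} the jumps of $v^{\varepsilon}$ and invoke the jump estimate $\Delta_{\ell}\varphi\geqslant-\chi(\rho_{\ell}^{\varepsilon},\eta_{\ell}^{\varepsilon})+\eta/2$, which you derive from the obstacle gap together with $\|V^{-}-\varphi\|_{\infty}<\eta/2$ on a neighborhood of $(t_{0},x_{0})$. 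But that derivation needs a lower bound on $\varphi$ at the \emph{post-jump} point $X_{\rho_{\ell}^{-}}+\eta_{\ell}^{\varepsilon}$, and the impulse $\eta_{\ell}^{\varepsilon}$ ranges over the whole cone $\mathscr{V}$, so the post-jump state generally leaves the neighborhood on which $V^{-}-\varphi$ is controlled. Outside that neighborhood the local-minimum property gives no relation between $\varphi$ and $V^{-}$ (in either direction), so the jump estimate is unjustified; the same objection applies verbatim to $\Delta_{m}\varphi\leqslant c(\tau_{m},\xi_{m}^{\varepsilon})-\eta/2$ in your subsolution argument. The paper avoids this entirely: it never evaluates $\varphi$ at a post-jump location. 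Instead it collapses all impulses of $v_{\varepsilon}$ on $[t_{0},\tau]$ into a single one via the subadditivity (\ref{E:chi}) and the time-monotonicity (\ref{E:c_chi_time}), applies the propagated obstacle gap (\ref{E:EHinf}) to $V^{-}$ itself (which holds for \emph{all} $z\in\mathscr{V}$, not just small ones), thereby reducing the right-hand side of the DPP to the \emph{uncontrolled} diffusion $X^{t_{0},x_{0}}$, and only then brings in $\varphi$ and It\^o's formula along that continuous process.

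A secondary issue: your localizing stopping time $\tau_{h}$ is the exit time of $X^{u_{0},v}$ from $B_{\delta}(x_{0})$, which depends on the control $v=\beta^{\varepsilon}(u_{0})$, whereas Lemma~\ref{L:DPPtau} is stated for a stopping time fixed in advance of the inf--sup. The paper sidesteps this by building its countable-valued stopping times $\tau_{m}$ from the uncontrolled trajectory $X^{t_{0},x_{0}}$ alone (the sets $B_{n}$), which is legitimate precisely because the impulses have already been removed at that stage. To repair your proof you would essentially have to restructure it along these lines, i.e.\ handle the jumps at the level of $V^{-}$ before localizing and introducing the test function.
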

\begin{proof}
We give the proof for the lower value function $V^{-}$, the other case being analogous. Thanks to Lemma \ref{L:TC} we have that $V^{-}$ satisfies in the viscosity sense the terminal condition. As a consequence, we only have to address (\ref{E:HJBIvisc}).

We know from Proposition \ref{P:Cont-x} and Proposition \ref{P:Cont-t} that $V^{-}$ is continuous on $[0,T)\times\mathbb{R}^{n}$, therefore $V^{-}$ is equal to its lower semicontinuous envelope and to its upper semicontinuous envelope on $[0,T)\times\mathbb{R}^{n}$. We begin by proving that $V^{-}$ is a viscosity supersolution. Thanks to Lemma \ref{L:Bounds}, it is enough to show that given $(t_{0},x_{0})\in[0,T)\times\mathbb{R}^{n}$ such that
\[
V^{-}(t_{0},x_{0})-\mathcal{H}_{\sup}^{c}V^{-}(t_{0},x_{0})\geqslant0 \qquad \text{and} \qquad
V^{-}(t_{0},x_{0})-\mathcal{H}_{\inf}^{\chi}V^{-}(t_{0},x_{0})<0,
\]
then for every $\varphi\in C^{1,2}([0,T)\times\mathbb{R}^{n})$, such that $(t_{0},x_{0})$ is a local minimum of $V^{-}-\varphi$, we have
\[
-\frac{\partial\varphi}{\partial t}(t_{0},x_{0})-\mathcal{L}\varphi(t_{0},x_{0})-f(x_{0})\geqslant0.
\]
We may assume, without loss of generality, that
\[
V^{-}(t_{0},x_{0})=\varphi(t_{0},x_{0}).
\]
Let $\lambda>0$ be such that
\[
\lambda+V^{-}(t_{0},x_{0})=\mathcal{H}_{\inf}^{\chi}V^{-}(t_{0},x_{0})=\inf_{z\in\mathscr{V}}\big(V^{-}(t_{0},x_{0}+z)+\chi(t_{0},z)\big).
\]
From the regularity of $V^{-}$ we have (in the sequel the letter $C$ stands for a positive constant, whose value may change from line to line)
\[
\mathbb{E}[|V^{-}(s,X_{s}^{t_{0},x_{0}})-V^{-}(t_{0},x_{0})|]\leqslant C|s-t_{0}|^{\frac{1}{2}},
\]
with $X_{s}^{t_{0},x_{0}}=X_{s}^{t_{0},x_{0};u_{0},v_{0}}$, for all $s\in[t_{0},T]$, $\mathbb{P}$-a.s., where $u_{0}$ and $v_{0}$ are the controls with no impulses. Analogously, thanks to the $1/2$-H\"older continuity of $\chi$ with respect to time, uniformly with respect to the other variable, we have that $|\chi(s,z)-\chi(t_{0},z)|\leqslant C|s-t_{0}|^{1/2}$. Therefore, for every random variable $\eta$, $\mathcal{F}_{s}$-measurable and assuming values in $\mathscr{V}$, we find:
\begin{equation}
\label{E:EHinf}
\mathbb{E}[V^{-}(s,X_{s}^{t_{0},x_{0}})]\leqslant\mathbb{E}[V^{-}(s,X_{s}^{t_{0},x_{0}}+\eta)+\chi(s,\eta)]+C|s-t_{0}|^{\frac{1}{2}}-\lambda.
\end{equation}
Now, for every $\varepsilon>0$, using the dynamic programming principle for $V^{-}$, Lemma \ref{L:DPPtau}, with $s\in[t_{0},T)$ and $\tau$ a $[t_{0},s]$-valued $\mathbb{F}$-stopping time, which assumes a countable number of values, we have
\begin{align*}
&\varphi(t_{0},x_{0})=V^{-}(t_{0},x_{0})=
\inf_{\beta\in\mathcal{B}_{t_{0},T}}\sup_{u\in\mathcal{U}_{t_{0},T}}\mathbb{E}\Big[\int_{t_{0}}^{\tau}f(X_{r}^{t_{0},x_{0};u,\beta(u)})\textup{d}r
+V^{-}(\tau,X_{\tau}^{t_{0},x_{0};u,\beta(u)}) \\
&\qquad\quad-\sum_{m\geqslant1}c(\tau_{m},\xi_{m})\mathbbm{1}_{\{\tau_{m}\leqslant \tau\}}\prod_{\ell\geqslant1}\mathbbm{1}_{\{\tau_{m}\neq\rho_{\ell}(u)\}}+\sum_{\ell\geqslant1}\chi(\rho_{\ell}(u),\eta_{\ell}(u))\mathbbm{1}_{\{\rho_{\ell}(u)\leqslant \tau\}}\Big]\geqslant \\
&\qquad\quad\geqslant
\mathbb{E}\Big[\int_{t_{0}}^{\tau}f(X_{r}^{t_{0},x_{0};u_{0},v_{\varepsilon}})\textup{d}r +\sum_{\ell\geqslant1}\chi(\rho_{\ell}^{\varepsilon},\eta_{\ell}^{\varepsilon})\mathbbm{1}_{\{\rho_{\ell}^{\varepsilon}\leqslant \tau\}}+V^{-}(\tau,X_{\tau}^{t_{0},x_{0};u_{0},v_{\varepsilon}})\Big]-\varepsilon,
\end{align*}
for some $\beta_{\varepsilon}\in\mathcal{B}_{t_{0},T}$, with $\beta_{\varepsilon}(u_{0})=:v_{\varepsilon}=\sum_{\ell\geqslant}\eta_{\ell}^{\varepsilon}\mathbbm{1}_{[\rho_{\ell}^{\varepsilon},T]}\in\mathcal{V}_{t_{0},T}$. Our goal is to show that the following inequality holds true:
\begin{equation}
\label{E:ExistenceProof}
\varphi(t_{0},x_{0})\geqslant\mathbb{E}\Big[\int_{t_{0}}^{\tau}f(X_{r}^{t_{0},x_{0}})\textup{d}r+V^{-}(\tau,X_{\tau}^{t_{0},x_{0}})\Big]-\varepsilon,
\end{equation}
if we take $s$ sufficiently small. We exploit (\ref{E:EHinf}) to derive (\ref{E:ExistenceProof}).

Remember that the following identity holds:
\[
\sum_{\ell\geqslant1}\chi(\rho_{\ell}^{\varepsilon},\eta_{\ell}^{\varepsilon})\mathbbm{1}_{\{\rho_{\ell}^{\varepsilon}\leqslant \tau\}}
=\sum_{\ell=1}^{\mu_{t_{0},\tau}(v_{\varepsilon})}\chi(\rho_{\ell}^{\varepsilon},\eta_{\ell}^{\varepsilon}),
\]
where $\mu$ is given by equation (\ref{E:mu}). Now, using the boundedness of $b$ and $\sigma$, we get
\[
\mathbb{E}\Big[\Big|V^{-}(\tau,X_{\tau}^{t_{0},x_{0};u_{0},v_{\varepsilon}})
-V^{-}\Big(\tau,X_{\tau}^{t_{0},x_{0}}+\sum_{\ell=1}^{\mu_{t_{0},\tau}(v_{\varepsilon})}\eta_{\ell}^{\varepsilon}\Big)\Big|\Big]\leqslant C|s-t_{0}|^{\frac{1}{2}}\mathbb{E}[\mathbbm{1}_{\{\mu_{t_{0},\tau}(v_{\varepsilon})\geqslant1\}}].
\]
Furthermore, using (\ref{E:chi}), (\ref{E:c_chi_time}) and (\ref{E:EHinf}) we find
\begin{align*}
\mathbb{E}\Big[&\sum_{\ell=1}^{\mu_{t_{0},\tau}(v_{\varepsilon})}\chi(\rho_{\ell}^{\varepsilon},\eta_{\ell}^{\varepsilon})
+V^{-}\Big(\tau,X_{\tau}^{t_{0},x_{0}}+\sum_{\ell=1}^{\mu_{t_{0},\tau}(v_{\varepsilon})}\eta_{\ell}^{\varepsilon}\Big)\Big]\geqslant
\mathbb{E}\Big[\chi\Big(\tau,\sum_{\ell=1}^{\mu_{t_{0},\tau}(v_{\varepsilon})}\eta_{\ell}^{\varepsilon}\Big)
\mathbbm{1}_{\{\mu_{t_{0},\tau}(v_{\varepsilon})\geqslant1\}}+ \\
&+V^{-}\Big(\tau,X_{\tau}^{t_{0},x_{0}}+\sum_{\ell=1}^{\mu_{t_{0},\tau}(v_{\varepsilon})}\eta_{\ell}^{\varepsilon}\Big)
-V^{-}(\tau,X_{\tau}^{t_{0},x_{0}})+V^{-}(\tau,X_{\tau}^{t_{0},x_{0}})\Big]\geqslant
\mathbb{E}\big[V^{-}(\tau,X_{\tau}^{t_{0},x_{0}})+ \\
&+(\lambda-C|s-t_{0}|^{\frac{1}{2}})\mathbbm{1}_{\{\mu_{t_{0},\tau}(v_{\varepsilon})\geqslant1\}}\big].
\end{align*}
Hence
\[
\varphi(t_{0},x_{0})\geqslant\mathbb{E}\Big[\int_{t_{0}}^{\tau}f(X_{r}^{t_{0},x_{0};u_{0},v_{\varepsilon}})\textup{d}r
+V^{-}(\tau,X_{\tau}^{t_{0},x_{0}})+(\lambda-C|s-t_{0}|^{\frac{1}{2}})\mathbbm{1}_{\{\mu_{t_{0},\tau}(v_{\varepsilon})\geqslant1\}}\Big]-\varepsilon.
\]
Using the boundedness of $f$ we deduce
\begin{align*}
\varphi(t_{0},x_{0})\geqslant\mathbb{E}\Big[\int_{t_{0}}^{\tau}f(X_{r}^{t_{0},x_{0}})\textup{d}r
+V^{-}(\tau,X_{\tau}^{t_{0},x_{0}})+\big(\lambda&-C|s-t_{0}|^{\frac{1}{2}} \\
&-C|s-t_{0}|\big)\mathbbm{1}_{\{\mu_{t_{0},\tau}(v_{\varepsilon})\geqslant1\}}\Big]-\varepsilon.
\end{align*}
Therefore there exists $\bar{s}\in(t_{0},T)$ such that for $s\in[t_{0},\bar{s}\,]$ we have
\[
\lambda-C|s-t_{0}|^{\frac{1}{2}}-C|s-t_{0}|\geqslant0.
\]
Consequently, for every $[t_{0},\bar{s}\,]$-valued $\mathbb{F}$-stopping time $\tau$, which assumes a countable number of values, we deduce that inequality (\ref{E:ExistenceProof}) holds.

Since $(t_{0},x_{0})$ is a local minimum of $V^{-}-\varphi$, there exists $\delta>0$ such that
\[
V^{-}(t,x)\geqslant\varphi(t,x), \qquad\qquad |t-t_{0}|\leqslant\delta,\,|x-x_{0}|\leqslant\delta.
\]
Moreover there exists a positive constant $C$ such that, for every $t\in[t_{0},T)$, we have
\[
\mathbb{E}[|X_{t}^{t_{0},x_{0}}-x_{0}|]\leqslant C|t-t_{0}|^{\frac{1}{2}}.
\]
Consequently there exists a sequence $t_{n}\downarrow t_{0}$ such that $X_{t_{n}}^{t_{0},x_{0}}\rightarrow x_{0}$, $\mathbb{P}$-a.s., as $n$ tends to infinity. We may assume that $t_{n}\leqslant\bar{s}$, for every $n\geqslant1$. Now define the following sets $B_{n}\subset\Omega$:
\[
B_{n}=\big\{|X_{t_{m}}^{t_{0},x_{0}}-x_{0}|\leqslant\delta,\,\forall m\geqslant n\big\}.
\]
Then $B_{n}\uparrow B:=\cup_{n\geqslant1}B_{n}$, with $\mathbb{P}(B)=1$. Furthermore, for every $m\geqslant1$, let introduce the stopping time
\[
\tau_{m}=\sum_{n=1}^{\infty}t_{n+m}\mathbbm{1}_{\{B_{n}\backslash B_{n-1}\}},
\]
where $B_{0}$ is the empty set. Then $\tau_{m}\downarrow t_{0}$, $\mathbb{P}$-a.s., moreover $\tau_{m}\leqslant t_{m}$. Inserting $\tau_{m}$ in (\ref{E:ExistenceProof}), we find
\begin{align*}
\varphi(t_{0},x_{0})&\geqslant\mathbb{E}\Big[\int_{t_{0}}^{\tau_{m}}f(X_{r}^{t_{0},x_{0}})\textup{d}r+V^{-}(\tau_{m},X_{\tau_{m}}^{t_{0},x_{0}})\Big]-\varepsilon
\geqslant \\
&\geqslant\mathbb{E}\Big[\int_{t_{0}}^{\tau_{m}}f(X_{r}^{t_{0},x_{0}})\textup{d}r+\varphi(\tau_{m},X_{\tau_{m}}^{t_{0},x_{0}})\Big]-\varepsilon.
\end{align*}
An application of It\^o's formula yields
\[
0\geqslant\mathbb{E}\Big[\int_{t_{0}}^{\tau_{m}}\Big(\frac{\partial\varphi}{\partial t}(r,X_{r}^{t_{0},x_{0}})
+\mathcal{L}\varphi(r,X_{r}^{t_{0},x_{0}})+f(X_{r}^{t_{0},x_{0}})\Big)\textup{d}r\Big]-\varepsilon.
\]
Taking $\varepsilon=\bar{\varepsilon}(t_{m}-t_{0})$, with $\bar{\varepsilon}>0$, and dividing both sides by $t_{m}-t_{0}$, we get
\[
\bar{\varepsilon}\geqslant\mathbb{E}\Big[\frac{1}{t_{m}-t_{0}}\int_{t_{0}}^{t_{m}}\Big(\frac{\partial\varphi}{\partial t}(r,X_{r}^{t_{0},x_{0}})
+\mathcal{L}\varphi(r,X_{r}^{t_{0},x_{0}})+f(X_{r}^{t_{0},x_{0}})\Big)\mathbbm{1}_{\{r\leqslant\tau_{m}\}}\textup{d}r\Big].
\]
Sending $m$ to $\infty$, we end up with
\[
\bar{\varepsilon}\geqslant\frac{\partial\varphi}{\partial t}(t_{0},x_{0})+\mathcal{L}\varphi(t_{0},x_{0})+f(x_{0}).
\]
Therefore we find
\[
-\frac{\partial\varphi}{\partial t}(t_{0},x_{0})-\mathcal{L}\varphi(t_{0},x_{0})-f(x_{0})\geqslant0,
\]
which is the supersolution property. The subsolution property is proved analogously.
\end{proof}

We end this section providing another definition of viscosity solution to the HJBI equation by means of jets, needed later in the proof of the Comparison Theorem (Theorem \ref{T:Uniq}). In the following definition we denote by $\mathbb{S}(n)$ the set of symmetric matrices of dimension $n$.
\begin{definition}
Let $V:[0,T]\times\mathbb{R}^{n}\rightarrow\mathbb{R}$ be a lower semicontinuous function, then we denote by $J^{2,-}V(t,x)$ the parabolic subjet of $V$ at $(t,x)\in[0,T)\times\mathbb{R}^{n}$ as the set of triples $(p,q,M)\in\mathbb{R}\times\mathbb{R}^{n}\times\mathbb{S}(n)$ such that
\begin{align*}
V(s,y)\geqslant&\,V(t,x)+p(s-t)+\langle q,y-x\rangle+\tfrac{1}{2}\langle M(y-x),y-x\rangle+ \\
&+o(|s-t|+|y-x|^{2}),
\end{align*}
as $s\rightarrow t$ $(s\rightarrow t^{+}$, when $t=0)$ and $y\rightarrow x$. We also introduce the parabolic limiting subjet of $V$ at $(t,x)\in[0,T)\times\mathbb{R}^{n}\colon$
\begin{align*}
\bar{J}^{2,-}V(t,x)=&\,\{(p,q,M)\in\mathbb{R}\times\mathbb{R}^{n}\times\mathbb{S}(n):\exists(t_{n},x_{n},p_{n},q_{n},M_{n})\in[0,T)\times \\
&\quad\times\mathbb{R}^{n}\times\mathbb{R}\times\mathbb{R}^{n}\times\mathbb{S}(n)\text{ such that }(p_{n},q_{n},M_{n})\in J^{2,-}V(t_{n},x_{n}) \\
&\quad\text{and }(t_{n},x_{n},V(t_{n},x_{n}),p_{n},q_{n},M_{n})\rightarrow(t,x,V(t,x),p,q,M)\}.
\end{align*}
When $V$ is an upper semicontinuous function on $[0,T]\times\mathbb{R}^{n}$, we define the parabolic superjet $J^{2,+}V(t,x)$ and the parabolic limiting superjet $\bar{J}^{2,+}V(t,x)$ of $V$ at $(t,x)\in[0,T)\times\mathbb{R}^{n}$ by
\[
J^{2,+}V(t,x)=-J^{2,-}(-V)(t,x) \qquad \text{ and } \qquad \bar{J}^{2,+}V(t,x)=-\bar{J}^{2,-}(-V)(t,x).
\]
\end{definition}
Then we have the following result, whose standard proof is omitted.
\begin{lemma}
\label{L:Jets}
Let $V:[0,T]\times\mathbb{R}^{n}\rightarrow\mathbb{R}$ be a lower (resp., upper) semicontinuous function. Then $V$ is a viscosity supersolution (resp., subsolution) to the HJBI equation (\ref{E:HJBI}) if and only if
\begin{enumerate}[\upshape (i)]
    \item for every $(t,x)\in[0,T)\times\mathbb{R}^{n}$ and $(p,q,M)\in\bar{J}^{2,-}V(t,x)$ $(\text{resp., }\bar{J}^{2,+}V(t,x))$ we have
        \begin{align}
        &\max\Big\{\min\Big[p-\langle b(x),q\rangle-\tfrac{1}{2}\textup{tr}\big[(\sigma\sigma')(x)M\big]-f(x),V(t,x) \\
        &\hspace{3cm}-\mathcal{H}_{\sup}^{c}V(t,x)\Big],V(t,x)-\mathcal{H}_{\inf}^{\chi}V(t,x)\Big\}\geqslant0 \quad \text{\textup{(}resp.,
        $\leqslant0$\textup{)}}. \notag
        \end{align}
    \item for every $x\in\mathbb{R}^{n}$ we have
        \begin{align}
        &\max\Big\{\min\Big[V(T,x)-g(x),V(T,x)-\mathcal{H}_{\sup}^{c}V(T,x)\Big], \\
        &\hspace{5cm}V(T,x)-\mathcal{H}_{\inf}^{\chi}V(T,x)\Big\}\geqslant0 \quad \text{\textup{(}resp., $\leqslant0$\textup{)}}.
        \notag
        \end{align}
\end{enumerate}
\end{lemma}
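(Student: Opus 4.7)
The plan is to prove the equivalence for the interior condition (i) only; the terminal condition (ii) is literally the same expression in both formulations and needs no argument. Within (i), the triple $(p,q,M)$ enters only the local second-order quantity $p-\langle b(x),q\rangle-\tfrac{1}{2}\textup{tr}[(\sigma\sigma')(x)M]-f(x)$, while the two obstacle terms $V(t,x)-\mathcal{H}_{\sup}^{c}V(t,x)$ and $V(t,x)-\mathcal{H}_{\inf}^{\chi}V(t,x)$ depend only on $V$ and on $(t,x)$ and are identical in the two formulations. Thus the statement reduces to the classical jet-versus-test-function correspondence for a second-order local operator, plus a verification that passage to limiting jets is compatible with the nonlocal obstacles.

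For the direction \emph{test function implies jet} I would treat the supersolution case, the subsolution case being symmetric. Fix $(t,x)\in[0,T)\times\mathbb{R}^{n}$ and $(p,q,M)\in J^{2,-}V(t,x)$. By a standard construction (cf.\ Crandall-Ishii-Lions, Proposition 2.4), there exists $\varphi\in C^{1,2}([0,T)\times\mathbb{R}^{n})$ with $\partial_{t}\varphi(t,x)=p$, $\nabla_{x}\varphi(t,x)=q$, $\nabla_{x}^{2}\varphi(t,x)=M$, $V(t,x)=\varphi(t,x)$, and such that $(t,x)$ is a strict local minimum of $V-\varphi$. Applying the supersolution inequality (\ref{E:HJBIvisc}) to this $\varphi$ yields exactly the jet inequality at $(p,q,M)$. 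For an element of the limiting subjet, pick an approximating sequence $(t_{n},x_{n},p_{n},q_{n},M_{n})$ as in the definition; the local term passes to the limit by continuity of $b,\sigma,f$, and the whole inequality passes to the limit using $V(t_{n},x_{n})\to V(t,x)$ together with the appropriate semicontinuity of the two nonlocal operators.

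The reverse direction \emph{jet implies test function} is immediate: if $\varphi\in C^{1,2}$ is such that $(t,x)$ is a local minimum of $V-\varphi$ with $V(t,x)=\varphi(t,x)$, then a Taylor expansion of $\varphi$ around $(t,x)$ shows that the triple $(\partial_{t}\varphi(t,x),\nabla_{x}\varphi(t,x),\nabla_{x}^{2}\varphi(t,x))$ lies in $J^{2,-}V(t,x)\subset\bar{J}^{2,-}V(t,x)$, and the jet inequality at this triple is exactly the test-function inequality (\ref{E:HJBIvisc}).

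The only delicate point is the stability of the nonlocal terms along the limiting-subjet procedure. Here one exploits that, by the very definition of $\bar{J}^{2,-}V(t,x)$, one has $V(t_{n},x_{n})\to V(t,x)$ along the approximating sequence, that $c$ and $\chi$ are continuous, and that $\mathcal{H}_{\sup}^{c}V$ inherits lower semicontinuity as a supremum of lower semicontinuous functions (respectively, $\mathcal{H}_{\inf}^{\chi}V$ is handled analogously in the subsolution case); combined with the monotonicity of $\max$ and $\min$, this suffices to propagate the inequality to the limit. The subsolution case follows the same pattern with inf replaced by sup and inequalities reversed, so the argument is standard and is omitted in the paper.
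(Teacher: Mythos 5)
The paper gives no argument for this lemma (it is dismissed as ``standard''), so your proposal can only be judged on its own terms. Its architecture is the right one: condition (ii) is verbatim the same in both formulations; the jet enters only the local term; the direction ``test function $\Rightarrow$ jet'' for plain jets follows from the Crandall--Ishii--Lions construction of a touching test function with prescribed derivatives, and the converse is a Taylor expansion. (One small point to check there: with the paper's convention $V(s,y)\geqslant V(t,x)+p(s-t)+\dots$ a touching test function gives $p=\partial_{t}\varphi(t,x)$, so the local term produced by (\ref{E:HJBIvisc}) is $-p-\langle b(x),q\rangle-\dots$; your claim that the two inequalities match ``exactly'' silently absorbs this sign, which you should reconcile with the displayed form of (i).)

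The genuine gap is in the stability step for limiting jets, precisely the point you flag as delicate. In the supersolution case the inequality at $(t_{n},x_{n})$ may hold, along the whole approximating sequence, only through the alternative $V(t_{n},x_{n})-\mathcal{H}_{\inf}^{\chi}V(t_{n},x_{n})\geqslant 0$, and to pass this to the limit you need $\liminf_{n}\mathcal{H}_{\inf}^{\chi}V(t_{n},x_{n})\geqslant\mathcal{H}_{\inf}^{\chi}V(t,x)$. Your semicontinuity remark covers only $\mathcal{H}_{\sup}^{c}V$ (a supremum of l.s.c.\ functions, hence l.s.c.), but $\mathcal{H}_{\inf}^{\chi}V$ is an \emph{infimum} of l.s.c.\ functions over the non-compact cone $\mathscr{V}$, and such an infimum need not be l.s.c.: one can build a bounded l.s.c.\ $V$ for which the near-optimal shifts $z_{n}$ escape to infinity and $\mathcal{H}_{\inf}^{\chi}V$ jumps up at the limit point, so the obstacle inequality holds along the sequence but fails in the limit. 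The symmetric problem occurs for $\mathcal{H}_{\sup}^{c}V$ with $V$ u.s.c.\ in the subsolution case. As stated for merely semicontinuous $V$, then, the ``only if'' direction is not justified by your argument. The repair is to use the regularity actually available where the lemma is invoked: in Theorem \ref{T:Uniq} it is applied to functions uniformly continuous on $[0,T)\times\mathbb{R}^{n}$, and for such $V$ both $\mathcal{H}_{\sup}^{c}V$ and $\mathcal{H}_{\inf}^{\chi}V$ are continuous (they are sup/inf of a family with a common modulus of continuity, using the uniform $1/2$-H\"older continuity in time of $c$ and $\chi$ from \textbf{(H$_{c,\chi}$)}), so the limit passage goes through. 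You should either add this hypothesis to your argument or state explicitly that the equivalence is proved under it.
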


\section{Uniqueness}
\label{S:Uniqueness}
We prove that the HJBI equation (\ref{E:HJBI}) admits a unique viscosity solution. As a consequence, the lower and upper value functions coincide, since they are both viscosity solutions to (\ref{E:HJBI}). Hence the stochastic differential game has a value.

Before proving the Comparison Theorem, we need the following two technical lemmas.
\begin{lemma}
\label{L:Uniq}
Let $V,U\colon[0,T]\times\mathbb{R}^{n}\rightarrow\mathbb{R}$ be a viscosity subsolution and a viscosity supersolution to the HJBI equation (\ref{E:HJBI}), respectively, and suppose that assumption \textup{\textbf{(H$_{c,\chi}$)}} holds true. Let $(\hat{t},x_{0})\in[0,T]\times\mathbb{R}^{n}$ be such that
\begin{equation}
\label{E:VU}
V(\hat{t},x_{0})\leqslant\mathcal{H}_{\sup}^{c}V(\hat{t},x_{0}), \qquad \qquad \qquad U(\hat{t},x_{0})<\mathcal{H}_{\inf}^{\chi}U(\hat{t},x_{0})
\end{equation}
or
\[
U(\hat{t},x_{0})\geqslant\mathcal{H}_{\inf}^{\chi}U(\hat{t},x_{0}).
\]
Then for every $\varepsilon>0$ there exists $\hat{x}\in\mathbb{R}^{n}$ such that
\[
V(\hat{t},x_{0})-U(\hat{t},x_{0})\leqslant V(\hat{t},\hat{x})-U(\hat{t},\hat{x})+\varepsilon
\]
and
\[
V(\hat{t},\hat{x})>\mathcal{H}_{\sup}^{c}V(\hat{t},\hat{x}), \qquad \qquad \qquad U(\hat{t},\hat{x})<\mathcal{H}_{\inf}^{\chi}U(\hat{t},\hat{x}).
\]
\end{lemma}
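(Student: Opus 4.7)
The plan is to produce $\hat{x}$ as the terminal point of a finite iterative procedure that starts at $x_{0}$ and, at each step, applies either a $y$-impulse in $\mathscr{U}$ (when the $V$-obstacle is not yet strictly satisfied) or a $z$-impulse in $\mathscr{V}$ (when the $U$-obstacle is not yet strictly satisfied). The per-step loss of $\varphi:=V-U$ will be kept below an arbitrarily small $\delta$, and termination will follow from $\inf c,\inf\chi>0$ (assumption \textbf{(H$_{c,\chi}$)}) together with the boundedness of $V$ and $U$ assumed in the comparison setting.

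The engine of the proof is a dichotomy coming from the sub-/supersolution properties at time $\hat{t}$. Since the nonlocal terms $V-\mathcal{H}^{\chi}_{\inf}V$ and $U-\mathcal{H}^{\chi}_{\inf}U$ inside the HJBI $\max$ do not involve any test function, and since test functions exist at every point for continuous $V,U$, one obtains $V(\hat{t},\cdot)\leqslant\mathcal{H}^{\chi}_{\inf}V(\hat{t},\cdot)$ everywhere, and at any point where $U(\hat{t},x)<\mathcal{H}^{\chi}_{\inf}U(\hat{t},x)$ the other disjunct of the outer $\max$ in the supersolution inequality must activate, forcing $U(\hat{t},x)\geqslant\mathcal{H}^{c}_{\sup}U(\hat{t},x)$.

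The iteration is: start at $x_{0}$, fix $\delta>0$. At the current $x_{k}$, if $V(\hat{t},x_{k})>\mathcal{H}^{c}_{\sup}V(\hat{t},x_{k})$ and $U(\hat{t},x_{k})<\mathcal{H}^{\chi}_{\inf}U(\hat{t},x_{k})$, stop and set $\hat{x}:=x_{k}$; otherwise, with strict priority, if $U(\hat{t},x_{k})\geqslant\mathcal{H}^{\chi}_{\inf}U(\hat{t},x_{k})$ pick $z\in\mathscr{V}$ with $U(\hat{t},x_{k}+z)+\chi(\hat{t},z)\leqslant U(\hat{t},x_{k})+\delta$ and put $x_{k+1}:=x_{k}+z$, else (so necessarily $V\leqslant\mathcal{H}^{c}_{\sup}V$ and $U<\mathcal{H}^{\chi}_{\inf}U$ there) pick $y\in\mathscr{U}$ with $V(\hat{t},x_{k}+y)-c(\hat{t},y)\geqslant V(\hat{t},x_{k})-\delta$ and put $x_{k+1}:=x_{k}+y$. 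Both alternatives in the lemma's hypothesis guarantee we enter the loop at $k=0$. For a $z$-step, combining $U(x_{k+1})-U(x_{k})\leqslant-\chi(\hat{t},z)+\delta$ with the always-true subsolution bound $V(x_{k+1})\geqslant V(x_{k})-\chi(\hat{t},z)$ yields $\varphi(x_{k+1})\geqslant\varphi(x_{k})-\delta$; for a $y$-step, the priority rule ensures $U(\hat{t},x_{k})<\mathcal{H}^{\chi}_{\inf}U(\hat{t},x_{k})$, hence the dichotomy gives $U(x_{k+1})-U(x_{k})\leqslant c(\hat{t},y)$, and together with $V(x_{k+1})-V(x_{k})\geqslant c(\hat{t},y)-\delta$ we again get $\varphi(x_{k+1})\geqslant\varphi(x_{k})-\delta$.

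Each $y$-step strictly raises $V$ by at least $\inf c-\delta$ and each $z$-step strictly lowers $U$ by at least $\inf\chi-\delta$; fixing $\delta\leqslant\tfrac{1}{2}\min(\inf c,\inf\chi)$ bounds the total number of iterations by some $N$ depending only on the oscillations of $V,U$ and on $\inf c,\inf\chi$. Replacing $\delta$ by $\min\bigl(\tfrac{1}{2}\inf c,\tfrac{1}{2}\inf\chi,\varepsilon/(N+1)\bigr)$, the cumulative loss is $\leqslant N\delta<\varepsilon$, and the stopping criterion gives $\hat{x}$ with the two required strict obstacle inequalities. The main subtlety I expect is the priority rule: the $y$-step loss estimate rests essentially on the supersolution dichotomy $U\geqslant\mathcal{H}^{c}_{\sup}U$, which is available only when $U<\mathcal{H}^{\chi}_{\inf}U$ at the current point, so running $z$-steps whenever possible is not cosmetic but precisely what keeps the per-step bound on $\varphi$ valid.
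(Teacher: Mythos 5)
Your per-step bookkeeping is correct and coincides with the paper's local estimates: the subsolution inequality $V\leqslant\mathcal{H}_{\inf}^{\chi}V$ makes a near-optimal $z$-impulse lose at most $\delta$ from $V-U$, and the supersolution dichotomy ($U<\mathcal{H}_{\inf}^{\chi}U$ forces $U\geqslant\mathcal{H}_{\sup}^{c}U$) does the same for a near-optimal $y$-impulse, so your priority rule is the right one. The genuine gap is termination. A $z$-step lowers $U$ by at least $\inf\chi-\delta$, but it may also lower $V$ by up to $\chi(\hat{t},z)$; a $y$-step raises $V$ by at least $\inf c-\delta$, but it may also raise $U$ by up to $c(\hat{t},y)$. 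Hence neither $V$ nor $U$ is monotone along the orbit, and an alternating sequence of $z$- and $y$-steps with $c(\hat{t},y)\approx\chi(\hat{t},z)$ returns both functions, up to $O(\delta)$, to their previous values while staying bounded: boundedness of $V$ and $U$ together with $\inf c,\inf\chi>0$ does \emph{not} bound the total number of iterations (it only bounds the length of a consecutive block of steps of the same type). There is also a circularity in that your $N$ is produced from a first choice of $\delta$ and $\delta$ is then re-chosen from $N$, but that is cosmetic compared with the missing bound on $N$ itself.

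The missing ingredient is exactly the pair of conditions (\ref{E:c}) and (\ref{E:chi}) with the strictly positive gap $h$, which your argument never invokes and which the introduction of the paper explicitly flags as being needed precisely in the uniqueness proof. The paper's proof of this lemma uses them to show that the iteration stops after at most three impulses: by (\ref{E:chi}), a near-optimal $z$-impulse lands at a point where $U<\mathcal{H}_{\inf}^{\chi}U$ holds with margin $h(\hat{t})-O(\varepsilon)$, so no second $z$-impulse is ever needed at that point; by (\ref{E:c}), a near-optimal $y$-impulse lands where $V>\mathcal{H}_{\sup}^{c}V$ holds with a margin that moreover survives one further $z$-impulse (this is why the condition couples $c$ and $\chi$ and why $\mathscr{V}\subset\mathscr{U}$ is assumed). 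To repair your proof you should replace the boundedness-based counting by these two observations; as written, the iteration may chatter indefinitely and the required $\hat{x}$ is never produced.
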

\begin{proof}
Fix $\varepsilon>0$. We divide the proof into two steps.\\
\emph{Step 1.} Without loss of generality, we can suppose that (\ref{E:VU}) holds at $(\hat{t},x_{0})$. As a matter of fact, suppose that $U(\hat{t},x_{0})\geqslant\mathcal{H}_{\inf}^{\chi}U(\hat{t},x_{0})$. Let $\alpha\in(0,1)$ to be chosen later, then there exists $z_{0}\in\mathscr{V}$ for which
\[
\mathcal{H}_{\inf}^{\chi}U(\hat{t},x_{0})\geqslant U(\hat{t},x_{0}+z_{0})+\chi(\hat{t},z_{0})-\alpha\varepsilon.
\]
Since $V$ is a subsolution, it satisfies $V(\hat{t},x_{0})\leqslant\mathcal{H}_{\inf}^{\chi}V(\hat{t},x_{0})$, therefore
\[
V(\hat{t},x_{0})-U(\hat{t},x_{0})\leqslant V(\hat{t},x_{0}+z_{0})-U(\hat{t},x_{0}+z_{0})+\alpha\varepsilon.
\]
Using condition (\ref{E:chi}), taking $\alpha$ sufficiently small, we can show that at $(\hat{t},x_{0}+z_{0})$ we have
\[
U(\hat{t},x_{0}+z_{0})<\mathcal{H}_{\inf}^{\chi}U(\hat{t},x_{0}+z_{0}).
\]
If $V(\hat{t},x_{0}+z_{0})>\mathcal{H}_{\sup}^{c}V(\hat{t},x_{0}+z_{0})$, we take $(\hat{t},\hat{x}):=(\hat{t},x_{0}+z_{0})$. Otherwise at $(\hat{t},x_{0}+z_{0})$ condition (\ref{E:VU}) holds true. \\
\emph{Step 2.} Suppose that (\ref{E:VU}) holds at $(\hat{t},x_{0})$. Let $\beta\in(0,1)$ to be chosen later, then there exists $y_{0}\in\mathscr{U}$ such that
\[
\mathcal{H}_{\sup}^{c}V(\hat{t},x_{0})\leqslant V(\hat{t},x_{0}+y_{0})-c(\hat{t},y_{0})+\beta\varepsilon.
\]
Since $U$ is a supersolution, it satisfies $U(\hat{t},x_{0})\geqslant\mathcal{H}_{\sup}^{c}U(\hat{t},x_{0})$, therefore
\[
V(\hat{t},x_{0})-U(\hat{t},x_{0})\leqslant V(\hat{t},x_{0}+y_{0})-U(\hat{t},x_{0}+y_{0})+\beta\varepsilon.
\]
Using condition (\ref{E:c}), taking $\beta$ sufficiently small, we can show that at $(\hat{t},x_{0}+y_{0})$ we have
\[
V(\hat{t},x_{0}+y_{0})>\mathcal{H}_{\sup}^{c}V(\hat{t},x_{0}+y_{0}).
\]
If $U(\hat{t},x_{0}+y_{0})<\mathcal{H}_{\inf}^{\chi}U(\hat{t},x_{0}+y_{0})$, we take $(\hat{t},\hat{x}):=(\hat{t},x_{0}+y_{0})$. Otherwise we can proceed as in \emph{Step 1} and we find $z_{0}\in\mathscr{V}$ for which
\[
\mathcal{H}_{\inf}^{\chi}U(\hat{t},x_{0}+y_{0})\geqslant U(\hat{t},x_{0}+y_{0}+z_{0})+\chi(\hat{t},z_{0})-\beta\varepsilon,
\]
\[
V(\hat{t},x_{0})-U(\hat{t},x_{0})\leqslant V(\hat{t},x_{0}+y_{0}+z_{0})-U(\hat{t},x_{0}+y_{0}+z_{0})+2\beta\varepsilon
\]
and
\[
U(\hat{t},x_{0}+y_{0}+z_{0})<\mathcal{H}_{\inf}^{\chi}U(\hat{t},x_{0}+y_{0}+z_{0}).
\]
Then, using condition (\ref{E:c}), we find (possibly reducing $\beta$)
\[
V(\hat{t},x_{0}+y_{0}+z_{0})>\mathcal{H}_{\sup}^{c}V(\hat{t},x_{0}+y_{0}+z_{0}).
\]
Therefore we define $(\hat{t},\hat{x}):=(\hat{t},x_{0}+y_{0}+z_{0})$.
\end{proof}
\begin{lemma}
\label{L:Uniq2}
Let $V,U\colon[0,T]\times\mathbb{R}^{n}\rightarrow\mathbb{R}$ be uniformly continuous on $[0,T)\times\mathbb{R}^{n}$ and suppose that assumption \textup{\textbf{(H$_{c,\chi}$)}} holds true. If $(\hat{t},\hat{x})\in[0,T)\times\mathbb{R}^{n}$ is such that
\[
V(\hat{t},\hat{x})>\mathcal{H}_{\sup}^{c}V(\hat{t},\hat{x}) \qquad \text{and} \qquad U(\hat{t},\hat{x})<\mathcal{H}_{\inf}^{\chi}U(\hat{t},\hat{x}),
\]
then there exists $\delta>0$ for which
\[
V(t,x)>\mathcal{H}_{\sup}^{c}V(t,x) \qquad \text{and} \qquad U(t,x)<\mathcal{H}_{\inf}^{\chi}U(t,x),
\]
when $(t,x)\in[(\hat{t}-\delta)\vee0,\hat{t}+\delta]\times\bar{B}_{\delta}(\hat{x})$, with $\hat{t}+\delta<T$.
\end{lemma}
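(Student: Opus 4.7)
The statement asks that the two strict inequalities
\[
V(t,x)>\mathcal{H}^{c}_{\sup}V(t,x),\qquad U(t,x)<\mathcal{H}^{\chi}_{\inf}U(t,x)
\]
persist on a parabolic neighborhood of $(\hat t,\hat x)$. Since $\mathcal{H}^{c}_{\sup}$ and $\mathcal{H}^{\chi}_{\inf}$ are nonlocal, with sup/inf taken over the (possibly unbounded) cones $\mathscr{U}$ and $\mathscr{V}$, I cannot simply invoke continuity of the maps $V-\mathcal{H}^{c}_{\sup}V$ and $U-\mathcal{H}^{\chi}_{\inf}U$. The idea is instead to pass a small shift $(t,x)\leadsto(\hat t,\hat x)$ through the sup/inf by using uniformity in $y$ and $z$.

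First I would quantify the strict gaps by setting $\alpha:=V(\hat t,\hat x)-\mathcal{H}^{c}_{\sup}V(\hat t,\hat x)>0$ and $\beta:=\mathcal{H}^{\chi}_{\inf}U(\hat t,\hat x)-U(\hat t,\hat x)>0$. By definition of sup and inf this is equivalent to the pointwise bounds
\[
V(\hat t,\hat x+y)-c(\hat t,y)\leqslant V(\hat t,\hat x)-\alpha\qquad\forall\,y\in\mathscr{U},
\]
and, symmetrically, $U(\hat t,\hat x+z)+\chi(\hat t,z)\geqslant U(\hat t,\hat x)+\beta$ for every $z\in\mathscr{V}$.

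Next I would propagate these bounds to a neighborhood. Using the uniform continuity of $V$ and $U$ on $[0,T)\times\mathbb{R}^{n}$ together with the $1/2$-H\"older continuity in time of $c$ and $\chi$, uniform in the other variable by \textup{\textbf{(H$_{c,\chi}$)}}, for any $\varepsilon>0$ I can pick $\delta_{0}>0$ such that
\[
|V(t',x')-V(t,x)|+|U(t',x')-U(t,x)|<2\varepsilon
\]
whenever $(t',x'),(t,x)\in[0,T)\times\mathbb{R}^{n}$ satisfy $|t-t'|+|x-x'|<\delta_{0}$, and $|c(t,y)-c(\hat t,y)|<\varepsilon$, $|\chi(t,z)-\chi(\hat t,z)|<\varepsilon$ hold for \emph{all} $y\in\mathscr{U}$, $z\in\mathscr{V}$ when $|t-\hat t|<\delta_{0}$. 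The crucial feature is that the bound on $V(t,x+y)-V(\hat t,\hat x+y)$ depends only on the shift $|t-\hat t|+|x-\hat x|$ and is independent of $y$; this is exactly where uniform continuity (rather than mere continuity) is needed.

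Finally I would choose $\varepsilon:=\min(\alpha,\beta)/4$, take $\delta\in(0,\delta_{0}/2)$ with $\hat t+\delta<T$, and for any $(t,x)\in[(\hat t-\delta)\vee0,\hat t+\delta]\times\bar B_{\delta}(\hat x)$ and any $y\in\mathscr{U}$ chain the estimates
\[
V(t,x+y)-c(t,y)\leqslant V(\hat t,\hat x+y)-c(\hat t,y)+2\varepsilon\leqslant V(\hat t,\hat x)-\alpha+2\varepsilon\leqslant V(t,x)+3\varepsilon-\alpha\leqslant V(t,x)-\alpha/4.
\]
Taking $\sup_{y\in\mathscr{U}}$ yields $\mathcal{H}^{c}_{\sup}V(t,x)\leqslant V(t,x)-\alpha/4<V(t,x)$. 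The same computation with signs flipped and $(y,c,\alpha)$ replaced by $(z,\chi,\beta)$ gives $\mathcal{H}^{\chi}_{\inf}U(t,x)\geqslant U(t,x)+\beta/4>U(t,x)$. The only point requiring care is the uniformity over $y$ and $z$ when commuting the small perturbation with the $\sup$ and $\inf$; once that is secured, the proof is essentially a triangle inequality.
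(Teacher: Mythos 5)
Your proposal is correct and follows essentially the same route as the paper: quantify the strict gap at $(\hat t,\hat x)$ as a uniform pointwise bound over all $y\in\mathscr{U}$ (resp.\ $z\in\mathscr{V}$), then use the uniform continuity of $V,U$ and the uniform $1/2$-H\"older continuity in time of $c,\chi$ to transfer that bound to nearby $(t,x)$ independently of $y$, and close with the triangle inequality. The point you flag as crucial --- that the perturbation commutes with the sup/inf only because all estimates are uniform in the nonlocal variable --- is exactly the mechanism of the paper's proof.
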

\begin{proof}
Since $V(\hat{t},\hat{x})>\mathcal{H}_{\sup}^{c}V(\hat{t},\hat{x})$, there exists $\lambda>0$ such that
\[
V(\hat{t},\hat{x})>V(\hat{t},\hat{x}+y)-c(\hat{t},y)+\lambda, \qquad \forall\,y\in\mathscr{U}.
\]
Let $\varepsilon>0$ to be fixed later, then from the uniform continuity of $V$ there exists $\delta_{V}\in(0,\varepsilon)$ such that
\[
|V(t,x)-V(t',x')|\leqslant\varepsilon,
\]
when $|t-t'|\leqslant\delta_{V}$ and $|x-x'|\leqslant\delta_{V}$. Let $(t,x)\in[(\hat{t}-\delta_{V})\vee0,(\hat{t}+\delta_{V})\wedge T)\times\bar{B}_{\delta_{V}}(\hat{x})$ and $y\in\mathscr{U}$, then
\begin{align*}
V(\hat{t},\hat{x})-V(t,x)+V(t,x)>&\,V(\hat{t},\hat{x}+y)-V(t,x+y)+V(t,x+y) \\
&-c(\hat{t},y)+c(t,y)-c(t,y)+\lambda.
\end{align*}
Using the $1/2$-H\"older continuity of $c$ with respect to time, we find that there exists a constant $C>0$ such that
\[
V(t,x)>V(t,x+y)-c(t,y)-2\varepsilon-C\sqrt{\varepsilon}+\lambda.
\]
Therefore, taking $\varepsilon$ sufficiently small, we have
\[
V(t,x)>\mathcal{H}_{\sup}^{c}V(t,x), \qquad (t,x)\in[(\hat{t}-\delta_{V})\vee0,(\hat{t}+\delta_{V})\wedge T)\times\bar{B}_{\delta_{V}}(\hat{x}).
\]
Analogously, we can prove that there exists $\delta_{U}>0$ such that
\[
U(t,x)<\mathcal{H}_{\inf}^{\chi}U(t,x), \qquad (t,x)\in[(\hat{t}-\delta_{U})\vee0,(\hat{t}+\delta_{U})\wedge T)\times\bar{B}_{\delta_{U}}(\hat{x}).
\]
Taking $\delta=\min\{\delta_{V},\delta_{U},(T-\hat{t})/2\}$ we deduce the thesis.
\end{proof}

We are now in a position to prove the Comparison Theorem.
\begin{theorem}[Comparison Theorem]
\label{T:Uniq}
Let $V$ and $U$ be a viscosity subsolution and a viscosity supersolution to the HJBI equation (\ref{E:HJBI}), respectively. Suppose that assumptions \textup{\textbf{(H$_{b,\sigma}$)}}, \textup{\textbf{(H$_{f,g}$)}} and \textup{\textbf{(H$_{c,\chi}$)}} hold true and that the functions $V$ and $U$ are uniformly continuous on $[0,T)\times\mathbb{R}^{n}$. Then $V\leqslant U$ on $[0,T]\times\mathbb{R}^{n}$.
\end{theorem}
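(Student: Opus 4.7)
The plan is to argue by contradiction, handling the terminal condition separately and then reducing the quasi-variational inequality on $[0,T)\times\mathbb{R}^{n}$ to a classical parabolic comparison via Lemmas~\ref{L:Uniq} and~\ref{L:Uniq2}, followed by the Crandall--Ishii machinery.

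\emph{Terminal step.} First I would show $V(T,x) \leqslant U(T,x)$ for every $x\in\mathbb{R}^{n}$. Suppose not and pick $x_{0}$ with $V(T,x_{0}) > U(T,x_{0})$. Since $V$ is a subsolution, the terminal condition~(\ref{E:HJBIfinal}) always gives $V(T,x_{0}) \leqslant \mathcal{H}_{\inf}^{\chi}V(T,x_{0})$. If $U(T,x_{0}) \geqslant \mathcal{H}_{\inf}^{\chi}U(T,x_{0})$, or if~(\ref{E:VU}) holds at $(T,x_{0})$, Lemma~\ref{L:Uniq} produces $\hat{x}$ with $V(T,\hat{x}) > U(T,\hat{x})$ and
\[
V(T,\hat{x}) > \mathcal{H}_{\sup}^{c}V(T,\hat{x}), \qquad U(T,\hat{x}) < \mathcal{H}_{\inf}^{\chi}U(T,\hat{x}).
\]
Otherwise these two strict inequalities already hold at $(T,x_{0})$ and I set $\hat{x}:=x_{0}$. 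At $\hat{x}$ the strict obstacle inequalities reduce~(\ref{E:HJBIfinal}) to $V(T,\hat{x}) \leqslant g(\hat{x})$ (the subsolution inner min must have its first argument nonpositive) and $U(T,\hat{x}) \geqslant g(\hat{x})$ (the supersolution inner min must be nonnegative since the outer max-term is negative), contradicting $V(T,\hat{x}) > U(T,\hat{x})$.

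\emph{Interior step.} Suppose now, for contradiction, $M := \sup_{[0,T]\times\mathbb{R}^{n}}(V-U) > 0$. For $\varepsilon,\lambda,\mu > 0$ I consider the doubling functional
\[
\Phi_{\varepsilon}(t,s,x,y) := V(t,x) - U(s,y) - \frac{|x-y|^{2} + (t-s)^{2}}{2\varepsilon} - \lambda\bigl(\phi(x)+\phi(y)\bigr) - \frac{\mu}{T-t},
\]
with $\phi$ a coercive penalty such as $\log(1+|x|^{2})$. For $\lambda,\mu$ sufficiently small, $\sup\Phi_{\varepsilon} > 0$, and by the terminal step combined with the $\mu/(T-t)$ barrier the supremum is attained at some $(t_{\varepsilon},s_{\varepsilon},x_{\varepsilon},y_{\varepsilon})$ with $t_{\varepsilon},s_{\varepsilon} \in [0,T)$. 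Standard estimates yield $|x_{\varepsilon}-y_{\varepsilon}|^{2}/\varepsilon$ and $(t_{\varepsilon}-s_{\varepsilon})^{2}/\varepsilon \to 0$ and, up to a subsequence, $(t_{\varepsilon},x_{\varepsilon}),(s_{\varepsilon},y_{\varepsilon}) \to (\hat{t},\hat{x})\in[0,T)\times\mathbb{R}^{n}$ with $V(\hat{t},\hat{x}) - U(\hat{t},\hat{x}) > 0$. Lemma~\ref{L:Uniq} then yields $\hat{x}'$ with $V(\hat{t},\hat{x}') > \mathcal{H}_{\sup}^{c}V(\hat{t},\hat{x}')$ and $U(\hat{t},\hat{x}') < \mathcal{H}_{\inf}^{\chi}U(\hat{t},\hat{x}')$, and Lemma~\ref{L:Uniq2} gives a parabolic neighborhood $Q_{\delta}$ of $(\hat{t},\hat{x}')$ on which these strict inequalities persist. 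After re-centering the spatial penalty around $\hat{x}'$ so that the maximum of $\Phi_{\varepsilon}$ lies in $Q_{\delta}$, on $Q_{\delta}$ the two viscosity inequalities collapse to their classical parabolic parts: the subsolution property becomes $-\partial_{t}V - \mathcal{L}V - f \leqslant 0$ (the gap $V-\mathcal{H}_{\sup}^{c}V > 0$ selects the first argument of the inner min) while the supersolution property becomes $-\partial_{t}U - \mathcal{L}U - f \geqslant 0$ (the gap $U-\mathcal{H}_{\inf}^{\chi}U < 0$ drops the outer max). The parabolic Crandall--Ishii lemma then produces triples $(p_{1},q_{1},M_{1})\in\bar{J}^{2,+}V(t_{\varepsilon},x_{\varepsilon})$ and $(p_{2},q_{2},M_{2})\in\bar{J}^{2,-}U(s_{\varepsilon},y_{\varepsilon})$ with the standard matrix bound; inserting them into the classical viscosity inequalities via Lemma~\ref{L:Jets}, subtracting, and using the Lipschitzianity and boundedness of $b,\sigma,f$ from \textbf{(H$_{b,\sigma}$)} and \textbf{(H$_{f,g}$)} absorbs the $1/\varepsilon$ terms, leaves an $O(\lambda)$ remainder from the penalty, and preserves the strictly positive contribution $\mu/(T-t_{\varepsilon})^{2}$. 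Sending $\varepsilon\to 0$, then $\lambda\to 0$, then $\mu\to 0$ yields the desired contradiction.

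The main obstacle is the quasi-variational character of~(\ref{E:HJBI}): the obstacles $\mathcal{H}_{\sup}^{c}V$ and $\mathcal{H}_{\inf}^{\chi}U$ are nonlocal functionals of the solutions themselves, so the Crandall--Ishii lemma cannot be invoked at a point where the max--min structure degenerates onto the obstacles. Lemmas~\ref{L:Uniq} and~\ref{L:Uniq2}, which crucially exploit the strict subadditivity conditions~(\ref{E:c})--(\ref{E:chi}) on the cost functions (via the strictly positive function $h$), are precisely the device that lets one shift the comparison point into the open region where both obstacles are strictly inactive and the equation reduces to a semilinear parabolic HJB; coordinating this shift with the doubling-variables penalty so that the maximizing sequence remains in this region is the delicate technical point.
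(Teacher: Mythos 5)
Your overall strategy coincides with the paper's: a separate terminal-time argument combining Lemma \ref{L:Uniq} with the terminal condition (\ref{E:HJBIfinal}), and an interior argument that uses Lemmas \ref{L:Uniq} and \ref{L:Uniq2} to move the comparison point into a parabolic neighborhood where both implicit obstacles are strictly inactive, so that the equation degenerates to a purely parabolic one and the doubling-of-variables/Ishii machinery applies through Lemma \ref{L:Jets}. Your terminal step is essentially identical to the paper's Step 2 (and is even slightly more careful about the case where the hypotheses of Lemma \ref{L:Uniq} are not literally met because both strict gaps already hold). Two technical choices differ in the interior step. First, you generate the strict term needed for the final contradiction with a barrier $\mu/(T-t)$, whereas the paper rescales $V,U$ by $\mathrm{e}^{\rho t}$ so that the equation acquires a strictly monotone zeroth-order term $\rho W$; both devices are standard and interchangeable here. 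Second, and more substantively, you double variables globally with a coercive penalty and only afterwards try to steer the maximizer into the good region $Q_{\delta}$ by ``re-centering the spatial penalty around $\hat{x}'$''.

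As stated, that re-centering step is the one genuine soft spot. The point $\hat{x}'$ produced by Lemma \ref{L:Uniq} is only a near-maximizer of $V-U$: its value is within $\varepsilon$ of the value at $\hat{x}$, which is itself only within $O(\lambda+\mu)$ of the supremum $M$, so a re-centered penalty confines the new maximizing sequence to $Q_{\delta}$ only if its strength is calibrated against $\delta$ and against these error terms --- and $\delta$ from Lemma \ref{L:Uniq2} itself depends on the gaps at $(\hat{t},\hat{x}')$, hence on $\lambda$ and $\mu$. You must therefore fix $\lambda,\mu$ first, extract $(\hat{t},\hat{x}')$ and $\delta$, choose the re-centering penalty accordingly, and only then send $\varepsilon\to0$; re-centering alone does not prevent the maximizer from escaping to a region where an obstacle is active. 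The paper sidesteps this bookkeeping by reversing the order: it first fixes the compact neighborhood $I\times\bar{B}_{\delta}(\hat{x})$ on which, by Lemma \ref{L:Uniq2}, the strict inequalities $\tilde{V}>\mathcal{H}_{\sup}^{c}\tilde{V}$ and $\tilde{U}<\mathcal{H}_{\inf}^{\chi}\tilde{U}$ hold \emph{everywhere}, subtracts an explicit quartic bump from $\tilde{V}$ near the lateral boundary so that the maximum of $\tilde{V}-\tilde{U}$ over this compact set is attained at an interior point $(t_{0},x_{0})$, and only then doubles variables inside the set with the penalty $n|x-y|^{2}+|x-x_{0}|^{4}+|t-t_{0}|^{2}$; since the whole neighborhood is obstacle-free, it is immaterial where the doubled maximizers land within it. Your argument is repairable along exactly these lines, but the localization must be made quantitative rather than left to the re-centering.
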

\begin{proof}
We argue by contradiction, assuming that
\[
\sup_{[0,T]\times\mathbb{R}^{n}}(V-U)>0.
\]
\emph{Step 1.} Let $\rho>0$ and introduce the functions
\[
\tilde{V}(t,x)=\text{e}^{\rho t}V(t,x) \qquad \text{ and } \qquad \tilde{U}(t,x)=\text{e}^{\rho t}U(t,x),
\]
for all $(t,x)\in[0,T]\times\mathbb{R}^{n}$. Then $\tilde{V}$ (resp., $\tilde{U}$) is a viscosity subsolution (resp., supersolution) to the following equation:
\begin{equation}
\label{E:HJBIrho}
\begin{cases}
\vspace{.2cm}
\max\Big\{\min\Big[\rho W-\dfrac{\partial W}{\partial t}-\mathcal{L}W-\tilde{f},W-\tilde{\mathcal{H}}_{\sup}^{c}W\Big], \\
\hspace{5cm}W-\tilde{\mathcal{H}}_{\inf}^{\chi}W\Big\}=0, \qquad &\textup{on }[0,T)\times\mathbb{R}^{n}, \\
W(T,x)=\tilde{g}(x), &\forall\,x\in\mathbb{R}^{n},
\end{cases}
\end{equation}
where, for every $(t,x)\in[0,T]\times\mathbb{R}^{n}$, $\tilde{f}(t,x)=\text{e}^{\rho t}f(x)$, $\tilde{g}(x)=\text{e}^{\rho T}g(x)$,
\[
\tilde{\mathcal{H}}_{\sup}^{c}W(t,x)=\sup_{y\in\mathscr{U}}\{W(t,x+y)-\text{e}^{\rho t}c(t,y)\}
\]
and
\[
\tilde{\mathcal{H}}_{\inf}^{\chi}W(t,x)=\inf_{z\in\mathscr{V}}\{W(t,x+z)+\text{e}^{\rho t}\chi(t,z)\}.
\]
\emph{Step 2.} Suppose that there exists $x_{0}\in\mathbb{R}^{n}$ such that $\tilde{V}(T,x_{0})-\tilde{U}(T,x_{0})>0$. Using Lemma \ref{L:Uniq} (we apply this lemma to $V$ and $U$, expressing the results in terms of $\tilde{V}$ and $\tilde{U}$), we derive the existence of $\hat{x}\in\mathbb{R}^{n}$ such that
\begin{equation}
\label{E:SupT}
\tilde{V}(T,\hat{x})-\tilde{U}(T,\hat{x})>0
\end{equation}
and
\[
\tilde{V}(T,\hat{x})>\mathcal{H}_{\sup}^{c}\tilde{V}(T,\hat{x}), \qquad \tilde{U}(T,\hat{x})<\mathcal{H}_{\inf}^{\chi}\tilde{U}(T,\hat{x}).
\]
From the subsolution property of $\tilde{V}$, we find $\tilde{V}(T,\hat{x})\leqslant\tilde{g}(\hat{x})$. Analogously, using the supersolution property of $\tilde{U}$, we have $\tilde{U}(T,\hat{x})\geqslant\tilde{g}(\hat{x})$. Therefore $\tilde{V}(T,\hat{x})-\tilde{U}(T,\hat{x})\leqslant0$, a contradiction with (\ref{E:SupT}).\\
\emph{Step 3.} Suppose now that there exists $(\bar{t},\bar{x})\in[0,T)\times\mathbb{R}^{n}$ such that $\tilde{V}(\bar{t},\bar{x})-\tilde{U}(\bar{t},\bar{x})>0$. Then, from Lemma \ref{L:Uniq} and Lemma \ref{L:Uniq2}, we deduce the existence of $(\hat{t},\hat{x})\in[0,T)\times\mathbb{R}^{n}$ and $\delta>0$ such that
\[
\sup_{I\times\bar{B}_{\delta}(\hat{x})}(\tilde{V}-\tilde{U})\geqslant\tilde{V}(\hat{t},\hat{x})-\tilde{U}(\hat{t},\hat{x})>0
\]
and
\[
\tilde{V}(t,x)>\mathcal{H}_{\sup}^{c}\tilde{V}(t,x), \qquad \tilde{U}(t,x)<\mathcal{H}_{\inf}^{\chi}\tilde{U}(t,x),
\]
for all $(t,x)\in I\times\bar{B}_{\delta}(\hat{x})$, where $I:=[\hat{t}-\delta,\hat{t}+\delta]\subset[0,T)$. We can also assume, without loss of generality, that
\[
\tilde{V}(t,x)-\tilde{U}(t,x)\leqslant0, \qquad \forall(t,x)\in I\times\partial\bar{B}_{\delta}(\hat{x}).
\]
Indeed, if this is not the case, define
\[
\hat{V}(t,x)=\tilde{V}(t,x)-\frac{M}{15}\Big(\frac{16|x-\hat{x}|^{4}}{\delta^{4}}\mathbbm{1}_{\{|x-\hat{x}|>\delta/2\}}(x)-1\Big), \qquad \forall(t,x)\in[0,T)\times\mathbb{R}^{n},
\]
where
\[
M:=\sup_{I\times\bar{B}_{\delta}(\hat{x})}(\tilde{V}-\tilde{U}).
\]
Since $J^{2,+}\hat{V}(t,x)=J^{2,+}\tilde{V}(t,x)$, for all $(t,x)\in[0,T)\times\mathbb{R}^{n}$, we simply replace $\tilde{V}$ with~$\hat{V}$.\\
\emph{Step 4.} Let $(t_{0},x_{0})\in I\times B_{\delta}(\hat{x})$ be such that
\begin{equation}
\label{E:Sup}
\sup_{I\times\bar{B}_{\delta}(\hat{x})}(\tilde{V}-\tilde{U})=(\tilde{V}-\tilde{U})(t_{0},x_{0})>0.
\end{equation}
For every integer $n\geqslant1$, consider the following function
\[
\Theta_{n}(t,x,y)=\tilde{V}(t,x)-\tilde{U}(t,y)-\varphi_{n}(t,x,y),
\]
with
\[
\varphi_{n}(t,x,y)=n|x-y|^{2}+|x-x_{0}|^{4}+|t-t_{0}|^{2},
\]
for every $(t,x,y)\in[0,T]\times\mathbb{R}^{n}\times\mathbb{R}^{n}$. For all $n$ there exists $(t_{n},x_{n},y_{n})\in I\times\bar{B}_{\delta}(\hat{x})\times\bar{B}_{\delta}(\hat{x})$ attaining the maximum of $\Theta_{n}$ on $I\times\bar{B}_{\delta}(\hat{x})\times\bar{B}_{\delta}(\hat{x})$. Then we have, up to a subsequence, $(t_{n},x_{n},y_{n})\in I\times B_{\delta}(\hat{x})\times B_{\delta}(\hat{x})$ and, when $n\rightarrow\infty$,
\begin{enumerate}[\upshape (i)]
    \item $(t_{n},x_{n},y_{n})\rightarrow(t_{0},x_{0},x_{0})$;
    \item $n|x_{n}-y_{n}|^{2}\rightarrow0$;
    \item $\tilde{V}(t_{n},x_{n})-\tilde{U}(t_{n},y_{n})\rightarrow\tilde{V}(t_{0},x_{0})-\tilde{U}(t_{0},x_{0})$.
\end{enumerate}
Indeed, up to a subsequence, $(t_{n},x_{n},y_{n})\rightarrow(\bar{t},\bar{x},\bar{y})\in I\times\bar{B}_{\delta}(\hat{x})\times\bar{B}_{\delta}(\hat{x})$. Observe that, for all $n$, the following holds true:
\[
\tilde{V}(t_{0},x_{0})-\tilde{U}(t_{0},x_{0})=\Theta_{n}(t_{0},x_{0},x_{0})\leqslant\Theta_{n}(t_{n},x_{n},y_{n}).
\]
Then we find
\begin{align}
\label{E:Theta_n}
\tilde{V}(t_{0},x_{0})-\tilde{U}(t_{0},x_{0})
&\leqslant\liminf_{n\rightarrow\infty}\Theta_{n}(t_{n},x_{n},y_{n})\leqslant\limsup_{n\rightarrow\infty}\Theta_{n}(t_{n},x_{n},y_{n})\leqslant \\
&\leqslant\tilde{V}(\bar{t},\bar{x})-\tilde{U}(\bar{t},\bar{y})-\liminf_{n\rightarrow\infty}n|x_{n}-y_{n}|^{2}-|\bar{x}-x_{0}|^{4}-|\bar{t}-t_{0}|^{2}. \notag
\end{align}
As a consequence, up to a subsequence, $\lim_{n\rightarrow\infty}n|x_{n}-y_{n}|^{2}<\infty$, from which we deduce $\bar{x}=\bar{y}$. Again from (\ref{E:Theta_n}), using the optimality of $(t_{0},x_{0})$, we derive (i) and (ii). Consequently we get also (iii). Finally, since $x_{0}\in B_{\delta}(\hat{x})$, up to a subsequence, we deduce that $(t_{n},x_{n},y_{n})\in I\times B_{\delta}(\hat{x})\times B_{\delta}(\hat{x})$.\\
\emph{Step 5.} We may apply Ishii's lemma (see Theorem 8.3 in \cite{CIL92}) to the sequence $\{(t_{n},x_{n},y_{n})\}_{n}$: there exist $(p_{\tilde{V}}^{n},q_{\tilde{V}}^{n},M_{n})\in\bar{J}^{2,+}\tilde{V}(t_{n},x_{n})$ and $(p_{\tilde{U}}^{n},q_{\tilde{U}}^{n},N_{n})\in\bar{J}^{2,-}\tilde{U}(t_{n},y_{n})$ such that
\[
p_{\tilde{V}}^{n}-p_{\tilde{U}}^{n}=\frac{\partial\varphi_{n}}{\partial t}(t_{n},x_{n},y_{n})=2(t_{n}-t_{0}),
\]
\[
q_{\tilde{V}}^{n}=D_{x}\varphi_{n}(t_{n},x_{n},y_{n}), \qquad q_{\tilde{U}}^{n}=-D_{y}\varphi_{n}(t_{n},x_{n},y_{n})
\]
and
\[
\bigg(
\begin{array}{cc}
M_{n} & 0 \\
0 & - N_{n}
\end{array}
\bigg)\leqslant A_{n}+\frac{1}{2n}A_{n}^{2},
\]
where $A_{n}=D_{xy}^{2}\varphi_{n}(t_{n},x_{n},y_{n})$. From the viscosity subsolution property of $\tilde{V}$ we find
\[
\rho\tilde{V}(t_{n},x_{n})-p_{\tilde{V}}^{n}-\langle b(x_{n}),q_{\tilde{V}}^{n}\rangle-\tfrac{1}{2}\text{tr}[(\sigma\sigma')(x_{n})M_{n}]-f(x_{n})\leqslant0.
\]
Analogously, from the viscosity supersolution property of $\tilde{U}$ we have
\[
\rho\tilde{U}(t_{n},y_{n})-p_{\tilde{U}}^{n}-\langle b(y_{n}),q_{\tilde{U}}^{n}\rangle-\tfrac{1}{2}\text{tr}[(\sigma\sigma')(y_{n})N_{n}]-f(y_{n})\geqslant0.
\]
By subtracting the two previous inequalities, we obtain
\begin{align}
\label{E:Ishii}
\rho(\tilde{V}(t_{n},x_{n})-\tilde{U}(t_{n},y_{n}))\leqslant&\,p_{\tilde{V}}^{n}-p_{\tilde{U}}^{n}
+\langle b(x_{n}),q_{\tilde{V}}^{n}\rangle-\langle b(y_{n}),q_{\tilde{U}}^{n}\rangle+ \\
&+\tfrac{1}{2}\text{tr}[(\sigma\sigma')(x_{n})M_{n}]-\tfrac{1}{2}\text{tr}[(\sigma\sigma')(y_{n})N_{n}]+ \notag \\
&+f(x_{n})-f(y_{n}). \notag
\end{align}
When $n\rightarrow\infty$,
\[
p_{\tilde{V}}^{n}-p_{\tilde{U}}^{n}=2(t_{n}-t_{0})\rightarrow0,
\]
Moreover, from the Lipschitzianity of $b$ and (ii),
\[
\lim_{n\rightarrow\infty}\big(\langle b(x_{n}),q_{\tilde{V}}^{n}\rangle-\langle b(y_{n}),q_{\tilde{U}}^{n}\rangle\big)=0
\]
Finally, from the Lipschitzianity of $\sigma$, (i) and (ii), we get
\[
\limsup_{n\rightarrow\infty}\big(\tfrac{1}{2}\text{tr}[(\sigma\sigma')(x_{n})M_{n}]-\tfrac{1}{2}\text{tr}[(\sigma\sigma')(y_{n})N_{n}]\big)\leqslant0.
\]
Since, thanks to (iii), the left-hand side of inequality (\ref{E:Ishii}) goes to $\rho(\tilde{V}(t_{0},x_{0})-\tilde{U}(t_{0},y_{0}))$, we find
$\tilde{V}(t_{0},x_{0})-\tilde{U}(t_{0},y_{0})\leqslant0$, a contradiction with (\ref{E:Sup}).
\end{proof}

Thanks to the Comparison Theorem we deduce that the stochastic differential game admits a value, as stated in the following corollary.
\begin{corollary}
Under assumptions \textup{\textbf{(H$_{b,\sigma}$)}}, \textup{\textbf{(H$_{f,g}$)}} and \textup{\textbf{(H$_{c,\chi}$)}} the lower and upper value functions coincide and the value function of the stochastic differential game is given by $V(t,x):=V^{-}(t,x)=V^{+}(t,x)$, for every $(t,x)\in[0,T)\times\mathbb{R}^{n}$.
\end{corollary}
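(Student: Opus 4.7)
The strategy is simply to combine the existence theorem with the comparison principle. First, Theorem \ref{T:Exist} tells us that both $V^{-}$ and $V^{+}$ are viscosity solutions to the HJBI equation (\ref{E:HJBI}). In order to feed them into the Comparison Theorem (Theorem \ref{T:Uniq}), we need to verify the uniform continuity hypothesis on $[0,T)\times\mathbb{R}^{n}$. This is immediate from Proposition \ref{P:Cont-x} (Lipschitz continuity in the state variable, uniformly in time) together with Proposition \ref{P:Cont-t} ($1/2$-H\"older continuity in time, uniformly in the state variable); boundedness is also guaranteed by Proposition \ref{P:VBounded}.

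Next I would apply the Comparison Theorem twice. Taking $V := V^{-}$ as the subsolution and $U := V^{+}$ as the supersolution yields $V^{-}\leqslant V^{+}$ on $[0,T]\times\mathbb{R}^{n}$. Reversing the roles, namely taking $V := V^{+}$ as subsolution and $U := V^{-}$ as supersolution, yields the opposite inequality $V^{+}\leqslant V^{-}$. Combining these gives $V^{-}=V^{+}$ on $[0,T]\times\mathbb{R}^{n}$, and in particular on $[0,T)\times\mathbb{R}^{n}$, which is precisely the statement that the game admits a value.

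There is no real obstacle here beyond checking that the hypotheses of Theorem \ref{T:Uniq} are met: the key point is that the HJBI equation associated to the game is the \emph{same} for both value functions (a consequence, as emphasised in the introduction, of the convention that the two players cannot act simultaneously on the system), so that the comparison principle can indeed be applied symmetrically to the pair $(V^{-},V^{+})$. The only subtle point worth recording is that the uniform continuity from Propositions \ref{P:Cont-x}--\ref{P:Cont-t} is stated on $[0,T)\times\mathbb{R}^{n}$; on the closed slab $[0,T]\times\mathbb{R}^{n}$ one works with the continuous extension provided by Remark \ref{R:ContExt}, which agrees with the lower and upper semicontinuous envelopes used in the definition of viscosity solution (Remark \ref{V*}). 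With these observations in place the proof reduces to a one-line invocation of Theorems \ref{T:Exist} and \ref{T:Uniq}.
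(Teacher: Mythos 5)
Your argument is correct and is essentially the paper's own proof: both rely on Theorem \ref{T:Exist} for the viscosity-solution property, on Propositions \ref{P:Cont-x} and \ref{P:Cont-t} for the uniform continuity needed in the Comparison Theorem, and then on applying Theorem \ref{T:Uniq} in both directions to conclude $V^{-}=V^{+}$. Your additional remarks about the single HJBI equation and the continuous extension at $t=T$ are accurate but not needed beyond what the paper already records in Remarks \ref{R:ContExt} and \ref{V*}.
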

\begin{proof}
We know from Proposition \ref{P:Cont-x} and Proposition \ref{P:Cont-t} that the two value functions are uniformly continuous on $[0,T)\times\mathbb{R}^{n}$. Furthermore, thanks to Theorem~\ref{T:Exist}, both $V^{-}$ and $V^{+}$ are viscosity solutions to the HJBI equation~(\ref{E:HJBI}). Hence, from the Comparison Theorem, we deduce the thesis.
\end{proof}

\paragraph{Acknowledgements.}
The author would like to take this opportunity to thank Professor Marco Fuhrman and Professor Huy\^en Pham for helpful discussions and suggestions related to this work. The author would also like to thank Professor Martino Bardi, Professor Pierre Cardaliaguet and Professor Panagiotis E. Souganidis for useful remarks.

\end{document}